\documentclass[11pt]{amsart}
\usepackage[utf8]{inputenc}
\usepackage{fontenc}
\usepackage{amsfonts}
\usepackage{amssymb}
\usepackage{amsmath}
\usepackage{amsthm}\usepackage{mathtools}
\usepackage{enumerate}
\usepackage{hyperref}\usepackage{enumitem}
\usepackage{mathrsfs}
\usepackage{tikz}\usetikzlibrary{calc}
\usepackage{marginnote}
\usepackage{xcolor,enumitem}
\usepackage{soul}\usepackage{tikz}
\usepackage[all,cmtip]{xy}


\newcommand{\mathscripty}{\mathscr}


\newcommand{\e}{\varepsilon}

\newcommand{\Z}{\mathbb{Z}}

\newcommand{\N}{\mathbb{N}}
\newcommand{\C}{\mathbb{C}}

\newcommand{\eps}{\varepsilon}

\theoremstyle{Case1}

\theoremstyle{Case2}



\newcommand{\NN}{\mathbb{N}}

\newcommand{\lproe}{\mathrm{B}^p_u}
\newcommand{\loneroe}{\mathrm{B}^1_u}

\newcommand{\cstu}{\mathrm{C}^*_u}

\newtheorem*{rigprob*}{Rigidity Problem for uniform Roe Algebras}
\newtheorem*{rigprobcorona*}{Rigidity Problem for uniform Roe Coronas}


\newcommand{\SI}{\mathscripty{I}}
\newcommand{\SJ}{\mathscripty{J}}

\newcommand{\cstar}{$\mathrm{C}^*$}



\newcommand{\cP}{\mathcal{P}}

\newcommand{\cB}{\mathcal{B}}
\newcommand{\cK}{\mathcal{K}}





\newtheorem{theorem}{Theorem}[section]
\newtheorem*{theorem*}{Theorem}
\newtheorem{proposition}[theorem]{Proposition}

\newtheorem*{proposition*}{Proposition}
\newtheorem{lemma}[theorem]{Lemma}
\newtheorem*{lemma*}{Lemma}
\newtheorem{corollary}[theorem]{Corollary}
\newtheorem*{corollary*}{Corollar}

\newtheorem*{fact*}{Fact}
\theoremstyle{definition}
\newtheorem{definition}[theorem]{Definition}
\newtheorem*{definition*}{Definition}
\newtheorem{claim}[theorem]{Claim}
\newtheorem*{claim*}{Claim}

\newtheorem*{conjecture*}{Conjecture}

\theoremstyle{remark}

\newtheorem*{example*}{Example}

\newtheorem*{remark*}{Remark}

\newtheorem*{note*}{Note}
\newtheorem*{question*}{Question}


\newcommand{\norm}[1]{\left\lVert #1 \right\rVert}



\DeclareMathOperator{\propg}{prop}

\DeclareMathOperator{\rank}{rank}

\DeclareMathOperator{\diam}{diam}

\newcounter{my_enumerate_counter}
\newcommand{\pushcounter}{\setcounter{my_enumerate_counter}{\value{enumi}}}
\newcommand{\popcounter}{\setcounter{enumi}{\value{my_enumerate_counter}}}

\usepackage{enumitem}

\begin{document}

\title[Embeddings of $\ell_p$ Roe algebras]{On the uniform Roe algebra as a Banach algebra and embeddings of $\ell_p$ uniform Roe algebras}%

\author[B. M. Braga]{Bruno M. Braga}
\address[B. M. Braga]{Department of Mathematics and Statistics,
York University,
4700 Keele Street,
Toronto, Ontario, Canada, M3J
1P3}
\email{demendoncabraga@gmail.com}
\urladdr{https://sites.google.com/site/demendoncabraga}

\author[A. Vignati]{Alessandro Vignati}
\address[A. Vignati]{ Department of Mathematics, KU Leuven, Celestijnenlaan 200B, B-3001 Leuven, Belgium
}
\email{ale.vignati@gmail.com}
\urladdr{http://www.automorph.net/avignati}

\subjclass[2010]{}
\keywords{}
\thanks{}
\date{\today}%
\maketitle

\begin{abstract}
We work on $\ell_p$ uniform Roe algebras associated to metric spaces, and on their mutual embedding. We generalize results of I. Farah and the authors to mutual embeddings of uniform Roe algebras of operators on $\ell_p$ spaces. Simultaneously, we obtain rigidity results for the classic  uniform Roe \cstar-algebras which depend only on their   Banach algebra structure.
\end{abstract}

\setcounter{tocdepth}{1}

\section{Introduction}
Given a metric space $(X,d)$, one defines its uniform Roe \cstar-algebra $\cstu(X)$ as the subalgebra of $\mathcal B(\ell_2(X))$ given by the closure of the algebra of operators having finite propagation with respect to the distance $d$ (we refer the reader to Section \ref{section:preliminaries} for precise definitions). These algebras, introduced by J. Roe (see \cite{RoeBook}), are capable of encoding in algebraic terms some of the large scale geometric properties of $X$.  Their study was boosted in the last two decades, motivated by their connections with the Baum-Connes and the Novikov conjectures (\cite{Yu2000,HigsonRoe1995}), and therefore with geometric group theory, coarse geometry (\cite{GuentnerKaminker2004,HigsonRoe1995,Liliao2017,Ozawa2000,SkandalisTuYu2002}) and, recently, the study of topological phases of matter (\cite{Kubota2017}). 

In this paper, we study the generalization of uniform Roe \cstar-algebras to algebras of operators on $\ell_p$, for $p\in[1,\infty)$, and their mutual embeddings, by generalizing to the case $p\neq 2$ the results obtained for uniform Roe \cstar-algebras in \cite{BragaFarahVignati2019}. Since we only treat such objects from a Banach algebraic point of view, as a consequence we are able to remove certain hypotheses from results of I. Farah and the authors (\cite{BragaFarah2018} and \cite{BragaFarahVignati2019}), J. Spakula and R. Willett (\cite{SpakulaWillett2013AdvMath}), and Y. Chung and K. Li (\cite{ChungLi2018}).

The interest on the $\ell_p$ version of the uniform Roe \cstar-algebra arose in the work of J. Spakula and R. Willett, who in \cite{SpakulaWillett2017} introduced the $\ell_p$ uniform Roe algebra of $X$, denoted $\lproe(X)$, in connection with criteria for Fredholmness. Later, Y. Chung and K. Li  studied in \cite{ChungLi2018} the problem on how bijective coarse equivalence of metric spaces is related to isometric isomorphism of $\ell_p$ uniform Roe algebras.  They generalized results obtained for uniform Roe \cstar-algebras to the case $p\neq 2$: using Lamperti's Theorem \cite[Theorem 3.1]{Lamperti1958} (or \cite[Proposition 2.3]{ChungLi2018}) on the classification of isometries of $\ell_p$ with $p\neq 2$, they prove that $\lproe(X)$ and $\lproe(Y)$ are isometrically isomorphic if and only if   $X$ and $Y$ are bijectively coarsely equivalent (\cite[Theorem~1.7]{ChungLi2018}). This is not yet known to be true in case $p=2$ without technical assumptions on the spaces. 

Our goal is to study mutual embeddings of $\ell_p$ uniform Roe algebras, in parallel with the work in the \cstar-setting carried on in~\cite{BragaFarahVignati2019}.
Differently from the isometric isomorphism  case studied in~\cite{ChungLi2018}, the geometric assumption present in the \cstar-case cannot be removed, as there are partial isometries on $\ell_p(\NN)$ which are not merely given by a permutation of $\NN$ together with a change of signs.  Hence, the assumption that all ghost idempotents in $\lproe(X)$ are compact will be necessary for our results.\footnote{For the definition of ghost operator, see~Definition~\ref{def:ghost}.} (This condition is fulfilled for instance if $X$ has Yu's property A, see Proposition \ref{PropPropAImpliesGhostsComp}.)

We now describe the main theorems of this paper and their corollaries. We would like to emphasis that all the results bellow holds for $p=2$ and are new even for $p=2$. The following should be compared with \cite[Theorem 1.2]{BragaFarahVignati2019}.

\begin{theorem}\label{thm:generalembedding}
Let $p\in [1,\infty)$, and $X$ and $Y$ be uniformly locally finite metric spaces. Assume that all ghost idempotents    in  $\lproe(Y)$  are compact. If  there exists an injective compact preserving bounded homomorphism  of $\lproe(X)$   into $\lproe(Y)$, then   there are a finite partition of $X$,  $X=\bigsqcup_{n=1}^k X_n$, and injective coarse maps $X_n\to Y$.
 \end{theorem}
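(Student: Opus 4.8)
The plan is to analyse the images under $\Phi$ of the rank‑one idempotents of $\lproe(X)$, to distil from them a bornologous, boundedly finite‑to‑one assignment of uniformly bounded subsets of $Y$ to points of $X$, and then to split this assignment into finitely many honest injective coarse maps. Write $e_{yx}\in\lproe(X)$ for the matrix unit $\delta_x\mapsto\delta_y$ (vanishing on $\delta_z$ for $z\neq x$), which has propagation $d(x,y)$, and put $q_x:=\Phi(e_{xx})$. Since $\Phi$ preserves compacts, each $q_x$ is a compact idempotent, hence of finite rank, and $q_x\neq 0$ because $\Phi$ is injective; since $e_{xx}e_{x'x'}=0$ for $x\neq x'$, the $q_x$ are mutually orthogonal. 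Their ranks are all equal: for $x,x'\in X$ the identities $e_{x'x}e_{xx}e_{xx'}=e_{x'x'}$ and $e_{xx'}e_{x'x'}e_{x'x}=e_{xx}$ in $\lproe(X)$ give $\Phi(e_{x'x})q_x\Phi(e_{xx'})=q_{x'}$ and $\Phi(e_{xx'})q_{x'}\Phi(e_{x'x})=q_x$, so $\rank q_{x'}=\rank q_x=:k\ge 1$; moreover $Q_F:=\sum_{x\in F}q_x=\Phi\!\left(\sum_{x\in F}e_{xx}\right)$ is a rank‑$k|F|$ idempotent of norm at most $\|\Phi\|$ for every finite $F\subseteq X$.

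The crux is a uniform localization statement, adapting the main technical work of \cite{BragaFarahVignati2019} to the $\ell_p$ Banach‑algebra setting: there is a constant $N$ and, for each $\eps>0$, a radius $R=R(\eps)<\infty$ such that every $q_x$ satisfies $\|q_x-\chi_{A_x}q_x\chi_{A_x}\|<\eps$ for some $A_x\subseteq Y$ that is a union of at most $N$ balls of radius $R$ (here $\chi_A$ is the norm‑one idempotent of $\lproe(Y)$ onto $\ell_p(A)$), and such that the ball‑centres move by at most $g(S)$ when $x$ is moved along a partial translation of $X$ of distortion at most $S$. The inputs are: $q_x$ is compact, hence a ghost, so its matrix entries vanish off a finite set; $q_x\in\lproe(Y)$, so it is a norm‑limit of finite‑propagation operators; the uniform bound $\|Q_F\|\le\|\Phi\|$; and, to force uniformity in $x$, the standing hypothesis that ghost idempotents of $\lproe(Y)$ are compact. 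Indeed, were localization to fail for a fixed $\eps$ along points $x_n$ whose required radius tends to infinity, I would thin the $x_n$ out using uniform local finiteness of $X$, transport them by partial translations, and build from their runaway tails an element of $\lproe(Y)$ --- of the form $\Phi$ of a diagonal projection of $\lproe(X)$ minus a localized summand --- that is a ghost of infinite rank, and which after a small perturbation yields a non‑compact ghost idempotent in $\lproe(Y)$, a contradiction. I expect this step to be the principal obstacle; the rest is bookkeeping.

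Finally I would fix $\eps$ small (depending only on $k$, $N$, $\|\Phi\|$) and $R=R(\eps)$, and, merging balls that lie close together, attach to each $x$ a set $C_x\subseteq Y$ with $|C_x|\le N$ and points pairwise at distance $>2R$, such that $q_x$ is $\eps$‑close to an operator supported in $\bigcup_{y\in C_x}B(y,R)$ and such that $x\mapsto C_x$ is bornologous (via the partial translations of the previous paragraph). Mutual orthogonality of the $q_x$, the rank bound, and uniform local finiteness of $Y$ then show that any fixed ball of $Y$ contains a point of $C_x$ for only boundedly many $x$: the $\eps$‑approximate restrictions of those $q_x$ to that region form an approximately orthogonal family of approximate idempotents inside a matrix algebra of bounded size. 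Hence $x\mapsto C_x$ is boundedly finite‑to‑one. Now I invoke the combinatorial splitting principle of \cite{BragaFarahVignati2019}: a bornologous, boundedly finite‑to‑one assignment $x\mapsto C_x\in[Y]^{\le N}$ restricts, after partitioning $X$ into finitely many pieces, to a finite family of maps $x\mapsto$ (a distinguished point of $C_x$); refining the partition once more (a boundedly finite‑to‑one map decomposes into finitely many injective maps) makes each resulting $f_n\colon X_n\to Y$ injective. Each $f_n$ is bornologous, since moving $x$ by $S$ moves $f_n(x)$ by at most $g(S)$ plus the merging radius, and metrically proper, since a bounded subset of $Y$ meets $C_x$ for only boundedly many $x$. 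This produces the desired finite partition $X=\bigsqcup_{n=1}^{k'}X_n$ together with injective coarse maps $X_n\to Y$.
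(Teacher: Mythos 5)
Your proof hinges on the ``uniform localization statement,'' and as written that step does not go through; unfortunately it is exactly where the content of the theorem lies. Two concrete problems. First, the uniformity you posit --- a single radius $R(\eps)$, a single bound $N$, and above all the control function $g(S)$ for how the localizing balls move under partial translations --- is a coarse-likeness statement about $\Phi$. For a merely bounded injective homomorphism this is not automatic: in the paper it is only obtained after replacing $\Phi$ by the compression $a\mapsto P\Phi(a)P$ with $P=\mathrm{SOT}\text{-}\lim_n\Phi(\chi_{X_n})$, proving this compression is strongly continuous and that $P\in\lproe(Y)$ (Theorem \ref{ThmEmbImpliesStrongEmb}, which in the compact-preserving case runs through a ccc/Fin ideal lifting argument), and then invoking Proposition \ref{PropCoarseLike}. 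Your proposal never addresses strong continuity, and no pointwise compactness of the $q_x$ can substitute for it. Second, your contradiction sketch cannot invoke the hypothesis as stated: the assumption concerns ghost \emph{idempotents}, while the element you build (``$\Phi$ of a diagonal projection minus a localized summand'') is not an idempotent --- its defect of idempotency is not small, so no ``small perturbation'' produces a non-compact ghost idempotent --- and it is not even clearly a ghost, since the subtracted localized pieces have matrix entries as large as $\|\Phi\|$. Compare Lemma \ref{LemmaPickMapf}: the paper contradicts a much weaker failure (no matrix entry of $\Phi(e_{xx})$ of size $\ge\delta$, uniformly in $x$), precisely because then the witness $\Phi(\sum_n e_{x_nx_n})$ is \emph{already} an idempotent and is a non-compact ghost.

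The endgame also leans on a ``combinatorial splitting principle'' attributed to \cite{BragaFarahVignati2019} (bornologous, boundedly finite-to-one multi-maps split after a finite partition into bornologous injective selections); no such statement appears there, and in both that paper and this one the finite partition arises differently: one fixes $x_0$, notes that the vectors $\Phi(e_{xx_0})\delta_{f(x)}$ lie in a compact subset $D$ of the finite-dimensional space $\mathrm{Im}(\Phi(e_{x_0x_0}))$, covers $D$ by finitely many sets of diameter $<\delta/(2\|\Phi\|)$, and shows that on each resulting piece $\|e_{g(x_2)g(x_2)}\Phi(e_{x_1x_2})e_{f(x_1)f(x_1)}\|\ge\delta/2$, so that coarse-likeness applied to $\Phi(e_{x_1x_2})$ bounds $\partial(f(x_1),f(x_2))$ when $d(x_1,x_2)<r$. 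Two further points: the conclusion only asks that $f\restriction X_n$ be coarse (bornologous) and injective, not proper, so your effort towards metric properness is not needed; and the uniform finite-to-oneness you use for injectivity requires a uniform lower bound on the mass of $q_x$ at the selected point (again Lemma \ref{LemmaPickMapf}) together with the cotype argument of Lemma \ref{LemmaCloseAndUnfFiniteToOne}\eqref{ItemCloseAndUnfFiniteToOne2}; your ``approximately orthogonal approximate idempotents in a matrix algebra'' sketch does not supply that lower bound, since a point of $C_x$ may carry arbitrarily little of $q_x$.
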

 
The hypothesis on the map $\lproe(X)\to \lproe(Y)$ in Theorem \ref{thm:generalembedding} are fullfilled for instance if $\lproe(X)$ isomorphically  embeds into $\lproe(Y)$ by a  compact preserving map. Also, notice that one cannot hope to obtain in Theorem \ref{thm:generalembedding}  that $X$ coarsely embeds into $Y$. Indeed, it is easy to see that $\lproe(\Z)$ embeds into $\lproe(\N)$ by a compact preserving map, but $\Z$ does not coarsely embed into $\N$ (see \cite[Proposition 2.4]{BragaFarahVignati2019}). However the conclusion of Theorem \ref{thm:generalembedding} is enough to guarantee that if $Y$ has   finite asymptotic dimension,  finite decomposition complexity (FDC), \emph{or} property A, then $X$ has the same respective property  (\cite[Proposition 2.5,  Corollary 5.8 and Proof of Corollary 1.3(ii)]{BragaFarahVignati2019}). 

The next step is to deal with embeddings whose ranges are hereditary\footnote{If $A$ is a Banach algebra, $B\subseteq A$ is hereditary if $BAB\subseteq B$.}.

\begin{theorem}\label{ThmEmbHereditarySubAlg}
Let $p\in [1,\infty)$, and $X$ and $Y$ be uniformly locally finite metric spaces. Assume that all ghost idempotents in $\lproe(Y)$ are  compact. If there exists an isomorphic   embedding of $\lproe(X)$ onto a hereditary Banach subalgebra of $\lproe(Y)$, then $X$ coarsely embeds into $Y$.
\end{theorem}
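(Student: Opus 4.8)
\emph{Proof plan.} The strategy is to reduce to the hypotheses of Theorem~\ref{thm:generalembedding} and then exploit that a \emph{unital} hereditary subalgebra is a corner in order to upgrade its conclusion — a finite family of partial coarse embeddings — to a genuine coarse embedding of $X$.

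First I would show that $\Phi$, the given embedding, is automatically compact‑preserving and has image a corner. Write $B=\Phi(\lproe(X))$. Since $\lproe(X)$ is unital, $q:=\Phi(1)$ is an idempotent acting as a unit on $B$, so $B=qBq\subseteq q\lproe(Y)q$, while heredity gives $q\lproe(Y)q\subseteq B\lproe(Y)B\subseteq B$, whence $B=q\lproe(Y)q$. For $x\in X$ put $r=\Phi(e_{xx})$, where $e_{xx}$ is the coordinate idempotent at $x$; then $rq=qr=r$, so $r\lproe(Y)r=rBr=\Phi\big(e_{xx}\lproe(X)e_{xx}\big)=\bbC r$ because $e_{xx}\lproe(X)e_{xx}=\bbC e_{xx}$. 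A short computation with the coordinate idempotents $e_{yy}$, $y\in Y$, shows that a nonzero idempotent $r\in\lproe(Y)$ with $r\lproe(Y)r=\bbC r$ has rank one; hence each $\Phi(e_{xx})$ is finite rank. As every finitely supported operator on $\ell_p(X)$ is a finite sum of operators $e_{xx}Te_{x'x'}$, and $\Phi(e_{xx}Te_{x'x'})$ is then finite rank, continuity gives $\Phi(\cK(\ell_p(X)))\subseteq\cK(\ell_p(Y))$. So Theorem~\ref{thm:generalembedding} applies and yields a finite partition $X=\bigsqcup_{n=1}^{k}X_n$ and injective coarse maps $f_n\colon X_n\to Y$.

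Next I would assemble the $f_n$ into one map $f\colon X\to Y$ and check it is a coarse embedding. For this I would extract from the proof of Theorem~\ref{thm:generalembedding} an \emph{approximate spatiality} statement: after deleting a finite subset of $X$, each $\Phi(e_{xx})$ lies within arbitrarily small norm, modulo $\cK(\ell_p(Y))$, of $e_{f_n(x)f_n(x)}$ for $x\in X_n$, and $\Phi$ sends operators of propagation $\le R$ to within $\cK(\ell_p(Y))$ of operators of propagation $\le S(R)$. Using $e_{xx'}=e_{xx}e_{xx'}e_{x'x'}$ and applying $\Phi$, the propagation estimate makes $f=\bigcup_n f_n$ bornologous; each $f_n$ is a coarse embedding, so only the \emph{expanding} property of $f$ remains. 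Suppose it fails: there are $x_i\in X_{n_i}$, $x_i'\in X_{m_i}$ with $d_X(x_i,x_i')\to\infty$ but $d_Y(f(x_i),f(x_i'))\le S$. Since there are finitely many pairs $(n_i,m_i)$ we may take them constant, equal to $(n,m)$, and $n\neq m$ because $f_n$ is a coarse embedding. Passing to a subsequence ($f$ is at most $k$‑to‑one and $Y$ is uniformly locally finite), assume $\{f(x_i)\}$ pairwise distinct, $\{f(x_i')\}$ pairwise distinct, and the two sets disjoint; then $w:=\sum_i e_{f(x_i)f(x_i')}$ is a norm‑one element of $\lproe(Y)$ of propagation $\le S$. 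Because $\Phi(e_{x_ix_i})\le q$, the idempotent $q$ acts as the identity near each $f(x_i)$ and each $f(x_i')$ up to arbitrarily small error, so $qwq$ is norm‑close to $w$ and lies in $q\lproe(Y)q=B$; hence $s:=\Phi^{-1}(qwq)\in\lproe(X)$ with $\|s\|\le\|\Phi^{-1}\|$. Reading off the matrix entries of $s$ through approximate spatiality forces $s$ to be, modulo $\cK(\ell_p(X))$ and arbitrarily small norm, a partial translation $\sum_i\lambda_i e_{x_ix_i'}$ with the $|\lambda_i|$ bounded away from $0$ and $\infty$. But any partial translation in $\lproe(X)$ (and $\cK(\ell_p(X))\subseteq\lproe(X)$) has uniformly bounded displacement — approximating it in norm by an operator of finite propagation $R$ shows $d_X(x_i,x_i')\le R$ for every $i$ with $\lambda_i\neq 0$ — contradicting $d_X(x_i,x_i')\to\infty$. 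Thus $f$ is expanding, hence a coarse embedding of $X$ into $Y$.

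I expect the main obstacle to be precisely the last step: making the ``approximate spatiality'' of $\Phi$ quantitative enough, with errors controlled \emph{uniformly} over the coordinates, that it survives the passage to the infinite operators $w$ and $s$, and organizing the folding sequence so that the relevant partial‑translation operators are simultaneously honest, norm‑controlled elements of $\lproe(Y)$ and of $\lproe(X)$. (A conceptually cleaner route would be to show first that $q$ is, up to such perturbations, a coordinate projection $1_Z$ with $Z\subseteq Y$, so that $\lproe(X)\cong 1_Z\lproe(Y)1_Z\cong\lproe(Z)$ and one finishes by the same spatial analysis applied to this isomorphism; this reorganizes, but does not remove, the quantitative difficulty.) By contrast, the reduction to a corner and compact preservation in the first step are soft.
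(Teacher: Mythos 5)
Your first step is sound and essentially matches the paper: heredity plus unitality makes the image the corner $\Phi(1)\lproe(Y)\Phi(1)$, each $\Phi(e_{xx})$ is a rank-one idempotent, and $\Phi$ is compact preserving (this is the start of Lemma~\ref{LemmaPhiStronglyContAndU}, which in fact gives more: a spatial implementation $\Phi(a)=UaU^{-1}\Phi(1)$ for a bounded isomorphism $U$ onto $\mathrm{Im}(\Phi(1))$). The genuine gap is the ``approximate spatiality'' statement on which your entire second step rests. Nothing in the proof of Theorem~\ref{thm:generalembedding} (or anywhere in the paper) says that $\Phi(e_{xx})$ is, uniformly in $x$, within small norm of the matrix unit $e_{f(x)f(x)}$, and this is false in general: since $\Phi$ is only a bounded (not isometric) embedding, $U\delta_x$ can be spread over several coordinates near $f(x)$ with weights bounded away from $0$, so the rank-one idempotent $\Phi(e_{xx})=Ue_{xx}U^{-1}\Phi(1)$ stays at a definite distance from every scalar multiple of a matrix unit. (As stated, ``modulo $\cK(\ell_p(Y))$'' the claim is vacuous, since both operators are compact; the uniform version you actually need for the infinite sums $w$ and $s$ is exactly what is unavailable.) All the paper gives you is the lower bound $\|e_{g(x)g(x)}\Phi(e_{xx})e_{f(x)f(x)}\|\geq\delta$ from Lemma~\ref{LemmaPickMapf}, and both the coarseness and the expansion arguments must be run from that alone. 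Consequently your gluing step is also unsupported: Theorem~\ref{thm:generalembedding} only gives coarseness of $f$ on each piece $X_n$, and bornologousness of the assembled map across pieces is exactly as hard as proving coarseness of $f$ on all of $X$, which you again delegate to the unjustified spatiality.

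The paper closes precisely this gap by exploiting the rank-one structure exactly rather than approximately: from $\Phi(a)=UaU^{-1}\Phi(1)$ one gets the product formula $\|e_{y^2y^2}\Phi(e_{x^1x^2})e_{y^1y^1}\|=\|e_{y^2y^2}\Phi(e_{x^2x^2})\|\cdot\|\Phi(e_{x^1x^1})e_{y^1y^1}\|$ (Claim~\ref{claim:rank1}). Combined with coarse-likeness this yields coarseness of $f$ on all of $X$ (Lemma~\ref{LemmaTheMapsAreCoarse}), with no partition and no gluing. For expansion, the paper's Lemma~\ref{LemmaTheMapsAreExpanding} has the same skeleton as your last step — use heredity to pull back the partial translation $\Phi(1)\bigl(\sum_n e_{y^1_ny^2_n}\bigr)\Phi(1)$ to some $a\in\lproe(X)$ and contradict finite-propagation approximability since $d(x^1_n,x^2_n)\to\infty$ — but the lower bound $\|e_{x^2_nx^2_n}ae_{x^1_nx^1_n}\|\geq\delta^2/(2\|\Phi\|)$ is extracted algebraically, by sandwiching with $\Phi(e_{x^2_nx^2_n})$ and $\Phi(e_{x^1_nx^1_n})$, using Claim~\ref{claim:rank1} for the diagonal term and Lemma~\ref{L:AA} (after passing to a subsequence) to kill the cross terms, rather than by reading off matrix entries through a spatial approximation. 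If you replace your ``approximate spatiality'' by these two exact tools, your outline collapses onto the paper's proof; as written, the key quantitative step you yourself flag as the main obstacle is missing and cannot be supplied in the form you propose.
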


Our results hold in a Banach algebraic setting, and   we do not require our maps to be isometric. We can therefore forget the structure of $^*$-algebra when dealing with uniform Roe \cstar-algebras, and also eliminate the condition requiring isometric isomorphisms in \cite[Theorem 1.7]{ChungLi2018}.

\begin{corollary}\label{cor:onlybanach}
Let $p\in[1,\infty)$, and $X$ and $Y$ be uniformly locally finite metric spaces. Assume that  all ghost idempotents  in $\lproe(X)$ and $\lproe(Y)$ are compact. If   $\lproe(X)$ and   $\lproe(Y)$ are isomorphic as Banach algebras, then $X$ is coarsely equivalent to  $Y$. 
\end{corollary}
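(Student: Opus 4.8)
\textbf{Proof plan for Corollary~\ref{cor:onlybanach}.}
The plan is to deduce the corollary from Theorem~\ref{ThmEmbHereditarySubAlg} applied in both directions, together with a Cantor--Schr\"oder--Bernstein-type argument for coarse embeddings between uniformly locally finite metric spaces. Suppose $\Phi\colon \lproe(X)\to\lproe(Y)$ is an isomorphism of Banach algebras. The first point to check is that $\Phi$ and $\Phi^{-1}$ are compact preserving: this should follow from the fact that the ideal of compact operators is, in each $\lproe(\cdot)$, intrinsically characterized in Banach-algebraic terms (e.g.\ as the unique minimal closed two-sided ideal, or as the closed ideal generated by the minimal idempotents), using here the hypothesis that all ghost idempotents are compact so that there are no ``extra'' small ideals to worry about. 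Once that is known, $\Phi$ is an isomorphic embedding of $\lproe(X)$ \emph{onto} $\lproe(Y)$ itself, which is trivially a hereditary Banach subalgebra of $\lproe(Y)$; thus Theorem~\ref{ThmEmbHereditarySubAlg} gives a coarse embedding $X\to Y$. Symmetrically, since $\Phi^{-1}$ is also an isomorphism with compact-preserving inverse and $\lproe(X)$ is hereditary in itself, we obtain a coarse embedding $Y\to X$.

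The second half of the argument is purely coarse-geometric: from coarse embeddings $f\colon X\to Y$ and $g\colon Y\to X$, together with the uniform local finiteness of both spaces, one concludes that $X$ and $Y$ are coarsely equivalent. This is a standard fact (a coarse Schr\"oder--Bernstein theorem); it is proved, for instance, by noting that a coarse embedding of a uniformly locally finite space is a bijection onto its image up to finite-to-one and co-bounded-image corrections, and then running the usual back-and-forth partition argument on $X$ and $Y$ to build a coarse equivalence. Alternatively one can invoke \cite[Proposition 2.5]{BragaFarahVignati2019} or the analogous statement from \cite{BragaFarahVignati2019} directly. In either case no new idea beyond what is already used in the \cstar-setting is needed.

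The main obstacle I anticipate is the first step: verifying that a mere Banach-algebra isomorphism automatically preserves the compacts. In the \cstar-world this is immediate because $\ast$-isomorphisms send the (unique) essential ideal to the essential ideal, but here $\Phi$ need not be isometric nor $\ast$-preserving. The resolution should be to give an algebraic description of $\cK(\ell_p(X))\cap\lproe(X)$ inside $\lproe(X)$ that is invariant under all bounded algebra isomorphisms---for example, identifying it with the closed two-sided ideal generated by the rank-one idempotents, and showing (using uniform local finiteness and the ghost-idempotent hypothesis) that this is the smallest nonzero closed ideal, hence preserved by $\Phi$. If such an intrinsic description is not readily available one falls back on the hypothesis of Corollary~\ref{cor:onlybanach} exactly as stated---``all ghost idempotents are compact'' on \emph{both} sides---which is precisely what rules out pathological small ideals and makes the ideal of compacts algebraically canonical; this is presumably why the corollary assumes the ghost condition for $X$ as well as $Y$, unlike Theorems~\ref{thm:generalembedding} and~\ref{ThmEmbHereditarySubAlg}.
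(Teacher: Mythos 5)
Your first step is actually the unproblematic one, and your second step contains the fatal gap. There is no ``coarse Schr\"oder--Bernstein theorem'': two u.l.f.\ spaces that coarsely embed into each other need \emph{not} be coarsely equivalent, so obtaining one coarse embedding from $\Phi$ via Theorem~\ref{ThmEmbHereditarySubAlg} and another from $\Phi^{-1}$ does not finish the proof. A counterexample: let $I_m=\{0,1,\dots,m\}\subseteq\Z$, and let $X=\bigsqcup_{k\geq 1}I_{(2k-1)!}$, $Y=\bigsqcup_{k\geq 1}I_{(2k)!}$, realized inside $\Z$ with consecutive blocks separated by gaps growing much faster than all previous diameters; both are u.l.f. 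Mapping each block of $X$ isometrically into the next larger block of $Y$ (and vice versa) gives coarse embeddings in both directions. But they are not coarsely equivalent: a coarse equivalence $h$ with coarse inverse $g$ must eventually send each block into a single block of the other space; since $g\circ h$ is close to $\mathrm{Id}_X$, the $g$-image of the target block must return coarsely densely onto the source block, while coarseness of $g$ bounds the diameter of the image of a ($1$-connected) block by a fixed multiple of its diameter --- and the factorial interleaving makes these two requirements incompatible for large blocks. (Your alternative citation of \cite[Proposition 2.5]{BragaFarahVignati2019} concerns permanence of asymptotic dimension, not any such Schr\"oder--Bernstein statement.) So the corollary cannot be reduced to abstract mutual coarse embeddability; one must use that the two embeddings come from $\Phi$ and $\Phi^{-1}$ and are inverse to each other up to closeness.

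That is exactly what the paper does. By Lemma~\ref{LemmaPhiStronglyContAndU}, applied to $\Phi$ and to $\Phi^{-1}$ (each is an isomorphism onto the whole algebra, which is trivially hereditary in itself), both maps are spatially implemented, hence strongly continuous and compact preserving --- so your anticipated obstacle about preservation of compacts never arises; no intrinsic characterization of $M^p_\infty$ is needed, and note that Theorem~\ref{ThmEmbHereditarySubAlg} does not even list compact preservation among its hypotheses. (Your fallback idea is nevertheless sound: $M^p_\infty(X)$ is the smallest nonzero closed two-sided ideal of $\lproe(X)$, so it is preserved by any bounded isomorphism; the ghost hypothesis is not what makes it canonical.) Then Lemma~\ref{LemmaPickMapf} applied to $\Phi$ and $\Phi^{-1}$ produces $\delta>0$ and maps $f\colon X\to Y$, $g\colon Y\to X$ with $\norm{\Phi(e_{xx})e_{f(x)f(x)}}>\delta$ and $\norm{\Phi^{-1}(e_{yy})e_{g(y)g(y)}}>\delta$; arguing as in Theorem~\ref{ThmEmbHereditarySubAlg} these are coarse, and the key extra step --- that $g\circ f$ and $f\circ g$ are close to the identities --- is proved directly: if $d(x_n,g(f(x_n)))\to\infty$, then Claim~\ref{claim:rank1} applied to $\Phi^{-1}$ yields the uniform lower bound $\norm{e_{x_nx_n}\Phi^{-1}(e_{y_ny_n})e_{z_nz_n}}\geq\delta^2\norm{\Phi}^{-1}$ with $y_n=f(x_n)$, $z_n=g(y_n)$, contradicting that $\Phi^{-1}$ is coarse-like (Proposition~\ref{PropCoarseLike}). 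This use of the specific pair $(f,g)$ attached to $\Phi$, rather than two unrelated coarse embeddings, is the missing idea in your proposal.
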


Under the stronger assumption of property A, we can actually improve the results above and obtain injective coarse embeddings. 
\begin{theorem}\label{ThmEmbWithPropA}
Let $p\in[1,\infty)$,  $X$ and $Y$ be uniformly locally finite metric spaces,  and assume that $Y$ has property A. The following are equivalent.
\begin{enumerate}
\item\label{Item1ThmEmbWithPropA}  $X$  coarsely embeds into    $Y$ by an injective map.
\item \label{Item2ThmEmbWithPropA}  $\lproe(X)$ isometrically embeds as a Banach algebra onto a hereditary subalgebra of     $\lproe(Y)$.
\item \label{Item3ThmEmbWithPropA} $\lproe(X)$ embeds as a Banach algebra onto a hereditary subalgebra of     $\lproe(Y)$.
\end{enumerate}
\end{theorem}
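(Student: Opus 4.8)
The plan is to prove the cycle $(\ref{Item1ThmEmbWithPropA})\Rightarrow(\ref{Item2ThmEmbWithPropA})\Rightarrow(\ref{Item3ThmEmbWithPropA})\Rightarrow(\ref{Item1ThmEmbWithPropA})$, the implication $(\ref{Item2ThmEmbWithPropA})\Rightarrow(\ref{Item3ThmEmbWithPropA})$ being vacuous and the implication $(\ref{Item1ThmEmbWithPropA})\Rightarrow(\ref{Item2ThmEmbWithPropA})$ being an explicit construction which uses neither property A nor compactness of ghost idempotents. For $(\ref{Item1ThmEmbWithPropA})\Rightarrow(\ref{Item2ThmEmbWithPropA})$, let $f\colon X\to Y$ be an injective coarse embedding, with control functions $\rho_-\le\rho_+$, so $\rho_-(d(x,x'))\le d(f(x),f(x'))\le\rho_+(d(x,x'))$ and $\rho_-(t)\to\infty$. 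Since $f$ is injective, the map $V\colon\ell_p(X)\to\ell_p(Y)$ determined by $V\delta_x=\delta_{f(x)}$ is a linear isometry onto $\ell_p(f(X))$, and, writing $P$ for the norm-one, propagation-zero coordinate projection of $\ell_p(Y)$ onto $\ell_p(f(X))$, the operator $V^*:=V^{-1}P$ satisfies $V^*V=\Id_{\ell_p(X)}$ and $VV^*=P$. Define $\Phi(a)=VaV^*$. Then $\Phi$ is a linear isometry (as $\|V\|=\|V^*\|=1$ forces $\|\Phi(a)\|\le\|a\|=\|V^*\Phi(a)V\|\le\|\Phi(a)\|$), it is multiplicative since $\Phi(a)\Phi(b)=VaV^*VbV^*=VabV^*=\Phi(ab)$, and it is injective since $V^*\Phi(a)V=a$. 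One computes $(\Phi(a))_{f(x),f(x')}=a_{x,x'}$ and $(\Phi(a))_{y,y'}=0$ unless $y,y'\in f(X)$, so if $a$ has propagation $\le s$ then $\Phi(a)$ has propagation $\le\rho_+(s)$; hence, by density of finite‑propagation operators, $\Phi(\lproe(X))\subseteq\lproe(Y)$. Conversely, for $b\in P\lproe(Y)P$ with propagation $\le r$ the operator $V^*bV$ has entries $(V^*bV)_{x,x'}=b_{f(x),f(x')}$, hence propagation $\le\sup\{t:\rho_-(t)\le r\}<\infty$, so $V^*bV\in\lproe(X)$ and $\Phi(V^*bV)=PbP=b$. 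Therefore $\Phi$ is an isometric Banach algebra isomorphism of $\lproe(X)$ onto $P\lproe(Y)P$, which is hereditary in $\lproe(Y)$ because $P\in\lproe(Y)$ implies $(P\lproe(Y)P)\lproe(Y)(P\lproe(Y)P)\subseteq P\lproe(Y)P$.

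For $(\ref{Item3ThmEmbWithPropA})\Rightarrow(\ref{Item1ThmEmbWithPropA})$, suppose $\lproe(X)$ embeds as a Banach algebra onto a hereditary subalgebra of $\lproe(Y)$. Since $Y$ has property A, Proposition \ref{PropPropAImpliesGhostsComp} guarantees that all ghost idempotents in $\lproe(Y)$ are compact, so Theorem \ref{ThmEmbHereditarySubAlg} applies and produces a coarse embedding $f\colon X\to Y$. Because $f$ is a coarse embedding and $X$ is uniformly locally finite, $f$ automatically has uniformly bounded finite fibres: if $\rho_-$ is a lower control function for $f$, then $\diam f^{-1}(y)\le\sup\{t:\rho_-(t)\le 0\}=:R$ for every $y\in Y$, whence $|f^{-1}(y)|\le\sup_{x\in X}|B_X(x,R)|<\infty$. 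What remains, and what is the genuine new content of this theorem over Corollary \ref{cor:onlybanach} and Theorem \ref{ThmEmbHereditarySubAlg}, is to promote $f$ to an \emph{injective} coarse embedding; this is exactly the point at which the full strength of property A of $Y$ is needed, rather than mere compactness of ghost idempotents.

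The plan for this last step is to use that, under property A, idempotents (and partial isometries) in $\lproe(Y)$ are rigid enough that the homomorphism $\Phi$ of $(\ref{Item3ThmEmbWithPropA})$ is, up to a controlled perturbation, spatially implemented by a Lamperti‑type partial isometry $\delta_x\mapsto\lambda_x\delta_{\sigma(x)}$ supported on a subset of $Y$; the hypothesis that the range of $\Phi$ is \emph{all} of a hereditary subalgebra (equivalently, the corner by the idempotent $\Phi(1)$) then forces the underlying map $\sigma\colon X\to Y$ to be injective, and $\sigma$ is the desired injective coarse embedding. An equivalent formulation, which is the one I expect to be most delicate to carry out, is a ``spreading'' argument: one takes the uniformly bounded fibres of $f$ and separates them by a marriage/Hall‑type matching of $X$ into $Y$, the required matching condition being furnished by the rigidity coming from property A (for instance from the near‑disjointness and approximate linear independence of the rank‑one idempotents $\Phi(\delta_x)$, which live in $\lproe(Y)$). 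I expect this injectivity upgrade to be the main obstacle: the constructions behind Theorems \ref{thm:generalembedding} and \ref{ThmEmbHereditarySubAlg} only yield finitely‑many‑to‑one coarse data, and removing that multiplicity is precisely where property A, and not just compactness of ghost idempotents, must be invoked.
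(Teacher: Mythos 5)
Your treatment of \eqref{Item1ThmEmbWithPropA}$\Rightarrow$\eqref{Item2ThmEmbWithPropA} and the trivial implication is fine and agrees with the paper's construction ($V\delta_x=\delta_{f(x)}$, $P=\chi_{\mathrm{Im}(f)}$, $a\mapsto VaV^{-1}P$). The problem is \eqref{Item3ThmEmbWithPropA}$\Rightarrow$\eqref{Item1ThmEmbWithPropA}: the only genuinely new content of this theorem is the injectivity of the coarse embedding, and at exactly that point you stop proving and start describing a plan. Invoking Theorem \ref{ThmEmbHereditarySubAlg} to get a (finitely-to-one) coarse embedding and then asserting that property A will ``promote'' it to an injective one is not an argument; neither of your two suggested routes is carried out, and the first one (that $\Phi$ is, up to perturbation, spatially implemented by a Lamperti-type partial isometry $\delta_x\mapsto\lambda_x\delta_{\sigma(x)}$) is not available here: Lemma \ref{LemmaPhiStronglyContAndU} only gives $\Phi(a)=UaU^{-1}\Phi(1)$ for an isomorphism $U$ that need not be an isometry, and in \eqref{Item3ThmEmbWithPropA} the embedding is not assumed isometric, so Lamperti's theorem simply does not apply — indeed the introduction of the paper stresses that for $p\neq 2$ general invertibles are not of permutation-plus-signs form, which is why the ghost hypothesis is needed in the first place.

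The paper's actual proof fills precisely the gap you left open. Property A is used quantitatively, via the $p$-operator norm localization property, to prove Lemma \ref{LemmaUdelta} and Lemma \ref{LemmaONL}, which combine in Lemma \ref{LemmaCardinalityXBDelta} to give the counting estimate $|A|\leq |Y_{A,\delta}|$ for all $A\subseteq X$; this is exactly the Hall marriage condition, and Lemma \ref{LemmaExistenceInjection} then produces an injection $f\colon X\to Y$ with $f(x)\in Y_{x,\delta}$ for all $x$. You gesture at ``a marriage/Hall-type matching \dots furnished by the rigidity coming from property A,'' but you never establish the matching condition, which is the hard analytic step. Moreover, even after an injective $f$ is found, one must still show it is a coarse embedding: the paper does this by splitting $X=X_1\sqcup X_2$, choosing auxiliary maps $g,h$ with $\|e_{h(x)h(x)}\Phi(e_{xx})e_{g(x)g(x)}\|\geq\delta$, applying Lemmas \ref{LemmaTheMapsAreCoarse} and \ref{LemmaTheMapsAreExpanding} as in the proof of Theorem \ref{ThmEmbHereditarySubAlg}, and then using coarse-likeness and Lemma \ref{LemmaCloseAndUnfFiniteToOne} to see that $f$ is close to $g$. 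None of this appears in your proposal, so the implication \eqref{Item3ThmEmbWithPropA}$\Rightarrow$\eqref{Item1ThmEmbWithPropA}, and hence the theorem, is not proved.
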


Property A also allow us to obtain bijective coarse equivalences. 

\begin{theorem}\label{ThmIsomWithPropA}
Let $p\in[1,\infty)$,  $X$ and $Y$ be uniformly locally finite metric spaces,  and assume that $Y$ has property A. The following are equivalent.
\begin{enumerate}
\item\label{Item1ThmIsomWithPropA}      $X$ is bijectively coarsely equivalent to  $Y$.
\item \label{Item2ThmIsomWithPropA} $\lproe(X)$ and  $\lproe(Y)$ are isometrically isomorphic as   Banach algebras.
\item \label{Item3ThmIsomWithPropA} $\lproe(X)$ and  $\lproe(Y)$ are isomorphic as    Banach algebras.
\end{enumerate}
\end{theorem}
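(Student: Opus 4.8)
The plan is to prove the cycle \eqref{Item1ThmIsomWithPropA}$\Rightarrow$\eqref{Item2ThmIsomWithPropA}$\Rightarrow$\eqref{Item3ThmIsomWithPropA}$\Rightarrow$\eqref{Item1ThmIsomWithPropA}. The middle implication is immediate, and \eqref{Item1ThmIsomWithPropA}$\Rightarrow$\eqref{Item2ThmIsomWithPropA} is the spatial direction and makes no use of property A. Given a bijection $f\colon X\to Y$ which is a coarse equivalence, the operator $U_f\colon\ell_p(X)\to\ell_p(Y)$ determined by $U_f\delta_x=\delta_{f(x)}$ is a surjective linear isometry with inverse $U_{f^{-1}}$, so $a\mapsto U_faU_f^{-1}$ is an isometric algebra isomorphism $\mathcal B(\ell_p(X))\to\mathcal B(\ell_p(Y))$. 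I would then check that it preserves finite propagation: the $(y,y')$-coefficient of $U_faU_f^{-1}$ equals the $(f^{-1}(y),f^{-1}(y'))$-coefficient of $a$, which vanishes unless $d_X(f^{-1}(y),f^{-1}(y'))\le\propg(a)$; since $f$ is controlled, this bounds $d_Y(y,y')$ in terms of $\propg(a)$. By the same argument applied to $f^{-1}$, conjugation by $U_{f^{-1}}$ also preserves finite propagation, so conjugation by $U_f$ restricts to an isometric isomorphism of $\lproe(X)$ onto $\lproe(Y)$.

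The substance is \eqref{Item3ThmIsomWithPropA}$\Rightarrow$\eqref{Item1ThmIsomWithPropA}. Let $\Phi\colon\lproe(X)\to\lproe(Y)$ be a Banach algebra isomorphism. The first step is to note that $\Phi$ is compact preserving: $\cK(\ell_p(\cdot))$ is the minimal nonzero closed two-sided ideal of the uniform Roe algebra of a uniformly locally finite space (any nonzero element of a closed ideal produces, via multiplication by the canonical rank-one idempotents, some $e_{xx}$ and hence all of $\cK$ in the ideal), so $\Phi(\cK(\ell_p(X)))=\cK(\ell_p(Y))$. Since $Y$ has property A, Proposition~\ref{PropPropAImpliesGhostsComp} gives that all ghost idempotents in $\lproe(Y)$ are compact, so $\Phi$ is an injective, compact preserving, bounded homomorphism to which Theorem~\ref{thm:generalembedding} applies: there is a finite partition $X=\bigsqcup_{n=1}^k X_n$ together with injective coarse maps $X_n\to Y$. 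Each $X_n$ thus coarsely embeds into a property A space, hence has property A; a finite disjoint union of property A spaces has property A; so $X$ has property A, and in particular all ghost idempotents in $\lproe(X)$ are compact.

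Now both $X$ and $Y$ have property A and all their ghost idempotents are compact, so Corollary~\ref{cor:onlybanach} already yields that $X$ is coarsely equivalent to $Y$. (Alternatively, viewing $\Phi$ as an isomorphism onto the trivially hereditary subalgebra $\lproe(Y)\subseteq\lproe(Y)$, implication \eqref{Item3ThmEmbWithPropA}$\Rightarrow$\eqref{Item1ThmEmbWithPropA} of Theorem~\ref{ThmEmbWithPropA} produces an injective coarse embedding $X\to Y$, and the same applied to $\Phi^{-1}$ — now using property A of $X$ — produces an injective coarse embedding $Y\to X$.) It remains to upgrade this to a \emph{bijective} coarse equivalence, and this is the step I expect to require the most care: one needs the geometric fact that a coarse equivalence (or a pair of injective coarse embeddings in both directions) between uniformly locally finite metric spaces can always be replaced by a bijective coarse equivalence, by redistributing the boundedly-many preimages near each point using uniform local finiteness — and, if one routes through Theorem~\ref{ThmEmbWithPropA} rather than Corollary~\ref{cor:onlybanach}, one must also verify that surjectivity of $\Phi$ is reflected in coarse density of the image of the coarse embedding it produces. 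Granting this, we obtain \eqref{Item1ThmIsomWithPropA}; everything else is a matter of assembling the theorems already established.
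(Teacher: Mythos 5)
Your route to a (non-bijective) coarse equivalence is fine and consistent with the paper: compact preservation of $\Phi$ does hold (though for $p=1$ you should phrase it with $M^1_\infty$ rather than $\cK(\ell_1)$, since $M^1_\infty(X)\subsetneq\cK(\ell_1(X))$ and $\cK(\ell_1(X))\not\subseteq\loneroe(X)$; your minimal-ideal argument works verbatim for $M^p_\infty$, and the paper instead gets compact preservation and strong continuity at once from the spatial form $\Phi(a)=UaU^{-1}\Phi(1)$ of Lemma~\ref{LemmaPhiStronglyContAndU}), and deducing property A of $X$ so that Corollary~\ref{cor:onlybanach} applies is legitimate. The genuine gap is the last step. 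The ``geometric fact'' you invoke --- that a coarse equivalence between u.l.f.\ spaces can always be upgraded to a bijective one by redistributing preimages --- is false. For instance, take $X=\{n^2\mid n\in\N\}\subseteq\bbR$ and $Y=X\cup\{n^2+1\mid n\in\N\}$: collapsing $n^2+1$ to $n^2$ is a coarse equivalence of u.l.f.\ spaces, but a coarse bijection $h\colon Y\to X$ would send the infinitely many pairs $\{n^2,n^2+1\}$ (at distance $1$) to pairs of \emph{distinct} points of $X$ at uniformly bounded distance, and $X$ contains only finitely many such pairs. The parenthetical fallback is no better: there is no coarse Schr\"oder--Bernstein theorem ($\N$ and $\Z$ embed coarsely and injectively into each other but are not coarsely equivalent), so the K\"onig bijection built from two \emph{unrelated} injective coarse embeddings need not be coarse or expanding. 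So ``granting this'' grants a false statement, and the implication \eqref{Item3ThmIsomWithPropA}$\Rightarrow$\eqref{Item1ThmIsomWithPropA} is not established by your argument.

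What the paper does at this point is not geometry alone but geometry tied to the single isomorphism $\Phi$. Lemma~\ref{LemmaExistenceInjection} applied to $\Phi$, and (by symmetry, once $X$ is known to have property A) to $\Phi^{-1}$, gives \emph{injections} $f\colon X\to Y$ and $g\colon Y\to X$ with uniform matrix-coefficient lower bounds: $f(x)\in Y_{x,\delta}$ and, for each $y$, some $x'$ with $\max\{\norm{e_{x'x'}\Phi^{-1}(e_{yy})e_{g(y)g(y)}},\norm{e_{g(y)g(y)}\Phi^{-1}(e_{yy})e_{x'x'}}\}\geq\delta$. K\"onig's proof of Cantor--Schr\"oder--Bernstein then produces a bijection $h$ with $h(x)\in\{f(x),g^{-1}(x)\}$ for every $x$, and the crucial point is that \emph{both} alternatives carry a lower bound on matrix coefficients of $\Phi$ itself (Claim~\ref{claim:rank1}, applied to $\Phi^{-1}$, converts the $g$-type bound into one for $\Phi$ at the cost of factors of $\norm{\Phi},\norm{\Phi^{-1}}$). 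This uniform bound is exactly what lets Lemmas~\ref{LemmaTheMapsAreCoarse}, \ref{LemmaTheMapsAreExpanding} and \ref{LemmaCloseAndUnfFiniteToOne} show that $h$ is coarse and expanding, even when $h(x_1)$ comes from $f$ and $h(x_2)$ from $g^{-1}$. To repair your proposal you should replace the appeal to Corollary~\ref{cor:onlybanach} plus a general redistribution principle by this $\Phi$-mediated Schr\"oder--Bernstein argument.
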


When $p=2$, we have the next corollary.

\begin{corollary}\label{CorIsoAsCstarIFFasBanAlg}
Let $X$ and $Y$ be uniformly locally finite metric spaces,  and assume that $Y$ has property A. Then  $\cstu(X)$ and $\cstu(Y)$ are isomorphic as Banach algebras if and only if they are isomorphic as \cstar-algebras. 
\end{corollary}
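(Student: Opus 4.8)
The plan is to deduce everything from Theorem~\ref{ThmIsomWithPropA} specialized to $p=2$, where $\lproe(X)=\cstu(X)$ and $\lproe(Y)=\cstu(Y)$, together with a standard spatial argument turning a bijective coarse equivalence into a $^*$-isomorphism. The forward implication --- if $\cstu(X)$ and $\cstu(Y)$ are $^*$-isomorphic then they are isomorphic as Banach algebras --- is trivial, since one just forgets the involution; so all the work is in the converse.

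For the converse, suppose $\cstu(X)\cong\cstu(Y)$ as Banach algebras. Since $Y$ has property~A, Theorem~\ref{ThmIsomWithPropA} (with $p=2$) applies: condition~\eqref{Item3ThmIsomWithPropA} holds, hence so does condition~\eqref{Item1ThmIsomWithPropA}, and we obtain a bijective coarse equivalence $f\colon X\to Y$. The remaining task is to manufacture from $f$ an actual $^*$-isomorphism $\cstu(X)\to\cstu(Y)$, rather than merely the isometric Banach-algebra isomorphism provided abstractly by condition~\eqref{Item2ThmIsomWithPropA}.

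For this last step I would use the usual spatial implementation. Because $f$ is a bijection, the operator $u_f\colon \ell_2(X)\to\ell_2(Y)$ determined by $\delta_x\mapsto\delta_{f(x)}$ is a unitary, and conjugation $a\mapsto u_f a u_f^{*}$ is a $^*$-isomorphism $\mathcal B(\ell_2(X))\to\mathcal B(\ell_2(Y))$. An operator $a$ of propagation at most $r$ is sent to one whose $(y,y')$-entry is $a_{f^{-1}(y),f^{-1}(y')}$, which is nonzero only when $d(f^{-1}(y),f^{-1}(y'))\le r$; since $f$ is coarse this bounds $d(y,y')$ by a constant depending only on $r$, so $u_f a u_f^{*}$ again has finite propagation, and symmetrically for $f^{-1}$ and $u_f^{*}$. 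Hence conjugation by $u_f$ restricts to a $^*$-isomorphism $\cstu(X)\to\cstu(Y)$, finishing the proof.

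I do not expect a genuine obstacle: the substantive content is already in Theorem~\ref{ThmIsomWithPropA}, and the only subtlety is that one must not simply quote the ``isometrically isomorphic as Banach algebras'' clause of that theorem --- it does not by itself yield a $^*$-isomorphism --- but instead route through the bijective coarse equivalence and build the $^*$-isomorphism by hand as above.
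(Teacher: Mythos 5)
Your proposal is correct and follows essentially the same route as the paper: apply Theorem~\ref{ThmIsomWithPropA} with $p=2$ to get a bijective coarse equivalence $f$, then implement it spatially via the unitary $\delta_x\mapsto\delta_{f(x)}$, whose conjugation preserves finite propagation in both directions and hence gives a $^*$-isomorphism $\cstu(X)\to\cstu(Y)$. The only cosmetic difference is that the paper also records, via amenability of $\cstu(X)$ as a Banach algebra, that $X$ inherits property~A, a remark your argument does not need since the theorem only requires property~A of $Y$.
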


The techniques resembles the ones in \cite{BragaFarahVignati2019}. The lack of the operation $^*$ and  of isometric maps makes our proofs  more technical, with the payoff of extending some of the results in \cite{BragaFarah2018}, \cite{BragaFarahVignati2019} and \cite{ChungLi2018}.

\subsubsection*{Acknoledgments}
Part of this paper was written while BMB visited KU Leuven, and BMB is thankful for the hospitality of KU Leuven's department of mathematics. BMB is supported by the Simons Foundation and he would like to thank Ilijas Farah for useful discussions. AV is supported by a FWO Scholarship.

\section{Preliminaries}\label{section:preliminaries}
If $A$ is a Banach algebra, an element $P\in A$ is an \emph{idempotent} if $P^2=P$. Given idempotents $P,Q\in A$, we say that $Q$ is \emph{below} $P$, and write $Q\leq P$, if $PQ=QP=Q$. ($P$ and $Q$ will denote idempotents, while $p$ and $q$ will denote numbers in $[1,\infty)$). 

Let $\mathcal B(\ell_p(X))$ be the algebra of bounded linear operators on $\ell_p(X)$ for some set $X$. In this case, if $P\in\mathcal B(\ell_p(X))$ is a rank $1$ operator, there is a vector $\xi\in\ell_p(X)$ and a functional $f\in\ell_p(X)^*$ such that $P\eta=f(\eta)\xi$ for all $\eta\in \ell_p(X)$. We will use the notation $\xi\otimes f$ for such an operator, i.e., 
\[
(\xi\otimes f)(\eta)=f(\eta)\xi.
\]
If $x\in X$, $\delta_x\in\ell_p(X)$ denotes the vector given by the characteristic function on $X$. If $x,y\in X$, $e_{xy}\in\mathcal B(\ell_p(X))$ denotes the rank one operator given by
\[
e_{xy}\delta_z=\begin{cases}\delta_y&\text{ if }x=z\\
0&\text{ else.}
\end{cases}
\]
Let $(X,d)$ be a metric space and $n\in\NN$. An operator $T\in\mathcal B(\ell_p(X))$ has propagation $\leq n$, written $\propg(T)\leq n$, if 
\[
(T\delta_x)(y)\neq 0\Rightarrow d(x,y)\leq n.
\]
This is equivalent to requiring  that
\[
\norm{e_{xx}Te_{yy}}\neq 0\Rightarrow d(x,y)\leq n.
\]
\begin{definition}\label{def:lpunifroe}
Let $p\in[1,\infty)$ and $(X,d)$ be a metric space. The $\ell_p$ \emph{uniform Roe algebra of $X$}, denoted $B_u^p(X)$, is the subalgebra of $\mathcal B(\ell_p(X))$ given by the closure of the algebra of operators of finite propagation. When $p=2$ this is known as the \emph{uniform Roe \cstar-algebra of $X$}, denoted $\cstu(X)$.
\end{definition}

Let $(X_\lambda)_\lambda$ be an increasing net of finite subsets of $X$ such that $X=\bigcup_\lambda X_\lambda$. For each $\lambda\in\Lambda$, let $M^p(X_\lambda)\subseteq\mathcal B(\ell_p(X))$ be the algebra of $\C$-valued matrices $a\in \mathcal B(\ell_p(X))$ so that $e_{xx}ae_{yy}\neq 0$ implies $x,y\in X_\lambda$. Define 
\[
M_\infty^p(X)=\overline{\bigcup_\lambda M^p(X_\lambda)}.
\] 
Note that $M^p_\infty(X)\subseteq \lproe(X)$.

When $p\in (1,\infty)$, we have $M^p_\infty(X)=\cK(\ell_p(X))$, the latter being the ideal of compact operators on $\ell_p(X)$. If $p=1$, then  $M_\infty^p(X)\subseteq\cK(\ell_p(X))$, but the inclusion is proper: fix $x_0\in X$ and consider the operator $\delta_{x_0}\otimes f$ where $f\in\ell_1(X)^*=\ell_\infty(X)$ is the vector which is $1$ at each entry. Then $\delta_{x_0}\otimes f$ has rank $1$ and therefore belongs to $\mathcal K(\ell_1(X))$, but it has distance $1$ from each $M^p(X_\lambda)$. More than that: if $X$ is an unbounded metric space, $\delta_{x_0}\otimes f$ has infinite propagation, as $\norm{e_{x_0x_0}(\delta_{x_0}\otimes f)e_{yy}}=1$ for all $y\in X$, so $\mathcal K(\ell_1(X))\nsubseteq \loneroe(X)$.
 One can easily check that
\[M^p_\infty(X)=\lproe(X)\cap\cK(\ell_2(X))\]
for all $p\in [1,\infty)$. Therefore, a  map $\Phi\colon\lproe(X)\to\lproe(Y)$ is compact preserving if and only if $\Phi[M_\infty^p(X)]\subseteq M_\infty^p(Y)$.  

A metric space $(X,d)$ is said to be \emph{uniformly locally finite}, abbreviated \emph{u.l.f.}, if 
\[
\sup_{x\in X}|\{y\mid d(x,y)\leq n\}|<\infty,\,\,\text{ for all }n\in\NN.
\]
\begin{definition}\label{def:ghost}
Let $p\in[1,\infty)$ and  $X$ be a u.l.f. metric space. An operator $a\in \lproe(X)$ is called a \emph{ghost} if for all $\eps>0$ there exists a finite $A\subset X$ such that $\|e_{yy}ae_{xx}\|<\eps$, for all $x,y\in X\setminus A$.
\end{definition}

The algebra of ghosts always contains $M_\infty^p(X)$. On the other hand, if a ghost has finite propagation, then it easily follows that it must belong to $M_\infty^p(X)$. Ghosts operators are closely related to Yu's property A (\cite{Yu2000}). Instead of introducing the original definition of property A, we use a characterization due to J. Roe and R. Willett (see \cite[Theorem 1.3]{RoeWillett2014}).

\begin{definition}
A u.l.f. metric space $X$ has \emph{property A} if all ghost operators in $\cstu(X)$ are compact. 
\end{definition}

The next proposition is essentially \cite[Proposition 11.43]{RoeBook}.

\begin{proposition}\label{PropPropAImpliesGhostsComp}
Let $X$ be a u.l.f. metric space with property A and $p\in [1,\infty)$. Then all ghosts in $\lproe(X)$ belong to $M_\infty^p(X)$.
\end{proposition}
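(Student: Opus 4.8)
The plan is to reduce the statement to the case $p=2$, where it is already known that property A implies all ghosts are compact. First I would observe that the key structural fact is the following: for a u.l.f. metric space $X$, finite-propagation operators on $\ell_p(X)$ and on $\ell_2(X)$ are ``the same'' in the sense that the matrix coefficients $\langle a\delta_x,\delta_y\rangle$ of a finite-propagation operator $a$ (supported on a single propagation-$n$ ``band'') give a uniformly bounded operator on every $\ell_q(X)$, $q\in[1,\infty]$, with norm controlled by $\sup_{x,y}\abs{\langle a\delta_x,\delta_y\rangle}$ times a constant depending only on $n$ and the local-finiteness bound of $X$ (a Schur-test type estimate, since each row and column has at most $\sup_{x}\abs{\{y:d(x,y)\le n\}}$ nonzero entries). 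Consequently there is a natural correspondence between $\lproe(X)$ and $\cstu(X)$ at the level of dense subalgebras.

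Next I would take a ghost $a\in\lproe(X)$. By Definition~\ref{def:ghost}, $a$ is a norm-limit of finite-propagation operators $a_k$, and since $\propg$-restrictions do not increase the ghost property (a finite-propagation operator $b$ with $\norm{e_{yy}be_{xx}}$ small off a finite set is already in $M^p_\infty(X)$, as noted in the excerpt right before the proposition), we may as well argue directly: the matrix entries of $a$ are bounded, and the ``ghost'' condition says precisely that for each $\e>0$ there is a finite $A\subseteq X$ controlling all entries $e_{yy}ae_{xx}$ off $A$. I would then transport $a$ to an operator $\tilde a$ on $\ell_2(X)$ by truncating to finite-propagation pieces and taking the corresponding $\ell_2$-limit: the Schur estimate guarantees the truncations form a Cauchy sequence in $\cB(\ell_2(X))$, so $\tilde a\in\cstu(X)$ is well-defined, has the same matrix coefficients as $a$, and is again a ghost (the ghost condition only involves the single-entry norms $\norm{e_{yy}\cdot e_{xx}}$, which are intrinsic to the matrix and independent of $p$). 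Since $X$ has property A, $\tilde a\in\cstu(X)$ is compact, and a compact operator whose matrix has entries tending to $0$ off finite sets and which is an $\ell_2$-limit of finite-propagation pieces actually lies in $M^2_\infty(X)=\mathcal K(\ell_2(X))$ with finite-rank truncations converging. Pulling this back: the same finite-rank matrix truncations converge to $a$ in $\lproe(X)$ by the Schur estimate, so $a\in M^p_\infty(X)$.

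I expect the main obstacle to be the careful bookkeeping in the transport step: one must check that truncating a ghost $a\in\lproe(X)$ to its finite-propagation ``bands'' $a^{(n)}$ (the part supported where $d(x,y)\le n$) yields operators that genuinely converge to $a$ in the $\ell_p$ norm --- this is clear if $a$ is itself a limit of finite-propagation operators, which it is by definition of $\lproe(X)$, but one should verify the band-truncation is a bounded operation on $\lproe(X)$ (uniformly in $n$) so that $a^{(n)}\to a$; this is where u.l.f.\ is used. Then one needs that each $a^{(n)}$, having finite propagation and bounded entries, gives a bona fide element of $\cstu(X)$ with comparable norm, and that the ghost condition transfers. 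The remaining verification --- that a compact operator in $M^2_\infty(X)$ coming from a ghost has finite-rank truncations (along finite subsets $X_\lambda$) converging to it, and that these same truncations converge to $a$ in $\ell_p$ --- is again a Schur-type estimate. A cleaner alternative, which I would actually pursue to avoid re-proving Roe--Willett, is simply to cite \cite[Proposition 11.43]{RoeBook} for the $\ell_p$ statement if it is there, or to note that the argument there is $p$-independent once phrased in terms of matrix coefficients; in either case the engine is the elementary fact that for u.l.f.\ $X$, band-limited matrices with bounded entries act boundedly and compatibly on all $\ell_p(X)$.
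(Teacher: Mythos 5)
There is a genuine gap, and it sits exactly at the step you flag as ``the main obstacle.'' Your reduction needs the band truncations $a^{(n)}$ (restriction of the matrix of $a$ to $\{(x,y):d(x,y)\le n\}$) to converge to $a$ in norm, and for the transported truncations to be Cauchy on $\ell_2$. Neither holds. Band truncation is a Schur multiplier whose norm bound from the Schur test grows with $n$ (the number of nonzero entries per row/column is only bounded by a constant $C_n$ depending on $n$), so writing $a-a^{(n)}=(a-b_k)-(a-b_k)^{(n)}$ with $b_k$ of finite propagation does not give convergence: $\|(a-b_k)^{(n)}\|\le C_n\|a-b_k\|$ is useless as $n\to\infty$, and uniform boundedness of band truncation genuinely fails. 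Already for $X=\Z$ and $p=2$ (a property A space) the band truncation is the Dirichlet-type partial sum of the ``Fourier series'' of $a\in\cstu(\Z)$, which need not converge in norm -- this is the same phenomenon as the unboundedness of the main triangle projection. This is precisely why the paper does not use naive truncations: it invokes the \v{S}pakula--Willett operators $\mathcal M_n$ of \cite[Corollary 6.5]{SpakulaWillett2017}, which are norm-one, ghost-preserving, finite-propagation-valued \emph{smoothed} (Fej\'er-like) multipliers built from property A kernels, with $\mathcal M_n(a)\to a$; then a finite-propagation ghost lies in $M_\infty^p(X)$ and one is done. In other words, property A is needed to produce the approximation scheme itself, not merely quoted at the end through the $p=2$ theorem.

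The transport to $\ell_2$ is also not available: smallness of $\|a^{(n)}-a^{(m)}\|$ on $\ell_p$ only controls the individual matrix entries, and the Schur estimate for the difference again carries the constant $C_{\max(n,m)}$, so Cauchyness on $\ell_p$ does not transfer to $\ell_2$; there is in general no bounded matrix-coefficient-preserving map $\lproe(X)\to\cstu(X)$, and an operator bounded on $\ell_p$ need not define a bounded operator on $\ell_2$. The same objection applies to pulling the finite-rank truncations back from $\ell_2$ to $\ell_p$, and for $p=1$ there is the additional point (made in the paper right before the proposition) that compactness does not imply membership in $M_\infty^1(X)$, so the $p=1$ case needs its own argument -- the paper handles it by modifying the \v{S}pakula--Willett lemmas. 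Finally, \cite[Proposition 11.43]{RoeBook} is a statement about $\ell_2$ only, so citing it does not close the gap. If you replace your band truncations by the $\mathcal M_n$ (and check, as the paper does, that they preserve ghosts), your argument collapses to the paper's proof and the detour through $p=2$ becomes unnecessary.
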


\begin{proof}
If  $p>1$, \cite[Corollary 6.5]{SpakulaWillett2017} gives  a sequence $(\mathcal M_n\colon\lproe(X)\to\lproe(X))_n$ of norm one operators so that $\mathcal M_n(a)$ has finite propagation and  $a=\lim_n\mathcal M_n(a)$ for all $a\in \lproe(X)$. It follows straightforwardly from the definition of $(\mathcal M_n)_n$ that if $a\in \lproe(X)$ is a ghost, so is $\mathcal M_n(a)$. If $p=1$, a careful look at \cite[Lemma 6.3, Lemma 6.4, and Corollary 6.5]{SpakulaWillett2017} give us the same sequence $(\mathcal M_n\colon\lproe(X)\to\lproe(X))_n$. Indeed, the interested reader only need to perform the following modification in those proofs: given  a map $\varphi\colon X\to [0,1]$ and $p\in (1,\infty)$, the authors of \cite{SpakulaWillett2017} define $\varphi^{p/q}\colon X\to [0,1]$ by letting $\varphi^{p/q}(x)=(\varphi(x))^{p/q}$ for all $x\in X$, where $1/p+1/q=1$. If $p=1$, we only need to make the natural modification, i.e., define $\varphi^{1/\infty}\colon X\to [0,1]$ by letting 
\[\varphi^{1/\infty}(x)=\left\{\begin{array}{ll}
0,& \text{ if } \varphi(x)=0,\\
1,& \text{ otherwise.}
\end{array}\right.\]
Running the proofs of  \cite[Lemma 6.3, Lemma 6.4, and Corollary 6.5]{SpakulaWillett2017} with this modification gives us the desired result.

Fix a ghost $a\in \lproe(X)$; each $\mathcal M_n(a)$ is a ghost of finite propagation, so it belongs to $M_\infty^p(X)$.   Since $a=\lim_n\mathcal M_n(a)$, $a\in M_\infty^p(X)$. 
\end{proof}

When considering maps $\lproe(X)\to\lproe(Y)$, for u.l.f. spaces $X$ and $Y$, our running assumption is that all ghost idempotents belong to $M_\infty^p(Y)$. This automatically implies that $Y$ is u.l.f. (cf. \cite[Lemma 2.2]{BragaFarahVignati2018}).

\subsection{Coarse-like properties}
If $(X,d)$ and $(Y,\partial)$ are metric spaces, a function $f\colon X\to Y$ is said \emph{coarse} if for all $r>0$ there is $s>0$ such that 
\[
d(x,y)<r\text{ implies }\partial(f(x),f(y))<s
\] 
and \emph{expanding} if for all $s>0$ there is $r>0$ such that 
\[
d(x,y)>r\text{ implies }\partial(f(x),f(y))>s.
\]
A map which is both coarse and expanding  is said to be a \emph{coarse embedding}. Two functions $f,g\colon X\to Y$ are \emph{close} if 
\[
\sup_{x\in X}\partial(f(x),g(x))<\infty,
\] and two spaces $X$ and $Y$ are \emph{coarsely equivalent} if there are two coarse maps  $f\colon X\to Y$ and $g\colon Y\to X$ such that  $g\circ f$ and $f\circ g$ are close to $\mathrm{Id}_X$ and $\mathrm{Id}_Y$ respectively. (This automatically implies that $f$ and $g$ are expanding).

We want to study how maps between $\ell_p$ uniform Roe algebras can encode the geometry of the spaces involved. The following notion of regularity was introduced in \cite{BragaFarah2018} and later formalized in \cite{BragaFarahVignati2018} in case $p=2$.

\begin{definition}
Let $p\in [1,\infty)$, and  $X$ and $Y$ be  u.l.f.  metric spaces. 
\begin{enumerate}
\item Given $\eps>0$ and $r\geq 0$, an operator $a\in \cB(\ell_p(X))$ is called \emph{$\eps$-$r$-approximable} if there exists $b\in \cB(\ell_p(X))$ with $\propg(b)\leq r$ so that $\|a-b\|\leq \eps$.
\item A map $\Phi\colon\lproe(X)\to\lproe(Y)$ is \emph{coarse-like} if for all $\eps>0$ and all $r>0$ there exists $s>0$ so that $\Phi(a)$ is $\eps$-$s$-approximable for all $a\in \lproe(X)$ with $\|a\|\leq 1$ and $\propg(a)\leq r$.
\end{enumerate}
\end{definition}

Being able to work with coarse-like maps is a technical key in many of our arguments. Examples of coarse-like maps are strongly continuous maps\footnote{A strongly continuous nonzero homomorphism $\Phi\colon\lproe(X)\to\lproe(Y)$ must be necessarily injective, as $M_\infty^p(X)$ is a strongly dense subalgebra of $\lproe(X)$ contained in all ideals. We will use this fact without mentioning it.}. The next proposition  was proved as \cite[Proposition 3.3]{BragaFarahVignati2019} for $p=2$, and it is a simple consequence of \cite[Lemma 4.9]{BragaFarah2018}. If $p\neq 2$, the proof is essentially the same, and we leave it to the reader.

\begin{proposition}\label{PropCoarseLike}
Let $p\in [1,\infty)$, and  $X$ and $Y$ be u.l.f. metric spaces. Every compact preserving strongly continuous linear map $\Phi\colon\lproe(X)\to \lproe(Y)$ is coarse-like.\qed
\end{proposition}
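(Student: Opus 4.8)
The plan is to run the proof of \cite[Proposition 3.3]{BragaFarahVignati2019} essentially unchanged, once two $\ell_p$ facts that it uses have been recorded. The first is the localization statement \cite[Lemma 4.9]{BragaFarah2018}; the second is that, for a u.l.f.\ space, the ``band-limiting'' Schur multipliers $b\mapsto\sum_{d(u,v)\le s}e_{uu}be_{vv}$ are bounded on $\cB(\ell_p(Y))$ with norm at most $K_s:=\sup_u|\{v:d(u,v)\le s\}|$. I would prove the latter exactly as in the Hilbert space case: such a multiplier is a sum of at most $K_s$ multipliers of the form $b\mapsto\sum_{(u,v)\in E}e_{uu}be_{vv}$ with $E$ a partial matching, and each of the latter factors as ``restrict to a sub-basis, compress to the diagonal, re-embed'', all of which are contractive on $\cB(\ell_p(\cdot))$ for every $p\in[1,\infty)$ — the diagonal compression because $|b_{uu}|\le\|b\|$ while a diagonal operator on $\ell_p$ has norm the supremum of its entries. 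This makes ``$\eps$-$s$-approximable'' equivalent, up to the constant $K_s$, to ``$\|b-\sum_{d(u,v)\le s}e_{uu}be_{vv}\|\le\eps$'', so that all the manipulations of coarse-like maps in \cite{BragaFarahVignati2019} carry over verbatim.

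The skeleton is then the following. First, a finite-dimensional localization lemma: for every finite $S\subseteq X$ and every $\delta>0$ there is a finite $F\subseteq Y$ with $\|\Phi(a)-e_{FF}\Phi(a)e_{FF}\|\le\delta$ for all $a\in M^p(S)$ with $\|a\|\le 1$. This holds because $M^p(S)$ is finite dimensional, so its unit ball is norm-compact and $\Phi$ is norm-continuous on it; compact preservation puts $\Phi[M^p(S)]$ inside $M^p_\infty(Y)=\overline{\bigcup_\lambda M^p(Y_\lambda)}$; and a norm-compact subset of the latter is, up to $\delta$, contained in a single $M^p(Y_\lambda)$ with $Y_\lambda$ finite, by taking a finite subcover. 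Second, fix $\eps,r>0$: since $X$ is u.l.f., a coloring of the pairs $\{(x,y):d(x,y)\le r\}$ using a number of colors $N=N(r,X)$ depending only on $r$ and the local finiteness of $X$ decomposes any $a\in\lproe(X)$ with $\|a\|\le 1$ and $\propg(a)\le r$ as $a=\sum_{j=1}^N a_j$, each $a_j=\sum_i\lambda_i e_{x_i y_i}$ a weighted partial permutation with $|\lambda_i|\le 1$, $d(x_i,y_i)\le r$ and, after refining the coloring, with the clusters $\{x_i,y_i\}$ pairwise $>r$-separated; since $\Phi$ is linear it suffices to $(\eps/N)$-$s$-approximate each $\Phi(a_j)$. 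Third, for such an $a_j$ every finite sub-sum is controlled by the localization lemma, $\Phi(a_j)$ is the strong limit of these sub-sums by strong continuity, and the locality clause of \cite[Lemma 4.9]{BragaFarah2018} lets one attach to each cluster a finite subset of $Y$ of uniformly bounded diameter $s=s(\eps,r)$ whose union carries $\Phi(a_j)$ up to $\eps/N$; that $s$ is the one sought.

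The step I expect to be the crux is the last one: upgrading the set $F$ from the finite-dimensional localization lemma to a union of cluster-by-cluster finite pieces of uniformly bounded diameter, so that this union has propagation bounded by $s(\eps,r)$ even though $a_j$ may be spread over all of $X$. This uniform localization is precisely what \cite[Lemma 4.9]{BragaFarah2018} provides, and — like everything else above — its proof uses only that $\Phi$ is bounded, strongly continuous and compact preserving, never that $p=2$; so the first thing to do is to check that the $\ell_p$ version of that lemma goes through, which is the content of the authors' ``the proof is essentially the same''. The genuine differences from \cite{BragaFarahVignati2019} are cosmetic: one cannot pass to adjoints or use that $\Phi$ is isometric, and for $p=1$ one must work throughout with $M^1_\infty(X)\subsetneq\cK(\ell_1(X))$ rather than with all compact operators. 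Neither affects the structure of the argument.
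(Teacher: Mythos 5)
Your proposal is correct and follows the same route the paper intends: the paper's ``proof'' is precisely to run \cite[Proposition 3.3]{BragaFarahVignati2019}, i.e.\ \cite[Lemma 4.9]{BragaFarah2018}, verbatim after noting that its ingredients are $p$-independent, and that is what you do. The $\ell_p$-specific facts you supply --- the band truncation $b\mapsto\sum_{d(u,v)\le s}e_{uu}be_{vv}$ being bounded by $K_s$ via a decomposition into partial matchings (each contractive on $\cB(\ell_p)$ since weighted partial permutations have norm the supremum of their entries), the norm of a sum of operators with pairwise disjoint row and column supports being the supremum of the norms, and the finite-dimensional localization via compact preservation and $M^p_\infty(Y)=\overline{\bigcup_\lambda M^p(Y_\lambda)}$ --- are exactly the routine checks the authors leave to the reader, and they are argued correctly.
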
 
\subsubsection*{Quasi-locality}
In \cite{SpakulaZhang2018} it was introduced a second notion of regularity capable, in certain cases, of detecting whether an operator belongs to $\lproe(X)$.

\begin{definition}\label{DefQuasiLocal}
Let $p\in [1,\infty)$ and $(X,d)$ be a metric space.
\begin{enumerate}
\item Let $\eps>0$ and $s\in\N$. An operator $a\in \cB(\ell_p(X))$ is \emph{$\eps$-$s$-quasi-local} if $\|\chi_Aa\chi_B\|\leq\eps$ for all $A,B\subseteq X$ with $d(A,B)>s$.  
\item An operator $a\in \cB(\ell_p(X))$ is \emph{quasi-local} if for all $\eps>0$ there exists $s>0$ such that $a$ is $\eps$-$s$-quasi-local.
\end{enumerate}
\end{definition}
If $X$ is u.l.f. and if either  $p=1$ or $X$ has property $A$, then all quasi-local operators in $\cB(\ell_p(X))$ belong to $\lproe(X)$, see \cite[Theorem 3.3]{SpakulaZhang2018}. It is not known whether this holds for all $p\in[1,\infty)$ and all u.l.f. spaces.
The proof of the following is analogous to that of \cite[Lemma 3.8]{BragaFarahVignati2019}.
\begin{lemma}\label{LemmaSumOfQuasiLocalSOT}
Let $p\in [1,\infty)$, $X$ be a metric space, and $\{a_n\}_{n}\subseteq\cB(\ell_p(X))$ be quasi-local operators such that $\sum_{n\in M}a_n$ converges is the strong operator topology to a quasi-local element in $\cB(\ell_p(X))$ for all $M\subseteq \N$. Then for every  $\eps>0$ there exists $n\in\N$ such that   $a_n$ is  $\eps$-$n$-quasi-local.\qed
\end{lemma}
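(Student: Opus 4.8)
# Proof Proposal for Lemma \ref{LemmaSumOfQuasiLocalSOT}

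The plan is to argue by contradiction, following the blueprint of \cite[Lemma 3.8]{BragaFarahVignati2019}. Suppose the conclusion fails: there is a fixed $\eps_0 > 0$ such that for every $n \in \N$, the operator $a_n$ is \emph{not} $\eps_0$-$n$-quasi-local. This means that for each $n$ we can choose subsets $A_n, B_n \subseteq X$ with $d(A_n, B_n) > n$ and $\norm{\chi_{A_n} a_n \chi_{B_n}} > \eps_0$. Pick unit vectors $\xi_n \in \ell_p(X)$ supported on $B_n$ with $\norm{\chi_{A_n} a_n \xi_n} > \eps_0$; replacing $\xi_n$ by a finitely-supported approximation, we may assume each $\xi_n$ has finite support, and likewise that the "witnessing" mass of $\chi_{A_n} a_n \xi_n$ is concentrated on a finite subset of $A_n$. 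The goal is to extract an infinite subset $M \subseteq \N$ along which the supports of the $\xi_n$ (and of the relevant output vectors) are sufficiently spread out in $X$ so that the sum $\sum_{n \in M} a_n$, applied to $\sum_{n \in M} \xi_n$ (suitably normalized for $\ell_p$), detects a violation of quasi-locality of the limit operator.

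The key step is a diagonalization/pigeonhole argument to thin out $\N$ to such an $M$. Since each $a_n$ is individually quasi-local, for each $n$ there is $s_n$ with $a_n$ being $(\eps_0/4)$-$s_n$-quasi-local; so the "interaction" of $a_n$ with far-away regions is negligible. Using u.l.f.-type control is not available here (the lemma only assumes $X$ is a metric space), so instead I would use that the strong-operator convergence of $\sum_{n \in M} a_n$ holds for \emph{every} $M$, together with the finite supports, to recursively choose $n_1 < n_2 < \cdots$ such that: (i) $\supp(\xi_{n_{k+1}})$ and the finite witness set $A_{n_{k+1}}' \subseteq A_{n_{k+1}}$ are $d$-far from $\bigcup_{j \le k}(\supp \xi_{n_j} \cup A_{n_j}')$, beyond the quasi-locality scales $s_{n_j}$ accumulated so far; and (ii) the tail contributions $\sum_{j > k} a_{n_j}$ evaluated on the already-chosen vectors are small (using SOT-convergence on the subset $\{n_j : j > k\}$ and that it is a genuine operator). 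The first condition uses that $d(A_n, B_n) > n \to \infty$ together with finiteness of the support sets to guarantee such far-away choices exist; this is where the hypothesis $d(A_n,B_n)>n$ (rather than just "quasi-local") is crucial.

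Set $M = \{n_k : k \in \N\}$ and let $a = \sum_{n \in M} a_n$ (SOT), which is quasi-local by hypothesis. For the contradiction, fix $N$ large and consider the normalized vector $\eta = N^{-1/p}\sum_{k=1}^N \xi_{n_k}$, which is a unit vector in $\ell_p(X)$ since the $\xi_{n_k}$ have pairwise disjoint supports. One computes $a\eta = N^{-1/p}\sum_{k=1}^N a(\xi_{n_k})$, and then estimates $\norm{\chi_{A_{n_k}'} a(\xi_{n_k})}$ from below: the term $a_{n_k}(\xi_{n_k})$ contributes more than $\eps_0$ on $A_{n_k}'$, while the cross-terms $a_{n_j}(\xi_{n_k})$ for $j \ne k$ are controlled --- for $j < k$ by the quasi-locality of $a_{n_j}$ at scale $s_{n_j}$ (supports chosen far apart), and for $j > k$ by the tail-smallness arranged in step (ii). Letting $A' = \bigsqcup_k A_{n_k}'$ and $B' = \bigsqcup_k \supp(\xi_{n_k})$, which by construction satisfy $d(A', B') = \infty$, one gets $\norm{\chi_{A'} a \chi_{B'}} \ge \norm{\chi_{A'}a\eta} \gtrsim \eps_0$ (using disjointness of supports and the $\ell_p$ norm identity for disjointly-supported vectors, exactly as in the $p=2$ case but with $\ell_p$ replacing $\ell_2$), contradicting quasi-locality of $a$.

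The main obstacle is the bookkeeping in the recursive construction: one must simultaneously control (a) the geometric separation of the chosen support sets against the \emph{growing} family of quasi-locality scales $s_{n_j}$, and (b) the tail sums $\sum_{j>k} a_{n_j}$ on finitely many already-fixed vectors, where the latter requires invoking SOT-convergence on infinitely many different cofinite subsets of $M$-to-be. Making these two requirements compatible --- i.e., showing that at each stage the set of "forbidden" indices $n$ is not all of a tail of $\N$ --- is the delicate point; it works because the separation requirement is automatically met for all large $n$ (as $d(A_n,B_n) > n$), and the tail-smallness is a property of SOT-limits that can be arranged by choosing $n_{k+1}$ large enough. Since the paper states the proof is "analogous to \cite[Lemma 3.8]{BragaFarahVignati2019}", I expect the write-up to simply cite that argument, noting only that $\ell_2$ is replaced by $\ell_p$ throughout and that the norm of a vector with pairwise-disjoint supports behaves correctly in $\ell_p$.
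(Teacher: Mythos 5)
Your contradiction setup (fix $\eps_0$, pick $A_n,B_n$ with $d(A_n,B_n)>n$ and $\norm{\chi_{A_n}a_n\chi_{B_n}}>\eps_0$, thin out to an infinite $M$ and play the quasi-locality of $\sum_{n\in M}a_n$ against the blocks) is the right skeleton, and it is indeed the skeleton of \cite[Lemma 3.8]{BragaFarahVignati2019}, which the paper simply cites. But two steps of your recursion have genuine gaps. First, your separation requirement (i) is neither achievable nor needed: the witness sets $A_n,B_n$ are dictated by the failure of $\eps_0$-$n$-quasi-locality of $a_n$, and the inequality $d(A_n,B_n)>n$ only separates $A_n$ from $B_n$; it says nothing about the position of $A_{n_{k+1}}\cup B_{n_{k+1}}$ relative to the previously chosen finite sets (all the $A_n$ could, for instance, meet one fixed point). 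Consequently your final step collapses: even granting the construction, $d(A',B')$ is an infimum that includes the first blocks, so it is at most roughly $n_1<\infty$, and the claim $d(A',B')=\infty$ is false. The cure is that no cross-separation is required: once one knows $\norm{\chi_{A_{n_k}}\,a_M\,\chi_{B_{n_k}}}\ge\eps_0/2$ for all $k$, a \emph{single} block with $n_k$ larger than the quasi-locality scale $s$ of $a_M$ (at tolerance, say, $\eps_0/4$) already contradicts quasi-locality, because $d(A_{n_k},B_{n_k})>n_k>s$; the averaged vector $\eta$, the unions $A',B'$, and the $\ell_p$ cotype-style bookkeeping are unnecessary.

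Second, and more importantly, your tail control (ii) is asserted but not justified: at the moment you choose $n_{k+1}$, the indices $n_j$ with $j>k+1$ do not exist yet, so ``choosing $n_{k+1}$ large enough'' cannot bound $\bigl\|\sum_{j>k}a_{n_j}\xi_{n_i}\bigr\|$, and SOT-convergence over the not-yet-defined set cannot be invoked. This is exactly where the hypothesis ``for \emph{all} $M\subseteq\N$'' must be used, via a separate uniformity claim: for every vector $\xi$ and every $\delta>0$ there is $N$ such that $\bigl\|\sum_{n\in F}a_n\xi\bigr\|<\delta$ for \emph{every} $F\subseteq\N$ with $\min F\ge N$ (finite $F$ suffices, and infinite $F$ follows by taking SOT limits). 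This is proved by its own small contradiction argument: otherwise one finds pairwise disjoint finite blocks $F_1,F_2,\dots$ with $\bigl\|\sum_{n\in F_i}a_n\xi\bigr\|\ge\delta$, and then $M=\bigcup_iF_i$ gives a series $\sum_{n\in M}a_n\xi$ whose partial sums are not Cauchy, contradicting the assumed SOT convergence for that $M$. With this claim in hand, the recursion closes (choose $n_{k+1}$ beyond the threshold $N$ attached to $\xi_{n_1},\dots,\xi_{n_k}$, and beyond the scale at which the finite past sum $\sum_{j\le k}a_{n_j}$ is $\eps_0/4$-quasi-local), and the single-block contradiction described above finishes the proof. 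As written, your proposal is missing precisely this ingredient, and its geometric-separation substitute does not work.
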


\section{Embeddings}\label{section:embeddings} 
The first part of this section is dedicated to the study of strongly continuous maps between $\ell_p$ uniform Roe algebras. Later, we show that under some reasonable conditions we can assume that the map in Theorem~\ref{thm:generalembedding} is strongly continuous. In \S\ref{subsec:her}  we study maps whose ranges are hereditary and in \S\ref{subsec:propA} we prove stronger results in presence of property A.

Compare the following with \cite[Lemma 5.1]{BragaFarahVignati2019}.
\begin{lemma}\label{LemmaPickMapf}
Let $p\in [1,\infty)$, and  $X$ and $Y$ be u.l.f. metric spaces. Let $\Phi\colon \lproe(X)\to\lproe(Y)$ be a strongly continuous compact preserving nonzero homomorphism. Assume that all  ghosts idempotents in $\lproe(Y)$ belong to $M_\infty^p(Y)$. Then 
\[\delta=\inf_{x\in X}\sup_{y,z\in Y}\|e_{zz}\Phi(e_{xx})e_{yy}\|>0.\]
\end{lemma}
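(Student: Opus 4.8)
The goal is to show that the "fuzz" of the images $\Phi(e_{xx})$ does not degenerate as $x$ ranges over $X$; that is, every $\Phi(e_{xx})$ has a matrix entry bounded below by some fixed $\delta>0$. The plan is to argue by contradiction: suppose $\delta=0$, so there is a sequence $(x_n)$ in $X$ with
\[
\sup_{y,z\in Y}\|e_{zz}\Phi(e_{x_nx_n})e_{yy}\|\longrightarrow 0.
\]
Write $P_n=\Phi(e_{x_nx_n})$. Each $P_n$ is a rank-one idempotent in $\lproe(Y)$ (the image of a rank-one idempotent under an injective homomorphism), and it is compact since $\Phi$ is compact preserving, so in fact $P_n\in M_\infty^p(Y)$. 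The hypothesis says precisely that the matrix entries of $P_n$ tend uniformly to $0$. I would like to assemble from the $P_n$ a ghost idempotent that is \emph{not} in $M_\infty^p(Y)$, contradicting the running assumption.

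\textbf{Key steps.}
First, pass to a subsequence so that the $x_n$ are pairwise distinct and the idempotents $e_{x_nx_n}$ are pairwise orthogonal in $\lproe(X)$; then $Q=\sum_n e_{x_nx_n}$ converges strongly to an idempotent in $\lproe(X)$ (in fact a diagonal projection, of propagation $0$). By strong continuity, $\Phi(Q)=\sum_n P_n$ converges strongly to an idempotent $P=\Phi(Q)\in\lproe(Y)$, and more generally $\Phi(\sum_{n\in M}e_{x_nx_n})=\sum_{n\in M}P_n$ converges strongly to an idempotent below $P$ for every $M\subseteq\N$. The crucial point is then that $P$ is a \emph{ghost}: given $\e>0$, I need a finite $A\subseteq Y$ with $\|e_{zz}Pe_{yy}\|<\e$ for $y,z\notin A$. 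Here one uses two ingredients. (i) Each $P_n$, being in $M_\infty^p(Y)$, is supported (up to $\e 2^{-n}$) on a finite set $A_n\subseteq Y$; (ii) the hypothesis gives that each individual $P_n$ has all entries $<\e$ (for large $n$, after discarding finitely many). I would try to arrange, perhaps after a further diagonal thinning using coarse-likeness (Proposition~\ref{PropCoarseLike}) or the quasi-locality machinery of Lemma~\ref{LemmaSumOfQuasiLocalSOT}, that the supports $A_n$ can be taken "spread out" enough that outside a single finite set every entry $e_{zz}Pe_{yy}$ is controlled by at most one or two of the $P_n$'s, hence is $<2\e$. That would show $P$ is a ghost idempotent.

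Finally, $P$ is not in $M_\infty^p(Y)$: since $\Phi$ is injective and $Q$ is an infinite-rank idempotent in $\lproe(X)$ (it dominates the mutually orthogonal rank-one idempotents $e_{x_nx_n}$), $P=\Phi(Q)$ cannot be compact — if it were, then $\Phi$ restricted to the hereditary subalgebra $Q\lproe(X)Q$, which contains the noncompact idempotent $\sum_n e_{x_{2n}x_{2n}}$... more simply, $\Phi$ injective and $Q\notin M_\infty^p(X)$ forces, via the ideal structure ($M_\infty^p$ being contained in every ideal and the kernel being trivial), $\Phi(Q)\notin M_\infty^p(Y)$; one checks an injective homomorphism cannot send a non-compact idempotent to a compact one because a compact idempotent has finite rank and $\Phi$ would then collapse the infinite-dimensional corner $Q\lproe(X)Q$. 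This contradicts the assumption that all ghost idempotents in $\lproe(Y)$ belong to $M_\infty^p(Y)$, proving $\delta>0$.

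\textbf{Main obstacle.} The delicate step is showing $P=\Phi(Q)$ is genuinely a ghost, i.e., upgrading "each $P_n$ individually has small entries and is essentially finitely supported" to "the strong-operator sum $\sum_n P_n$ has eventually small entries off a finite set." The entries of $P$ at $(z,y)$ are $\sum_n e_{zz}P_ne_{yy}$, and a priori infinitely many terms could pile up at a fixed pair; controlling this requires either a separation/thinning of the supports $\supp(P_n)$ (exploiting that $Y$ is u.l.f. and the $P_n$ are essentially finitely supported, after passing to a subsequence whose supports have pairwise large distance) or an application of the quasi-locality lemma to see that for each $\e$ some tail $P_n$ is $\e$-$n$-quasi-local and hence, combined with being essentially finitely supported and having small sup-norm-entries, contributes negligibly outside a finite set. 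Getting these uniformities to line up is where the real work lies; I expect this is exactly the place where the proof in the paper, following the template of \cite[Lemma 5.1]{BragaFarahVignati2019}, does its careful bookkeeping.
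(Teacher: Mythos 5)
Your skeleton is the paper's: assume the infimum is zero, form $P=\Phi\bigl(\sum_n e_{x_nx_n}\bigr)$ using strong continuity, argue that $P$ is an infinite-rank (hence non-compact) ghost idempotent, and contradict the hypothesis that ghost idempotents lie in $M_\infty^p(Y)$. But the step you yourself flag as the ``main obstacle'' --- showing that the SOT-sum $P$ is actually a ghost --- is precisely the content of the lemma, and your proposal does not close it: you only extract a sequence with $\sup_{y,z}\|e_{zz}\Phi(e_{x_nx_n})e_{yy}\|\to 0$ and then hope to prevent the pile-up of infinitely many small entries by thinning so that the essential supports of the $\Phi(e_{x_nx_n})$ are pairwise far apart. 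That separation is not available in general (all the essential supports may meet a common finite set, and arranging that points outside some fixed finite set lie in only boundedly many supports requires an argument you do not give), and the quasi-locality lemma you invoke is not designed to produce the single finite exceptional set that the ghost condition demands.

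The missing idea is much simpler and is where the paper's proof actually lives: since the infimum is $0$, you may \emph{choose} the sequence $(x_n)_n$ so that the entry bounds are summable, e.g.\ $\|e_{zz}\Phi(e_{x_nx_n})e_{yy}\|<2^{-n}$ for all $y,z\in Y$. Then for any $\eps>0$ the tail $\sum_{n>N}\|e_{zz}\Phi(e_{x_nx_n})e_{yy}\|<2^{-N}<\eps/2$ uniformly in $y,z$, with no separation of supports needed; and the finitely many initial terms $\Phi(e_{x_1x_1}),\dots,\Phi(e_{x_Nx_N})$ are compact (compact preserving), hence ghosts, so a single finite $A\subseteq Y$ makes each of their entries $<\eps/(2N)$ outside $A$. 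Summing gives $\|e_{zz}Pe_{yy}\|\leq\eps$ for $y,z\notin A$, so $P$ is a ghost; since $\Phi$ is injective (nonzero strongly continuous), the $\Phi(e_{x_nx_n})$ are nonzero pairwise orthogonal idempotents below $P$, so $P$ has infinite rank and $P\notin M_\infty^p(Y)$, giving the contradiction --- this part of your argument is fine. (Minor slip: $\Phi(e_{xx})$ need not be rank one here; without a hereditary-range hypothesis you only know it is a nonzero compact idempotent, which is all that is used.)
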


\begin{proof}
By contradiction, suppose that there exists a sequence $(x_n)_n\subseteq X$ such that $\|e_{zz}\Phi(e_{x_nx_n})e_{yy}\|<2^{-n}$ for all $n\in\N$ and all $y,z\in X$. Without loss of generality,  we can assume the elements of $(x_n)_n$ to be distinct. Set   
\[
P=\Phi\Big(\sum_ne_{x_nx_n}\Big)=\mathrm{SOT}\text{-}\lim_k\sum_{n\leq k}\Phi(e_{x_nx_n}),
\]
 where the second equality follows from the fact that $\Phi$ is strongly continuous. Note that $P$ is an idempotent of infinite rank, so $P\not\in M_\infty^p(Y)$. Let $\eps>0$ and pick $N\in \N$ with $2^{N}<\eps/2$. Since $\Phi$ is compact preserving, $\Phi(e_{x_nx_n})$ is a ghost for all $n\in\N$. Pick a finite $A\subset Y$ such that $\|e_{zz}\Phi(e_{x_nx_n})e_{yy}\|<\eps/(2N)$ for all $y,z\not\in A$ and all $n\leq N$. Then
\[\|e_{zz}Pe_{yy}\|\leq\sum_{n=1}^N \|e_{zz}\Phi(e_{x_nx_n})e_{yy}\|+\sum_{n>N} \|e_{zz}\Phi(e_{x_nx_n})e_{yy}\|\leq \eps\]
for all $y,z\not\in A$, that is, $P$ is a ghost idempotent not belonging to $M_\infty^p(Y)$, a contradiction.
\end{proof}

\begin{lemma}\label{LemmaCloseAndUnfFiniteToOne}
Let $p\in [1,\infty)$, $\delta>0$,  $X$ and $Y$ be  u.l.f. metric spaces, $\Phi\colon \lproe(X)\to\lproe(Y)$ be a bounded homomorphism, and $f,g\colon X\to Y$  be maps. Then:
\begin{enumerate}
\item\label{ItemCloseAndUnfFiniteToOne1} if $\Phi$ is coarse-like and $\|e_{g(x)g(x)}\Phi(e_{xx})e_{f(x)f(x)}\|\geq \delta$ for all $x\in X$, then $f$ and $g$ are close;
\item\label{ItemCloseAndUnfFiniteToOne2} if $\|\Phi(e_{xx})e_{f(x)f(x)}\|\geq \delta$ for all $x\in X$, then $f$ is uniformly finite-to-one\footnote{A function $f\colon X\to Y$ is said \emph{uniformly finite-to-one} if $\sup_{y\in Y}|f^{-1}(\{y\})|<\infty$.}.
\end{enumerate}
\end{lemma}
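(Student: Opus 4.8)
\textbf{Proof plan for Lemma \ref{LemmaCloseAndUnfFiniteToOne}.}

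The plan is to treat the two items separately, since the second is essentially the key mechanism and the first is a slight strengthening using the coarse-like hypothesis.

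For \eqref{ItemCloseAndUnfFiniteToOne2}: I would argue by contradiction. Suppose $f$ is not uniformly finite-to-one, so there is some fixed $y_0\in Y$ (or a sequence of $y_n$'s; the cleanest version picks one $y$ with arbitrarily large preimage) with $\card{f^{-1}(\{y\})}$ unbounded, hence for each $k$ one can find $k$ distinct points $x_1,\dots,x_k\in X$ with $f(x_i)=y$. The idempotents $e_{x_ix_i}$ are mutually orthogonal in $\lproe(X)$, so $\norm{\sum_{i=1}^k \lambda_i e_{x_ix_i}}=\max_i\abs{\lambda_i}$ for any scalars (this is the $\ell_p$ analogue of the $C^*$ fact; it holds because these are disjointly supported diagonal projections on $\ell_p(X)$). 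Applying the bounded homomorphism $\Phi$ and then compressing by $e_{yy}$ on the right: each $\Phi(e_{x_ix_i})e_{yy}$ is a rank $\le 1$ operator into $\ell_p(Y)$ landing in the line $\bbC\delta_y$, i.e. $\Phi(e_{x_ix_i})e_{yy}=\xi_i\otimes(c_i\delta_y^*)$ for some $\xi_i\in\ell_p(Y)$, and since the $e_{x_ix_i}$ are orthogonal idempotents with $e_{x_ix_i}e_{x_jx_j}=0$, the images $\Phi(e_{x_ix_i})$ are orthogonal idempotents as well; in particular $\Phi(e_{x_ix_i})\Phi(e_{x_jx_j})=0$. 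The hypothesis gives $\norm{\Phi(e_{x_ix_i})e_{yy}}\ge\delta$. Now I would choose the scalars $\lambda_i$ to be unimodular and aligned so that the rank-one pieces $\lambda_i\,\Phi(e_{x_ix_i})e_{yy}$ all push $\delta_y$ in "compatible" directions — concretely, write $\Phi(e_{x_ix_i})\delta_y=\eta_i$ with $\norm{\eta_i}\ge\delta$, note the $\eta_i$ are (up to scalar) the ranges of orthogonal idempotents so $\Phi(\sum\lambda_i e_{x_ix_i})\delta_y=\sum\lambda_i\eta_i$, and estimate $\norm{\sum_{i=1}^k\lambda_i\eta_i}$ from below. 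Because $\eta_i\in\ran\Phi(e_{x_ix_i})$ and these ranges satisfy $\Phi(e_{x_jx_j})\eta_i=0$ for $j\ne i$, applying $\Phi(e_{x_ix_i})$ recovers $\eta_i$, so $\norm{\sum\lambda_i\eta_i}\ge \norm{\Phi(e_{x_ix_i})}^{-1}\norm{\eta_i}\ge (\norm{\Phi})^{-1}\delta$ — wait, that only gives a bound independent of $k$ from a single term. The right move is instead to use that the $k$ pieces are "spread out" in $\ell_p(Y)$: the supports need not be disjoint, but one can instead bound $\norm{\Phi}\ge \norm{\Phi(\sum\lambda_i e_{x_ix_i})}/1 \ge \norm{\sum\lambda_i\eta_i}/\norm{\delta_y}$ and separately show $\norm{\sum\lambda_i\eta_i}$ grows with $k$. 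To see the growth, I would use u.l.f. of $Y$ together with the finite propagation structure: actually the cleanest route, mirroring \cite[Lemma 5.1]{BragaFarahVignati2019}, is to compress on the \emph{left} by a single $e_{zz}$. Since $\norm{\Phi(e_{x_ix_i})e_{yy}}\ge\delta$ there is $z_i\in Y$ with $\norm{e_{z_iz_i}\Phi(e_{x_ix_i})e_{yy}}\ge\delta/\norm{\cdot}$-ish; by u.l.f. and the $\eps$-$s$-approximability one shows the $z_i$ cannot all be distinct nor too few, forcing a fixed $z$ hit by many $i$, and then $\norm{e_{zz}\Phi(\sum\lambda_i e_{x_ix_i})e_{yy}}$ is a single scalar times $\delta_z$, namely $\big(\sum_i \lambda_i c_i\big)\delta_z$ with $\abs{c_i}\ge\delta$; choosing $\lambda_i$ to align the phases of the $c_i$ makes $\abs{\sum\lambda_i c_i}\ge k\delta$, while the left side is $\le\norm{\Phi}\cdot 1$, a contradiction once $k>\norm{\Phi}/\delta$. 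That pins the bound $\sup_y\card{f^{-1}(\{y\})}\le\norm{\Phi}/\delta$.

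For \eqref{ItemCloseAndUnfFiniteToOne1}: I would show $\sup_x\partial(f(x),g(x))<\infty$. Fix $r>0$; since $\Phi$ is coarse-like and each $e_{xx}$ has propagation $0$ and norm $1$, there is $s=s(\eps)$ (for $\eps$ to be chosen as $\delta/2$) so that every $\Phi(e_{xx})$ is $\eps$-$s$-approximable, i.e. there is $b_x$ with $\propg(b_x)\le s$ and $\norm{\Phi(e_{xx})-b_x}\le\eps$. Then $\norm{e_{g(x)g(x)}b_x e_{f(x)f(x)}}\ge \norm{e_{g(x)g(x)}\Phi(e_{xx})e_{f(x)f(x)}}-\eps\ge\delta-\eps=\delta/2>0$, so by definition of propagation $\partial(f(x),g(x))\le s$. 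Since $s$ does not depend on $x$, $f$ and $g$ are close.

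The main obstacle I anticipate is the $\ell_p$, $p\ne 2$ bookkeeping in item \eqref{ItemCloseAndUnfFiniteToOne2}: in the $C^*$ case one freely uses orthogonality of projections and the $C^*$-identity to normalize norms of sums of orthogonal pieces, whereas here $\Phi(e_{xx})$ are merely idempotents (not self-adjoint projections, indeed $\Phi$ need not be a $*$-map) and the norm of a sum of orthogonal idempotents on $\ell_p$ is not simply the max. The fix is to avoid summing the idempotents directly and instead (as sketched) compress to a single matrix entry $e_{zz}(\,\cdot\,)e_{yy}$, which is one-dimensional, reducing everything to a scalar estimate $\abs{\sum_i\lambda_i c_i}$ with $\abs{c_i}\ge\delta$; there the choice $\lambda_i=\overline{c_i}/\abs{c_i}$ forces linear growth and the contradiction with boundedness of $\Phi$ is immediate. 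Extracting the common $z$ (rather than merely a bounded set of $z$'s) uses u.l.f. of $Y$ and a pigeonhole on the finitely many $z$ within distance $s$ of $y$, exactly as in the cited lemma.
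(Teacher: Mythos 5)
Your item \eqref{ItemCloseAndUnfFiniteToOne1} is exactly the paper's argument (approximate $\Phi(e_{xx})$ within $\delta/2$ by an operator of propagation $\leq s$ and read off $\partial(f(x),g(x))\leq s$), and is fine. The problem is item \eqref{ItemCloseAndUnfFiniteToOne2}: the argument you settle on is not a proof of the statement as given, because it uses the coarse-like hypothesis ($\eps$-$s$-approximability of the $\Phi(e_{x_ix_i})$, plus u.l.f.\ of $Y$, to force the vectors $\eta_i=\Phi(e_{x_ix_i})\delta_y$ to have a large coordinate inside a fixed finite ball around $y$, and then pigeonhole on a common coordinate $z$). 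But part \eqref{ItemCloseAndUnfFiniteToOne2} assumes only that $\Phi$ is a \emph{bounded homomorphism}; coarse-likeness is a hypothesis of part \eqref{ItemCloseAndUnfFiniteToOne1} only. Without it there is no control on where the mass of $\eta_i$ sits: each $\eta_i$ has $\|\eta_i\|_p\geq\delta$ but all individual entries $|\eta_i(z)|=\|e_{zz}\Phi(e_{x_ix_i})e_{yy}\|$ can be arbitrarily small and spread over disjoint regions, so no common $z$ with a uniform lower bound need exist and your scalar estimate $\big|\sum_i\lambda_ic_i\big|\geq k\delta'$ never gets off the ground. (Your first attempt inside \eqref{ItemCloseAndUnfFiniteToOne2}, via ``orthogonality'' of the idempotents $\Phi(e_{x_ix_i})$, you rightly abandoned: it only yields a bound from a single term.)

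The missing idea is a Banach-space one, not a coarse-geometric one: the paper uses that $\ell_p(Y)$ has cotype $\max\{2,p\}$. With $F=f^{-1}(\{y\})$ finite, every signed sum satisfies $\big\|\sum_{x\in F}\eps_x\Phi(e_{xx})\big\|=\big\|\Phi\big(\sum_{x\in F}\eps_xe_{xx}\big)\big\|\leq\|\Phi\|$, while averaging over signs and applying cotype to the vectors $\Phi(e_{xx})\delta_y$ (each of norm $\geq\delta$) gives
\[
\|\Phi\|^{\max\{2,p\}}\;\geq\;\frac{1}{2^{|F|}}\sum_{(\eps_x)\in\{-1,1\}^F}\Big\|\sum_{x\in F}\eps_x\Phi(e_{xx})\delta_y\Big\|^{\max\{2,p\}}\;\geq\;c\,\delta^{\max\{2,p\}}|F|,
\]
so $|F|\leq c^{-1}\delta^{-\max\{2,p\}}\|\Phi\|^{\max\{2,p\}}$, with no regularity assumption on $\Phi$ beyond boundedness. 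Note that some such Banach-geometric input is unavoidable: in a space like $c_0$ one can have arbitrarily many norm-one vectors all of whose $\pm1$ combinations are uniformly bounded, so boundedness of $\Phi$ alone, in a general Banach space, would not bound $|F|$. Your pigeonhole-on-a-matrix-entry argument can indeed be completed \emph{if} one adds the coarse-like hypothesis (with the extra loss $(\delta-\eps)N_s^{-1/p}$ for the single-entry lower bound, $N_s$ the u.l.f.\ constant of the $s$-ball, and summing only over the indices sharing the chosen $z$), and in the application to Theorem \ref{thm:generalembedding} that hypothesis happens to be available; but it proves a strictly weaker lemma than stated, and the sign-averaging/cotype trick is reused later (in the claim inside Theorem \ref{ThmEmbImpliesStrongEmb}) precisely in a situation where coarse-likeness is not yet known, so the cotype route is the one you actually need.
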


\begin{proof}
\eqref{ItemCloseAndUnfFiniteToOne1} Since $\Phi$ is coarse-like,   there exists $m>0$ such that $\Phi(e_{xx})$ is $\delta/2$-$m$-approximable for all $x\in X$. In particular, $\partial(y,y')>m$ implies $\|e_{yy}\Phi(e_{xx})e_{y'y'}\|<\delta$. So, $\partial(f(x),g(x))\leq m$ for all $x\in X$. 

\eqref{ItemCloseAndUnfFiniteToOne2} Since $\ell_p(X)$ has cotype\footnote{Let $p\in [2,\infty)$. A Banach space $\mathcal X$ is said to have \emph{cotype $p$ } if there exists a constant $c>0$ such that 
\[c\sum_{i=1}^n\|x_i\|^p\leq \frac{1}{2^n}\sum_{(\eps_i)_{i=1}^{n}}\Big\|\sum_{i=1}^n\eps_ix_i\Big\|^p\]
for all $n\in\N$ and all $x_1,\ldots, x_n\in \mathcal X$.} ${\max\{2,p\}}$ (see \cite[Theorem 6.2.14]{AlbiacKalton2006}), there exists $c>0$ such that 
\[\frac{1}{2^n}  \sum_{(\eps_i)_i\in \{-1,1\}^n}\Big\|\sum_{i=1}^n\eps_i\xi_i\Big\|^{\max\{2,p\}}  \geq c \sum_{i=1}^n\|\xi_i\|^{\max\{2,p\}} \]
for all $n\in\N$ and all $\xi_1,\ldots, \xi_n\in \ell_p(X)$.
Let $f\colon X\to Y$ be a map such that $\|\Phi(e_{xx})\delta_{f(x)}\|\geq \delta$ for all $x\in X$. Let $y\in Y$ and $F\subseteq X$ be a  finite subset such that $f(x)=y$ for all $x\in F$. Since 
\[
\Big\|\sum_{x\in F}\eps_x\Phi(e_{xx})\Big\|=\Big\|\Phi\Big(\sum_{x\in F}\eps_xe_{xx}\Big)\Big\|\leq \|\Phi\|
\] for all $(\eps_x)_{x\in F}\in \{-1,1\}^{F}$,   it follows that 
\[
\|\Phi\|^{\max\{2,p\}}\geq\frac{1}{2^{|F|}} \sum_{(\eps_i)_i\in \{-1,1\}^{F}}\Big\|\sum_{x\in F}\eps_x\Phi(e_{xx})\delta_y\Big\|^{\max\{2,p\}} \geq c\delta^{\max\{2,p\}}|F|.
\]
This    implies that 
\[
|F|\leq c^{-1}\delta^{-{\max\{2,p\}}}\|\Phi\|^{\max\{2,p\}}.\qedhere
\]
\end{proof}

We just showed that strongly continuous maps are capable of relating the geometry of $X$ to the one of $Y$. We will work to be able to assume that our maps are strongly continuous. 
Our next result plays the role of \cite[Lemma 3.4]{BragaFarahVignati2018}; lacking adjoints,  we have to slightly modify its proof.
\begin{lemma} \label{L:AA} Let $p\in [1,\infty)$,   $X$ and $Y$ be 
metric spaces 
 and  $\Phi\colon \ell_\infty(X)\to \lproe(Y)$ be 
  a strongly continuous  homomorphism.  
Then for every   $b\in M_\infty^p(Y)$  and 
every $\e>0$ there exists a finite  $F\subseteq X$ such that 
for all contractions $a\in \ell_\infty(X\setminus F)$ we have 
$\max(\norm{b\Phi(a)},\norm{\Phi(a)b})<\e$. 
\end{lemma}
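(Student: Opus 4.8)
The plan is to reduce to a single diagonal matrix unit and then analyze the idempotents $\Phi(\chi_{X\sm F})$. Since $M_\infty^p(Y)$ is the norm closure of $\bigcup_\lambda M^p(Y_\lambda)$, $\|\Phi\|<\infty$, every element of $M^p(Y_\lambda)$ equals $\chi_{Y_\lambda}(\cdot)\chi_{Y_\lambda}$, and $\chi_{Y_\lambda}=\sum_{y\in Y_\lambda}e_{yy}$, it is enough to prove the statement for $b=e_{yy}$ with $y\in Y$ arbitrary. Fix an increasing sequence $(F_n)$ of finite subsets of $X$ with $\bigcup_nF_n=X$ (in the situations where the lemma is applied $X$ is uniformly locally finite, hence countable) and set $Q_n:=\Phi(\chi_{X\sm F_n})$. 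Since $\chi_{F_n}\to\Id$ in the strong operator topology (SOT) of $\mathcal B(\ell_p(X))$ and $\Phi$ is strongly continuous, $Q_n\to 0$ in SOT; moreover each $Q_n$ is an idempotent with $\|Q_n\|\le\|\Phi\|$, $Q_nQ_m=Q_mQ_n=Q_{\max(m,n)}$, and $\Phi(a)=Q_n\Phi(a)=\Phi(a)Q_n$ whenever $a\in\ell_\infty(X\sm F_n)$ (because then $a\chi_{X\sm F_n}=\chi_{X\sm F_n}a=a$). Hence it suffices to prove $\|e_{yy}Q_n\|\to 0$ and $\|Q_ne_{yy}\|\to 0$: choosing $n$ with both quantities $<\e/\|\Phi\|$ and putting $F:=F_n$, every contraction $a\in\ell_\infty(X\sm F)$ satisfies $\|e_{yy}\Phi(a)\|=\|e_{yy}Q_n\Phi(a)\|<\e$ and likewise $\|\Phi(a)e_{yy}\|<\e$.

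The estimate $\|Q_ne_{yy}\|=\|Q_n\delta_y\|_{\ell_p(Y)}\to 0$ is immediate from $Q_n\to 0$ in SOT. The other one, $\|e_{yy}Q_n\|=\sup_{\|\eta\|_p\le1}\bigl|(Q_n\eta)(y)\bigr|=\|Q_n^{*}\delta_y^{\vee}\|_{\ell_p(Y)^{*}}$ (with $\delta_y^{\vee}$ the evaluation functional at $y$), is where the absence of an adjoint operation matters, as one cannot pass from strong convergence of $Q_n$ to strong convergence of $Q_n^{*}$; I get around this with a subseries argument. Put $R_n:=Q_n-Q_{n+1}=\Phi(\chi_{F_{n+1}\sm F_n})$: these are pairwise orthogonal idempotents with $Q_N=\mathrm{SOT}\text{-}\sum_{n\ge N}R_n$. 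The key point is that for \emph{every} $M\subseteq\N$ the partial sums of $\sum_{n\in M}R_n$ converge in SOT, namely to $\Phi\bigl(\sum_{n\in M}\chi_{F_{n+1}\sm F_n}\bigr)\in\lproe(Y)$, because $\Phi$ is strongly continuous and the partial sums of $\sum_{n\in M}\chi_{F_{n+1}\sm F_n}$ converge in SOT inside $\ell_\infty(X)$. Setting $w_n:=R_n^{*}\delta_y^{\vee}\in\ell_p(Y)^{*}$, this says that for every $M\subseteq\N$ the partial sums of $\sum_{n\in M}w_n$ converge in $\sigma(\ell_p(Y)^{*},\ell_p(Y))$.

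To upgrade this to norm convergence of $\sum_nw_n$ I apply the Orlicz--Pettis theorem, distinguishing two cases. If $p\in(1,\infty)$ then $\ell_p(Y)^{*}=\ell_q(Y)$ is reflexive, so $\sigma(\ell_p(Y)^{*},\ell_p(Y))$ is the weak topology and every subseries of $\sum_nw_n$ converges weakly, hence $\sum_nw_n$ converges unconditionally in norm. If $p=1$ one uses that $Y$ is uniformly locally finite: every operator in $\loneroe(Y)$ has all its rows in $c_0(Y)$ (true for finite-propagation operators, and preserved by norm limits), so all $w_n$ and all the $\sigma(\ell_\infty,\ell_1)$-limits $\Phi\bigl(\sum_{n\in M}\chi_{F_{n+1}\sm F_n}\bigr)^{*}\delta_y^{\vee}$ lie in $c_0(Y)$, where $\sigma(\ell_\infty,\ell_1)$ coincides with the weak topology, so Orlicz--Pettis applies in $c_0(Y)$ and again $\sum_nw_n$ converges in norm. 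In either case $\sum_{N\le n\le M}w_n=Q_N^{*}\delta_y^{\vee}-Q_{M+1}^{*}\delta_y^{\vee}$ converges, as $M\to\infty$, both to $Q_N^{*}\delta_y^{\vee}$ in $\sigma(\ell_p(Y)^{*},\ell_p(Y))$ (since $Q_{M+1}\to 0$ in SOT) and in norm to the tail $\sum_{n\ge N}w_n$ of a norm-convergent series; therefore $\|e_{yy}Q_N\|=\|Q_N^{*}\delta_y^{\vee}\|=\bigl\|\sum_{n\ge N}w_n\bigr\|\to 0$, which completes the proof. I expect this last estimate to be the only genuine difficulty: for $p=2$ the $Q_n$ are self-adjoint projections and it follows by taking adjoints, while in general one has to extract the purely algebraic fact that $\sum_{n\in M}R_n$ always converges and then run Orlicz--Pettis in the right dual space (reflexive $\ell_q(Y)$ when $p>1$, and $c_0(Y)$ when $p=1$).
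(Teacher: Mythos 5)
Your proof is correct, and it takes a genuinely different route from the paper's. The paper argues by contradiction: after replacing $b$ by an element of $M^p(F)$ for a finite $F\subseteq Y$, it extracts disjoint finite sets $E_n\subseteq X$ and contractions $a_n\in\ell_\infty(E_n)$ with $\|b\Phi(a_n)\|\geq\e/2$, then --- this is its substitute for the missing adjoint --- enlarges $F$ so that $\inf_n\|\chi_F\Phi(a_n)\chi_F\|>0$, using that rows of operators in $\lproe(Y)$ are approximable by finitely many columns (local finiteness of $Y$), and finally notes that strong continuity makes $\sum_n\Phi(a_n)$ SOT-convergent, so the compressions $\chi_F\Phi(a_n)\chi_F$, living in the finite-dimensional corner $M^p(F)$, must tend to $0$ in norm, a contradiction. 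You instead give a direct proof: you reduce to $b=e_{yy}$, get the column estimate $\|Q_ne_{yy}\|\to 0$ immediately from SOT-convergence, and handle the row estimate $\|e_{yy}Q_n\|\to 0$ by observing that the row functionals $w_n=R_n^*\delta_y^\vee$ form a series all of whose subseries converge in $\sigma(\ell_p(Y)^*,\ell_p(Y))$ (by strong continuity of $\Phi$), and then invoking Orlicz--Pettis, in the reflexive space $\ell_q(Y)$ when $p>1$ and in $c_0(Y)$ when $p=1$ (rows of operators in $\loneroe(Y)$ lie in $c_0(Y)$). Your route buys something for $p>1$: no local finiteness of $Y$ is needed there, whereas the paper's corner argument uses it for every $p$; the price is quoting Orlicz--Pettis and the exhaustion $(F_n)_n$, which requires $X$ countable --- harmless for every application in the paper, since u.l.f.\ spaces are countable, but strictly narrower than the statement, while the paper's contradiction argument works for arbitrary $X$. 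Note also that your $p=1$ case, exactly like the paper's own proof, silently uses local finiteness of $Y$, a hypothesis not literally present in the statement of the lemma.
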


\begin{proof} Suppose  $b\in M_\infty^p(Y)$ and $\eps>0$  contradict  the thesis. By approximating $b$, we can assume that $b\in M^p(F)$ for some finite $F\subseteq Y$. The strong continuity of $\Phi$ allows us to   pick a sequence  $(E_n)_n$ of  disjoint finite subsets of $X$ and contractions $a_n\in \ell_\infty(E_n)$ such that $\|b\Phi(a_n)\|\geq \eps/2$ for all $n\in\N$ (if we can only assume that $\|\Phi(a_n)b\|\geq \delta/2$ for all $n\in\N$, the proof will follow analogously). Then, for all $n\in\N$, 
\[
\|\chi_F\Phi(a_n)\|\geq\|\chi_Fb\chi_F\Phi(a_n)\|=\|b\Phi(a_n)\|\geq\eps.
\]
\begin{claim}\label{ClaimRows}
Replacing  $F$ with a larger finite subset of $Y$,  we can assume that $\inf_n\|\chi_F\Phi(a_n)\chi_F\|>0$.
\end{claim}

\begin{proof} Since $F$ is finite, $\Phi(a_n)\in \lproe(Y)$ and $Y$ is u.l.f., each $\chi_F\Phi(a_n)$ can be arbitrarily well approximated by some $\chi_F\Phi(a_n)\chi_G$ for a sufficiently large finite $G\subset Y$. Hence, if the claim fails,    we can pick  a sequence $(F_n)_n$ of finite disjoint subsets of $Y$ so that $\delta=\inf_n\|\chi_F\Phi(a_n)\chi_{F_n}\|>0$ and $\|\chi_F\Phi(a_n)\chi_{F_m}\|<2^{-n-1}\delta$ for all $n\neq m$. Similarly, there is a finite $G$ such that $\norm{\chi_F\Phi(\sum_na_n)-\chi_F\Phi(\sum_na_n)\chi_{G}}<\delta/4$.  Pick $m$ such that $G\cap F_m=\emptyset$. The contradiction then comes from 
\[
\frac{\delta}{4}> \Big\|\chi_{F}\Phi\Big(\sum_na_n\Big)\chi_{F_m}\Big\|\geq\|\chi_{F}\Phi(a_m)\chi_{F_m}\| -\sum_{n\neq m}\|\chi_{F}\Phi(a_n)\chi_{F_m}\|\geq \frac{\delta}{2}.\qedhere
\]
\end{proof}
 
Suppose $\inf_n\|\chi_F\Phi(a_n)\chi_F\|>0$. Since $\sum_na_n$ converges in the strong operator topology and $\Phi$ is strongly continuous, $\sum_n\Phi(a_n)$ converges in the strong operator topology. Since $F$ is finite, the sum $\sum_{n}\chi_F\Phi(a_n)\chi_F$ converges in norm; contradiction, since  $\inf_n\norm{\chi_F\Phi(a_n)\chi_F}>0$.
\end{proof}

 Let $p\in[1,\infty)$ and   $X$ be a countable metric space. Since   $M_\infty^p(X)=\cK(\ell_p(X))\cap \lproe(X)$ is an ideal of $\cB(\ell_p(X))$, we define the \emph{$\ell_p$ uniform Roe corona of $X$} by 
 \[
 \mathrm{Q}^p_u(X)=\lproe(X)/M_\infty^p(X) \] 
 and let $\pi_X\colon \lproe(X)\to\mathrm{Q}^p_u(X)$ be the quotient map\footnote{We refer the reader to \cite{BragaFarahVignati2018} for a detailed study of uniform Roe \cstar-coronas.}. Given a metric space $Y$, maps $\Phi\colon\ell_\infty(X)\to \cB(\ell_p(Y))$ and  $\Lambda\colon\ell_\infty(X)/c_0(X)\to \mathrm{Q}^p_u(Y)$, and an ideal $\SJ\subset\cP(X)$, we say that \emph{$\Phi$ lifts $\Lambda $ on $\SJ$} if  
 \[
\Lambda\circ\pi_X(\chi_A)=\pi_Y\circ\Phi(\chi_A), \,\, 
\text{ for all }A\in \SJ.
\]
 The proof of the following is analogous to the one of \cite[Proposition 3.3]{BragaFarahVignati2018}, by using Lemma \ref{L:AA} instead of  \cite[Lemma 3.4]{BragaFarahVignati2018}.

\begin{proposition}\label{prop:stayinside}
Let $p\in [1,\infty)$, and  $X$ and $Y$ be countable metric spaces. 
 Suppose that $\Phi\colon\ell_\infty(X)\to\cB(\ell_p(Y))$ is a strongly continuous
 homomorphism  lifting a  homomorphism  $\Lambda\colon\mathrm{Q}^p_u(X)\to \mathrm{Q}^p_u(Y)$  on a nonmeager ideal  $\SI\subset \cP(X)$  
containing all finite subsets of $X$. Then $\Phi$ is coarse-like and the 
image of~$\Phi$ is contained in $\lproe(Y)$.\qed
\end{proposition}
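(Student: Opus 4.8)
The plan is to adapt the proof of \cite[Proposition 3.3]{BragaFarahVignati2018}, using Lemma \ref{L:AA} in place of \cite[Lemma 3.4]{BragaFarahVignati2018}. Regarding $\ell_\infty(X)$ as the algebra of diagonal operators inside $\lproe(X)$, I would first reduce to showing that $\Phi$ is coarse-like: if $\Phi$ is coarse-like then every $\Phi(a)$, $a$ a contraction in $\ell_\infty(X)$, is a norm-limit of finite-propagation operators, so $\ran\Phi\subseteq\lproe(Y)$; and since a norm-one element of $\ell_\infty(X)$ lies within $\e$ of a simple function with $O(\e^{-2})$ values, and each $\chi_A$ has propagation $0$, coarse-likeness of $\Phi$ follows from the a priori weaker statement that for each $\e>0$ there is $s>0$ with $\Phi(\chi_A)$ being $\e$-$s$-approximable for all $A\subseteq X$ (passing from $\chi_A$'s to arbitrary contractions costs a controlled factor in $\e$, not in $s$). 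Suppose this fails for some $\e>0$.

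The heart of the argument is a recursive fusion yielding: a partition $X=\bigsqcup_k I_k$ into finite intervals with respect to a fixed enumeration, finite sets $D_k\subseteq I_k$, and finite sets $B_k,C_k\subseteq Y$ with $\partial(B_k,C_k)\to\infty$, such that
\[
A\cap I_k=D_k\ \Longrightarrow\ \|\chi_{B_k}\Phi(\chi_A)\chi_{C_k}\|\ge c \qquad(A\subseteq X)
\]
for a fixed $c>0$ depending only on $\e$. At step $k$, with $E_{k-1}=\bigsqcup_{j<k}I_j$ already built, I would use the following ingredients. First, $\chi_D\in M_\infty^p(X)$ for each of the finitely many $D\subseteq E_{k-1}$, so the lifting hypothesis forces $\Phi(\chi_D)\in M_\infty^p(Y)$; this allows one both to transfer the failure of $\e$-$s_k$-approximability (for $s_k$ as large as we like) from a witness $\chi_{A_0}$ to $\chi_A$ with $A=A_0\setminus E_{k-1}$, and to choose a finite $H_{k-1}\subseteq Y$ off which all these $\Phi(\chi_D)$ are negligible. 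Second, since $Y$ is u.l.f., the failure of $\tfrac\e2$-$s_k$-approximability of $\Phi(\chi_A)$ produces finite $B_k,C_k\subseteq Y$ with $\partial(B_k,C_k)>s_k$ and $\|\chi_{B_k}\Phi(\chi_A)\chi_{C_k}\|$ bounded below by a constant depending only on $\e$; shrinking one of them, we may take it disjoint from $H_{k-1}$. Third, Lemma \ref{L:AA} applied to $\chi_{B_k}$ (or $\chi_{C_k}$) provides a finite $F_k\supseteq E_{k-1}$ such that whatever of $A'$ lies outside $F_k$ contributes negligibly to $\chi_{B_k}\Phi(\chi_{A'})\chi_{C_k}$. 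Putting $I_k=F_k\setminus E_{k-1}$ and $D_k=A\cap I_k$, and splitting $\chi_{A'}=\chi_{A'\cap E_{k-1}}+\chi_{D_k}+\chi_{A'\setminus F_k}$, the three estimates give the displayed implication.

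To conclude, I would invoke Baire category: the set $G=\{A\subseteq X:A\cap I_k=D_k\text{ for infinitely many }k\}$ is comeager in $\cP(X)\cong 2^X$ (each condition ``$A\cap I_k=D_k$'' is clopen, and $\bigcup_{k\ge N}\{A:A\cap I_k=D_k\}$ is dense because only finitely many $I_k$ meet a given finite set), so it meets the nonmeager set $\SI$. For $A\in\SI\cap G$ we get $\|\chi_{B_k}\Phi(\chi_A)\chi_{C_k}\|\ge c$ for infinitely many $k$, with $\partial(B_k,C_k)\to\infty$; hence $\Phi(\chi_A)$ is not $\tfrac c2$-$s$-approximable for any $s$, so $\Phi(\chi_A)\notin\lproe(Y)$. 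But $A\in\SI$, so $\pi_Y(\Phi(\chi_A))=\Lambda(\pi_X(\chi_A))$ is only meaningful if $\Phi(\chi_A)\in\lproe(Y)$ — contradiction.

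The main obstacle is the recursion step: one has to interleave the transfer-and-negligibility argument for finite subsets of $X$, the u.l.f. extraction of a block bounded below by a constant \emph{independent of the scale} $s_k$, and the localization provided by Lemma \ref{L:AA} (for which the strong continuity of $\Phi$ is essential), all while keeping the $I_k$ an honest partition of $X$. Given this, the closing category argument is routine.
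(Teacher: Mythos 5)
Your outline founders on one step, and it is the load-bearing one: the claim that, ``since $Y$ is u.l.f.'', failure of $\tfrac{\e}{2}$-$s_k$-approximability of $\Phi(\chi_A)$ produces finite sets $B_k,C_k\subseteq Y$ with $\partial(B_k,C_k)>s_k$ and $\|\chi_{B_k}\Phi(\chi_A)\chi_{C_k}\|\geq c$ for a constant $c$ depending only on $\e$ and not on the scale $s_k$. The contrapositive of this claim says: if every block $\chi_B T\chi_C$ with $\partial(B,C)>s_k$ has norm $<c$, then $T$ is $\tfrac{\e}{2}$-$s_k$-approximable. Running this over all scales is precisely the quantitative form of the implication ``quasi-local $\Rightarrow$ member of $\lproe(Y)$''. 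As recalled after Definition~\ref{DefQuasiLocal}, that implication is known only when $p=1$ or $Y$ has property A (\cite[Theorem 3.3]{SpakulaZhang2018}) and is open for general u.l.f.\ spaces; uniform local finiteness alone does not give it, because the off-diagonal part of $T$ at distances $>s_k$ cannot be norm-controlled by the suprema of its far rectangles. Since Proposition~\ref{prop:stayinside} assumes no property A --- it is invoked exactly in case~\eqref{ItemThmEmbImpliesStrongEmbCompPres} of Theorem~\ref{ThmEmbImpliesStrongEmb}, the case designed to bypass quasi-locality --- your fusion cannot be salvaged by adding that hypothesis without defeating the purpose of the statement.

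The rest of your scheme is sound and close to the intended argument: reducing to characteristic functions, using the lifting hypothesis on an ideal containing the finite sets to get $\Phi(\chi_D)\in M_\infty^p(Y)$ for finite $D$, invoking Lemma~\ref{L:AA} to make tails negligible against a fixed $\chi_{B_k}$, and the closing Baire-category step (a comeager set meeting the nonmeager $\SI$, with the lifting equation forcing $\Phi(\chi_A)\in\lproe(Y)$ for $A\in\SI$). What must change is \emph{what} you fuse. Following \cite[Proposition 3.3]{BragaFarahVignati2018}, one works directly with the approximability sets: for fixed $\e$, the sets $F_s=\{A\subseteq X\mid \Phi(\chi_A)\text{ is }\e\text{-}s\text{-approximable}\}$ are closed in $\cP(X)$ (strong continuity and boundedness of $\Phi$, together with weak/weak$^*$ compactness of bounded subsets of $\cB(\ell_p(Y))$, since the propagation constraint and the norm bound pass to such limits), and $\SI\subseteq\bigcup_s F_s$ by the lifting hypothesis; nonmeagerness of $\SI$ then forces some $F_{s_0}$ to contain a cylinder $\{A\mid A\cap E=D\}$ with $E$ finite, and the finitely many corrections $\Phi(\chi_{D'})\in M_\infty^p(Y)$, $D'\subseteq E$, upgrade this to uniform approximability for all $A\subseteq X$, with Lemma~\ref{L:AA} playing the role of \cite[Lemma 3.4]{BragaFarahVignati2018} where tails over infinite sets must be controlled. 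Your reduction from $\chi_A$'s to general contractions and the conclusion $\ran\Phi\subseteq\lproe(Y)$ then go through as you wrote.
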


The next result shows that we can safely assume that the map in Theorem~\ref{thm:generalembedding} is strongly continuous; compare it to \cite[Theorem 4.3]{BragaFarahVignati2019}.

\begin{theorem}\label{ThmEmbImpliesStrongEmb}
Let $p\in [1,\infty)$,    $X$ and $Y$ be  u.l.f. metric spaces,  and  $\Phi\colon \lproe(X)\to \lproe(Y)$ be an injective bounded homomorphism. Assume that one of the following holds:
\begin{enumerate}
\item \label{ItemThmEmbImpliesStrongEmbP1} $p=1$,
\item \label{ItemThmEmbImpliesStrongEmbProA} $Y$ has property A, or
\item\label{ItemThmEmbImpliesStrongEmbCompPres} $\Phi$ is compact preserving.
\end{enumerate} 
 Then, there exists an idempotent $P\in \lproe(Y)$  in the commutator of $\Phi(\lproe(X))$ such that  
 \[
 a\in \lproe(X)\mapsto P\Phi(a)P\in \lproe(Y)
 \] 
is a strongly continuous nonzero homomorphism.
\end{theorem}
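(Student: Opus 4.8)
The goal is to manufacture, from an arbitrary injective bounded homomorphism $\Phi$, a corner $P\Phi(\cdot)P$ that is strongly continuous. The natural candidate for $P$ is $\Phi$ applied to something like the identity $1\in\ell_\infty(X)\subseteq\lproe(X)$, but $\Phi$ need not send $1$ to $1$, nor be unital, nor send increasing approximate units to something converging strongly. The strategy is therefore to restrict attention to the commutative subalgebra $\ell_\infty(X)\subseteq\lproe(X)$ and decompose the representation $\Phi\rs\ell_\infty(X)$ into a strongly continuous part and a ``singular'' part. Concretely, I would first consider the restriction $\Psi=\Phi\rs\ell_\infty(X)\colon\ell_\infty(X)\to\lproe(Y)\subseteq\cB(\ell_p(Y))$. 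Since $\ell_\infty(X)$ has the form $C(\beta X)$ when $X$ is countable (and $X$ is countable because it is u.l.f.), there is a well-developed theory of which idempotents in $\ell_\infty(X)$, viewed as characteristic functions $\chi_A$, get sent by $\Psi$ to ``large'' (non-compact) idempotents. The plan is: let $\SI$ be the collection of $A\subseteq X$ such that $n\mapsto\Psi(\chi_{A_n})$ converges strongly as $A_n\uparrow A$ along some/any enumeration — equivalently, such that the corner of $\Psi$ supported on $A$ is strongly continuous. One checks $\SI$ is an ideal in $\cP(X)$ containing $\Fin(X)$.

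The key dichotomy is whether $\SI$ is meager or nonmeager (in $\cP(X)\cong 2^X$ with its usual Polish topology). If $\SI$ is nonmeager, then by a Baire-category / Jalali-Naini--Talagrand type argument it contains a set $A$ whose complement is small in the relevant sense; more usefully, one uses that a nonmeager ideal containing the finite sets behaves, on a cofinite-like piece, well enough to apply Proposition~\ref{prop:stayinside}. Taking $P=\Psi(\chi_A)=\mathrm{SOT}\text{-}\lim\Psi(\chi_{A_n})$ for such an $A\in\SI$, the map $a\mapsto P\Phi(a)P$ is then strongly continuous on the corner $\chi_A\lproe(X)\chi_A$, but one wants it on all of $\lproe(X)$; here one uses injectivity of $\Phi$ together with the fact that if $X\setminus A$ is ``negligible'' then $\chi_A\lproe(X)\chi_A$ is large enough — actually the cleaner route is to show $\SI$ must in fact be \emph{improper}, i.e. $X\in\SI$, so $P=\Phi(1)$ works and $P\Phi(\cdot)P=\Phi(1)\Phi(\cdot)\Phi(1)$ is strongly continuous outright, using that $P$ commutes with $\Phi(\ell_\infty(X))$ and that idempotents $\Phi(e_{xx})$ sit below $P$ in the limit. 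The three alternative hypotheses \eqref{ItemThmEmbImpliesStrongEmbP1}, \eqref{ItemThmEmbImpliesStrongEmbProA}, \eqref{ItemThmEmbImpliesStrongEmbCompPres} enter precisely to rule out the meager/singular alternative: if $\SI$ were meager (equivalently, if a ``singular'' mass existed), one produces a ghost idempotent or a quasi-local operator outside $\lproe(Y)$ — contradiction under property A via Proposition~\ref{PropPropAImpliesGhostsComp}; under $p=1$ via the automatic quasi-local$=$Roe fact cited after Definition~\ref{DefQuasiLocal}; and under compact-preservation via Lemma~\ref{LemmaPickMapf}, which exactly says $\Phi(e_{xx})$ cannot be uniformly small and hence the singular part cannot absorb everything.

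In more detail, the steps in order: (1) reduce to countable $X$ and set $\Psi=\Phi\rs\ell_\infty(X)$; (2) define the ideal $\SI$ of ``strongly-continuously-liftable'' subsets and verify it is an ideal containing all finite sets; (3) show that if $\SI$ is nonmeager then Proposition~\ref{prop:stayinside} applies to give a strongly continuous corner whose supporting idempotent $P$ we can take in the relative commutant of $\Phi(\lproe(X))$ — one gets commutation because $P$ is a strong limit of $\Phi(\chi_{A_n})$ and these asymptotically commute with each $\Phi(a)$ for $a$ of finite propagation, using coarse-likeness of the corner and u.l.f.; (4) show $\SI$ cannot be meager: assuming it is, extract via Lemma~\ref{LemmaSumOfQuasiLocalSOT} or a direct diagonalization a sequence of idempotents $\Phi(\chi_{E_n})$ with disjoint $E_n$ witnessing singular mass, assemble $Q=\mathrm{SOT}\text{-}\sum\Phi(\chi_{E_n})$, and derive that $Q$ (or a sub-idempotent) is a ghost idempotent or a non-Roe quasi-local idempotent in $\lproe(Y)$, contradicting the relevant one of \eqref{ItemThmEmbImpliesStrongEmbP1}--\eqref{ItemThmEmbImpliesStrongEmbCompPres}; (5) conclude $X\in\SI$, so $P=\mathrm{SOT}\text{-}\lim_k\Phi\big(\sum_{x\in F_k}e_{xx}\big)=\mathrm{SOT}\text{-}\lim\Phi(\chi_{F_k})$ exists along any exhaustion $F_k\uparrow X$; then $a\mapsto P\Phi(a)P$ is strongly continuous because on a norm-bounded net $a_\lambda\to a$ strongly with $a_\lambda$ of finite propagation, $P\Phi(a_\lambda)P\to P\Phi(a)P$ strongly (finite-propagation approximants of $P$ localize the computation), it is nonzero by injectivity of $\Phi$, and $P$ lies in the commutant because it is a strong limit of elements $\Phi(\chi_{F_k})$ that commute with $\Phi(\ell_\infty(X))$ and asymptotically with everything of finite propagation. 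The main obstacle I anticipate is step (4): making the ``singular mass produces a bad idempotent in $\lproe(Y)$'' argument fully rigorous, since one must simultaneously control (a) that the SOT-sum $Q$ actually lands in $\lproe(Y)$ (this is where quasi-locality via Lemma~\ref{LemmaSumOfQuasiLocalSOT} and Proposition~\ref{prop:stayinside} are needed, and why the hypotheses on $Y$/$p$ cannot be dropped), and (b) that $Q$ is genuinely ghost-like, i.e. its matrix coefficients $e_{zz}Qe_{yy}$ go to $0$ off a finite set — this requires that each $\Phi(\chi_{E_n})$ is itself ghost-like (true under compact-preservation, but under property A or $p=1$ one instead argues via quasi-locality that $Q\notin\lproe(Y)$ directly). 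Handling the three cases uniformly versus case-by-case is the delicate bookkeeping, but in each case the contradiction is with a cited structural fact about $\lproe(Y)$.
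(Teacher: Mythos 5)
Your plan has the logical architecture backwards, and two of its pivotal steps are not viable. First, the existence of the idempotent is not where the hypotheses \eqref{ItemThmEmbImpliesStrongEmbP1}--\eqref{ItemThmEmbImpliesStrongEmbCompPres} enter, and no category dichotomy is needed for it: the paper shows that for \emph{any} exhaustion $X=\bigcup_n X_n$ by finite sets, $(\Phi(\chi_{X_n}))_n$ is SOT-convergent, by a purely Banach-space argument --- if $\|\Phi(\chi_{E_n})\xi\|\geq\eps$ along disjoint finite $E_n$, then cotype $\max\{2,p\}$ of $\ell_p(Y)$ together with $\|\Phi(\sum_i\eps_i\chi_{E_i})\|\leq\|\Phi\|$ for all signs forces a contradiction. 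This argument (and the purely algebraic verification that $P=\mathrm{SOT}\text{-}\lim_n\Phi(\chi_{X_n})$ commutes with $\Phi[\lproe(X)]$ and that $P\Phi(\cdot)P$ is strongly continuous and nonzero) uses none of \eqref{ItemThmEmbImpliesStrongEmbP1}--\eqref{ItemThmEmbImpliesStrongEmbCompPres}; your proposal never supplies this step, and your ideal $\SI$ does not substitute for it, since SOT-convergence of $\Phi(\chi_{A_n})$ says nothing about the limit being $\Phi(\chi_A)$. Worse, your ``cleaner route'' of proving $X\in\SI$ and taking $P=\Phi(1)$ cannot work: since $\Phi$ is a homomorphism, $\Phi(1)\Phi(a)\Phi(1)=\Phi(a)$, so that route amounts to proving $\Phi$ itself is strongly continuous, which is strictly stronger than the theorem and is exactly what one cannot expect; the whole point is that $P$ may be strictly below $\Phi(1)$, the corner discarding a possible singular part rather than showing it vanishes.

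Second, the three hypotheses are used only to prove that $P$ lies in $\lproe(Y)$ (which the statement requires), not to exclude a ``meager alternative'' in the construction. In cases \eqref{ItemThmEmbImpliesStrongEmbP1} and \eqref{ItemThmEmbImpliesStrongEmbProA} the paper argues: if $P\notin\lproe(Y)$ then $P$ is not quasi-local (by \cite[Theorem 3.3]{SpakulaZhang2018}), one extracts disjoint finite $E_n$ with $\Phi^P(\chi_{E_n})$ not $\eps$-$n$-quasi-local, and strong continuity of the corner contradicts Lemma \ref{LemmaSumOfQuasiLocalSOT} --- close in spirit to your step (4), but applied to $P$ itself, not to a ghost idempotent. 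In case \eqref{ItemThmEmbImpliesStrongEmbCompPres} your appeal to Lemma \ref{LemmaPickMapf} is inapplicable and circular: that lemma presupposes both strong continuity of the map and the hypothesis that all ghost idempotents of $\lproe(Y)$ are compact, neither of which is available in Theorem \ref{ThmEmbImpliesStrongEmb}. What the paper actually does there is consider the ideal $\SJ=\{A\subseteq X\mid\Phi(\chi_A)=\Phi^P(\chi_A)\}$, show it is ccc/Fin (hence nonmeager and containing the finite sets), and apply Proposition \ref{prop:stayinside} to the already-constructed strongly continuous lift $\Phi^P\restriction\ell_\infty(X)$ of the induced corona map to conclude $P=\Phi^P(1)\in\lproe(Y)$. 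So the nonmeager-ideal machinery you want to use at the outset in fact enters only at this last stage, and only in case \eqref{ItemThmEmbImpliesStrongEmbCompPres}.
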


\begin{proof}
 Let $(X_n)_n$ be an increasing sequence of finite subsets of $X$ with $\bigcup_nX_n=X$.
\begin{claim} 
The sequence $(\Phi(\chi_{X_n}))_n$ converges in the strong operator topology to an idempotent of norm at most $\|\Phi\|$.
\end{claim}
 
\begin{proof}
We only show that $(\Phi(\chi_{X_n}))_n$ converges in the strong operator topology, the other claims are trivial. For that, it is enough to show that $(\Phi(\chi_{X_n})\xi)_n$ is Cauchy for all $\xi\in\ell_p(Y)$. Suppose this fails. Then there exist $\eps>0$, $\xi\in \ell_p(Y)$ and a sequence $(E_n)_n$ of disjoint finite subsets of $X$ so that $\|\Phi(\chi_{E_n})\xi\|\geq \eps$ for all $n\in\N$. Since $\ell_p(Y)$ has cotype $\max\{2,p\}$,  proceeding as in Lemma \ref{LemmaCloseAndUnfFiniteToOne}, there exists $c>0$ so that 
\begin{align*}
\|\Phi\|^{\max\{2,p\}}&\geq \frac{1}{2^n}\sum_{\eps_i\in \{-1,1\}^n}\Big\|\sum_{i=1}^n\eps_i\Phi(\chi_{E_i})\xi\Big\|^{\max\{2,p\}}\\
&\geq c\sum_{i=1}^n\|\Phi(\chi_{E_i})\xi\|^{\max\{2,p\}}\\
&\geq c\eps^{\max\{2,p\}}n
\end{align*}
for all $n\in\N$; contradiction.
\end{proof}

Let  $P=\mathrm{SOT}\text{-}\lim_n\Phi(\chi_{X_n})$. Note that  $P$ commutes with $\Phi[M^p(X_k)]$ for all $k$. As $\bigcup_nM^p(X_k)$ is dense in $M_\infty^p$, $P$ commutes with $\Phi[M_\infty^p]$. More than that, $P$ commutes with $\Phi[\lproe(X)]$: let $a\in \lproe(X)$. Then  $\Phi(a\chi_{X_n})P=P\Phi(a\chi_{X_n})$ for all $n\in\N$, and  it follows that 
\[
\Phi(a)P=\Phi(a)P^2=\mathrm{SOT}\text{-}\lim_n\Phi(a\chi_{X_n})P=\mathrm{SOT}\text{-}\lim_nP\Phi (a\chi_{X_n})=P\Phi(a)P.
\]
Analogously $P\Phi(a)P=\Phi(a)P$. In particular, the map $\Phi^P=P\Phi P$ is a homomorphism, and, since $\Phi$ is injective, we have that $P\neq 0$ and so $\Phi^P\neq 0$.

\begin{claim}
$\Phi^P$ is strongly continuous.
\end{claim}

\begin{proof}
Let $a\in \lproe(X)$  and $(a_k)_k$ be a sequence in $\lproe(X)$ with $a=\mathrm{SOT}\text{-}\lim_ka_k$. Without loss of generality,   assume that $a$ and each $a_k$ are contractions. Let $H=\mathrm{Im}(P)$ and $H'=\mathrm{Im}(1-P)$. Since $\ell_p(Y)=H\oplus H'$ and $\Phi^P(\lproe(X))[H']=0$, we only need to show that 
\[
\Phi^P(a)\xi=\lim_k\Phi^P(a_k)\xi,
\] for all   $\xi\in H$. Fix $\xi\in H$ and  $\eps>0$. Pick a large enough $\ell\in \N$   so that $\|\Phi(\chi_{X_\ell})\xi-\xi\|<\eps$.
 Since $X_\ell$ is finite, $a\chi_{X_\ell}=\lim_ka_k\chi_{X_\ell}$, which implies that $\Phi^p(a \chi_{X_\ell})=\lim_k\Phi^p(a_k\chi_{X_\ell})$. Since $\Phi$ and $\Phi^p$ agree on $M_\infty^p$, we have that  
\[
\limsup_k\|\Phi^p(a_k)\xi-\Phi^p(a)\xi\|\leq \lim_k\|\Phi^p(a_k \chi_{X_\ell})\xi-\Phi^p(a \chi_{X_\ell})\xi\|+2\eps=2\eps.\qedhere
\]
\end{proof}

Since $\Phi^P\neq 0$, it follows from its strong continuity that $\Phi^P$ is injective. We are left to show that $P\in\lproe(Y)$ under the assumptions in the theorem.

\eqref{ItemThmEmbImpliesStrongEmbP1} and \eqref{ItemThmEmbImpliesStrongEmbProA}: Suppose $P=\Phi^P(1)\not\in \lproe(Y)$. In either $p=1$ or $Y$ has property A,  \cite[Theorem 3.3]{SpakulaZhang2018} implies that there exists $\eps>0$ so that $P$ is not $2\eps$-$s$-quasi-local for all $s>0$. The proof of the next claim goes exactly as the one of \cite[Claim 4.7]{BragaFarahVignati2019}.

\begin{claim}\label{ClaimClaim1}
For all finite  $E_0\subseteq X$ and all $s>0$ there exists a finite $E\subseteq X\setminus E_0$ such that $\Phi^P(\chi_{ E})$ is not $\eps$-$s$-quasi-local.\qed
\end{claim}

By the previous claim,  pick a disjoint sequence $(E_n)_n$ of finite subset of $X$ such that $(\Phi^p(\chi_{E_n}))_n$ is not $\eps$-$n$-quasi-local for all $n\in\N$. Since $\Phi^P$ is strongly continuous, this contradicts the conclusion of  Lemma \ref{LemmaSumOfQuasiLocalSOT}.

\eqref{ItemThmEmbImpliesStrongEmbCompPres}: Since $\Phi[M_\infty^p(X)]\subseteq M_\infty^p(Y)$,  $\Phi$ induces a  homomorphism 
\[
\tilde\Phi\colon \mathrm{Q}^p_u(X)\to \mathrm{Q}^p_u(Y).
\]
 Let $\SJ=\{A\subseteq X\mid \Phi(\chi_A)=\Phi^{P}(\chi_A)\}$, so $\Phi^P\restriction \ell_\infty(X)$ lifts $\tilde \Phi\restriction \ell_\infty(X)/c_0(X)$ on $\SJ$. Proceeding as in  \cite[Claim 4.6]{BragaFarahVignati2019}, we have that 
is a ccc/Fin ideal\footnote{An ideal $\SI\subseteq\mathcal P(X)$ is ccc/Fin if every family  $(A_i)_{i\in I}$ of subsets of $X$ so that $|A_i\cap A_j|<\infty$ and $A_i\not\in \SI$ for all $i\in I$ is countable.}, and in particular is nonmeager and contains all finite subsets of $X$. By \cite[Proposition 3.3]{BragaFarahVignati2018},   $\Phi^P\restriction \ell_\infty(X)$ is then coarse-like and the image of $\Phi^P$  is contained in $\lproe(Y)$. In particular,  $P=\Phi^P(1)\in \lproe(Y)$.
\end{proof}

We can finally present the proof of Theorem \ref{thm:generalembedding} (cf. \cite[Theorem 1.2(2)]{BragaFarahVignati2019}).

\begin{proof}[Proof of Theorem \ref{thm:generalembedding}]
Let $\Phi\colon \lproe(X)\to \lproe(Y)$ be an injective   compact preserving bounded homomorphism. By Theorem \ref{ThmEmbImpliesStrongEmb}, we can also assume that $\Phi$ is strongly continuous. Let $\delta>0$, and  $f\colon X\to Y$ and $g\colon X\to Y$ be given by Lemma \ref{LemmaPickMapf}, i.e., 
\[\delta=\inf_{x\in X}\|e_{g(x)g(x)}\Phi(e_{xx})e_{f(x)f(x)}\|>0.\]
 By  Lemma \ref{LemmaCloseAndUnfFiniteToOne}, $f$ and $g$ are close to each other and uniformly finite-to-one. Let $m$ be such that $\partial (f(x),g(x))\leq m$ for all $x\in X$.

Fix  $x_0\in X$. Since $\Phi$ is  compact preserving, $\text{Im}(\Phi(e_{x_0x_0}))$ is finite dimensional. By our choice of $\delta $ and $f$, $\|\Phi(e_{xx})\delta_{f(x)}\|\geq \delta$ for all $n\in\N$. Notice that $\Phi(e_{xx_0})\restriction\text{Im}(\Phi(e_{xx}))$ is an isomorphism onto $\text{Im}(\Phi(e_{x_0x_0}))$, with inverse $\Phi(e_{x_0x})\restriction\text{Im}(\Phi(e_{x_0x_0}))$. In particular,
\[
\|\Phi\|^{-1}\cdot \|\Phi(e_{xx})\delta_{f(x)}\|\leq \|\Phi(e_{xx_0})\delta_{f(x)}\|\leq \|\Phi\| 
\]
for all $x\in X$. Hence
\[\Phi(e_{xx_0})\delta_{f(x)}\in D=\big\{\xi\in \ell_p(Y)\mid  \|\xi\|\in [\|\Phi\|^{-1}\delta,\|\Phi\|]\big\}\cap \text{Im}(\Phi(e_{x_0x_0}))\]
for all $x\in X$. Since $D$ is compact there is a finite partition $D=\bigsqcup_{n=1}^kD_n$ such that $\diam(D_n)<\delta/(2\|\Phi\|)$ for all $n\leq k$. Let
\[
X_n=\{x\in X\mid \Phi(e_{xx_0})\delta_{f(x)}\in D_n\}.
\]
Clearly, $X=\bigsqcup_{n=1}^k X_n$.  If $x_1,x_2\in X_n$, it follows that
\begin{align*}
\|e_{g(x_2)g(x_2)}&\Phi(e_{x_1x_2})e_{f(x_1)f(x_1)}-e_{g(x_2)g(x_2)}\Phi(e_{x_2x_2})e_{f(x_2)f(x_2)}\|\\
&\leq \|\Phi(e_{x_1x_2})e_{f(x_1)f(x_1)}-\Phi(e_{x_2x_2})e_{f(x_2)f(x_2)}\|\\
&=\|\Phi(e_{x_0x_2})\Phi(e_{x_1x_0})e_{f(x_1)f(x_1)}-\Phi(e_{x_0x_2})\Phi(e_{x_2x_0})e_{f(x_2)f(x_2)}\|\\
&\leq \|\Phi\|\cdot\|\Phi(e_{x_1x_0})\delta_{f(x_1)}-\Phi(e_{x_2x_0})\delta_{f(x_2)}\|\leq\delta/2.
\end{align*}
By our choice of $f$ and $g$, this implies that  
\begin{equation}\label{Equation1}
\|e_{g(x_2)g(x_2)}\Phi(e_{x_1x_2})e_{f(x_1)f(x_1)}\|\geq \delta/2.
\end{equation}

\begin{claim}
For each $n\leq k$, the map $f\restriction X_n$ is coarse.
\end{claim}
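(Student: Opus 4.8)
The plan is to fix $r>0$ and produce $s>0$ so that $x_1,x_2\in X_n$ with $d(x_1,x_2)\leq r$ force $\partial(f(x_1),f(x_2))\leq s$. The whole point of having passed to the pieces $X_n$ is the estimate \eqref{Equation1}: for \emph{any} $x_1,x_2$ in the same piece, the matrix coefficient $e_{g(x_2)g(x_2)}\Phi(e_{x_1x_2})e_{f(x_1)f(x_1)}$ has norm at least $\delta/2$. On its own this does not localize $f(x_1)$ near $g(x_2)$, because a priori $\Phi(e_{x_1x_2})$ could have large propagation; the role of the coarse-like hypothesis is exactly to rule this out once $d(x_1,x_2)$ is bounded.

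So the key step is to apply coarse-likeness of $\Phi$. Since $\Phi$ is a compact preserving strongly continuous homomorphism, it is coarse-like by Proposition~\ref{PropCoarseLike}; applying the definition to $\eps=\delta/4$ and the given $r$ yields $s_0>0$ such that every $a\in\lproe(X)$ with $\|a\|\leq 1$ and $\propg(a)\leq r$ has $(\delta/4)$-$s_0$-approximable image. For $x_1,x_2\in X_n$ with $d(x_1,x_2)\leq r$, the rank-one operator $e_{x_1x_2}$ is a contraction on $\ell_p(X)$ with $\propg(e_{x_1x_2})\leq d(x_1,x_2)\leq r$, so there is $b\in\cB(\ell_p(Y))$ with $\propg(b)\leq s_0$ and $\|\Phi(e_{x_1x_2})-b\|\leq\delta/4$.

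Then I combine this with \eqref{Equation1} via the triangle inequality: $\|e_{g(x_2)g(x_2)}\,b\,e_{f(x_1)f(x_1)}\|\geq\delta/2-\delta/4=\delta/4>0$, and since $\propg(b)\leq s_0$ (equivalently, $\|e_{zz}be_{yy}\|\neq 0$ implies $\partial(z,y)\leq s_0$) this forces $\partial(g(x_2),f(x_1))\leq s_0$. Finally, using that $f$ and $g$ are close — $\partial(f(x),g(x))\leq m$ for all $x$, as fixed just before the claim — one more triangle inequality gives $\partial(f(x_1),f(x_2))\leq s_0+m$, so $s=s_0+m$ works. I do not expect a genuine obstacle here: the argument is a short combination of \eqref{Equation1}, coarse-likeness, and closeness of $f$ and $g$; the only points requiring minor care are checking that $e_{x_1x_2}$ is a contraction of propagation $\leq d(x_1,x_2)$ so that coarse-likeness genuinely applies, and the bookkeeping of the constants $\delta$, $s_0$, $m$.
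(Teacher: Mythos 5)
Your argument is correct and is essentially the paper's own proof: apply coarse-likeness (Proposition~\ref{PropCoarseLike}) with $\eps=\delta/4$ to the contraction $e_{x_1x_2}$ of propagation $\leq r$, combine the resulting approximation with \eqref{Equation1} to force $\partial(g(x_2),f(x_1))\leq s$, and conclude via closeness of $f$ and $g$ that $\partial(f(x_1),f(x_2))\leq s+m$. The only cosmetic difference is that you exhibit the finite-propagation approximant $b$ explicitly, whereas the paper phrases the same step as the bound \eqref{TagAst2} on matrix coefficients at distance $>s$.
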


\begin{proof}
Let $r>0$. By Proposition \ref{PropCoarseLike}, there exists $s>0$ such that $\Phi(e_{x_1x_2})$ is $\delta/4$-$s$-approximated for all $x_1,x_2\in X$ with $d(x_1,x_2)<r$. In particular, for all $x_1,x_2\in X$ with $d(x_1,x_2)<r$, it follows that
\begin{equation}\label{TagAst2}
\|e_{y_2y_2}\Phi(e_{x_1x_2})e_{y_1y_1}\|\leq\frac{\delta}{4}\tag{$*$}
\end{equation}
 for all $y_1,y_2\in Y$ with $\partial (y_1,y_2)>s$.  Hence, \eqref{Equation1} implies that \[\partial (f(x_1),f(x_2))\leq \partial(f(x_1),g(x_2))+\partial(g(x_2),f(x_2))\leq s+m\] for all $x_1,x_2\in X_n$ with $d(x_1,x_2)<r$.
\end{proof}
Since $f$ is uniformly finite-to-one, by splitting each $X_n$ into finite many pieces if necessary, we can assume that $f\restriction X_n$ is injective.
\end{proof}

\subsection{Maps onto hereditary subalgebras}\label{subsec:her}
Recall that if $B\subseteq A$ are Banach algebras, we say that $B$ is \emph{hereditary} if  $BAB\subseteq B$.

\begin{lemma}\label{LemmaPhiStronglyContAndU}\label{We need an embedding here}
Let $p\in[1,\infty)$, and $X$ and $Y$ be  u.l.f. metric spaces. Let $\Phi\colon \lproe(X)\to\lproe(Y)$ be an   embedding onto a hereditary Banach subalgebra of $\lproe(Y)$. Then  there exists a surjective  isomorphism $U\colon \ell_p(X)\to  \mathrm{Im}(\Phi(1))$  with $\norm{\Phi}=\norm{U}$ and such that 
\[
\Phi(a)=UaU^{-1}\Phi(1)
\]
for all $a\in \lproe(X)$. $\Phi$ is then strongly continuous and compact preserving. 
\end{lemma}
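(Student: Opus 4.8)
The plan is to construct $U$ by hand from the images of the matrix units, using the hereditary hypothesis in exactly two places: to see that $\Phi(e_{x_0x_0})$ has rank one, and to see that the resulting intertwiner is onto $\mathrm{Im}(\Phi(1))$. First, fix $x_0\in X$ (if $X=\emptyset$ there is nothing to do) and write $B=\mathrm{Im}(\Phi)$, $P=\Phi(1)$, $P_{x_0}=\Phi(e_{x_0x_0})$. Then $P$ is an idempotent which is a two-sided identity for $B$, $\Phi(a)=P\Phi(a)=\Phi(a)P$ for all $a$ (so each $\Phi(a)$ kills $\ker P$ and has range in $\mathrm{Im}(P)$), and $\Phi^{-1}\colon B\to\lproe(X)$ is bounded by the open mapping theorem. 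Since $e_{x_0x_0}\lproe(X)e_{x_0x_0}=\C e_{x_0x_0}$ we get $P_{x_0}BP_{x_0}=\C P_{x_0}$, while heredity gives $P_{x_0}\lproe(Y)P_{x_0}\subseteq B$; compressing by $P_{x_0}$ yields $P_{x_0}\lproe(Y)P_{x_0}=\C P_{x_0}$. Because $\lproe(Y)$ contains every $e_{uv}$ and $P_{x_0}e_{uv}P_{x_0}=(P_{x_0}\delta_v)\otimes(\delta_u^*P_{x_0})$ (with $\delta_u^*$ the coordinate functional at $u$), if $\mathrm{Im}(P_{x_0})$ had dimension $\geq 2$ one could pick $u,v_1,v_2$ producing two linearly independent elements of $\C P_{x_0}$, a contradiction; hence $P_{x_0}$ is a rank-one idempotent, $P_{x_0}=\eta_{x_0}\otimes g_0$ with $g_0(\eta_{x_0})=1$.

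Next, set $\eta_x=\Phi(e_{x_0x})\eta_{x_0}$ and let $V\colon\ell_p(X)\to\ell_p(Y)$ be the linear map with $V\delta_x=\eta_x$. From the matrix-unit relations $e_{ab}e_{cd}=\delta_{ad}e_{cb}$ one gets $\Phi(e_{uv})\eta_w=\delta_{uw}\eta_v$, hence $\Phi(a)V=Va$ for $a$ in the norm-dense subalgebra $\bigcup_\lambda M^p(X_\lambda)=\Span\{e_{uv}\}$ of $M_\infty^p(X)$, and so for all $a\in M_\infty^p(X)$. The point that avoids assuming $\Phi$ strongly continuous: for general $a\in\lproe(X)$ one has $ae_{x_0x}=(a\delta_x)\otimes\delta_{x_0}^*\in M_\infty^p(X)$, so $\Phi(a)\eta_x=\Phi(ae_{x_0x})\eta_{x_0}=V(a\delta_x)$; expanding $\xi=\sum_x\xi(x)\delta_x$ and using continuity of $\Phi(a)$ and of $V$ gives $\Phi(a)V\xi=\sum_x\xi(x)V(a\delta_x)=V(a\xi)$, i.e.
\[
\Phi(a)V=Va\qquad\text{for every }a\in\lproe(X).
\]
Testing on rank-one operators controls $V$: for $\zeta\in\ell_p(X)$, $\zeta\otimes\delta_{x_0}^*=\sum_z\zeta(z)e_{x_0z}\in M_\infty^p(X)$ has norm $\|\zeta\|_p$ and $\Phi(\zeta\otimes\delta_{x_0}^*)=(V\zeta)\otimes g_0$, so $\|V\zeta\|\,\|g_0\|_q=\|\Phi(\zeta\otimes\delta_{x_0}^*)\|$ lies between $\|\Phi^{-1}\|^{-1}\|\zeta\|_p$ and $\|\Phi\|\,\|\zeta\|_p$; thus $V$ is a bounded isomorphism onto its closed range.

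Finally, I would identify $\mathrm{Im}(V)$ with $\mathrm{Im}(P)$. One inclusion is immediate since $\eta_x\in\mathrm{Im}(\Phi(e_{x_0x}))\subseteq\mathrm{Im}(P)$ (and $P\eta_{x_0}=\eta_{x_0}$). For the other, heredity gives $PSP\in B$ for every $S\in\lproe(Y)$, say $PSP=\Phi(b)$, whence $PS\eta_{x_0}=PSP\eta_{x_0}=\Phi(b)V\delta_{x_0}=V(b\delta_{x_0})\in\mathrm{Im}(V)$; since $\eta_{x_0}$ has a nonzero coordinate $\eta_{x_0}(u_0)$, the identities $e_{u_0v}\eta_{x_0}=\eta_{x_0}(u_0)\delta_v$ show that $\{S\eta_{x_0}:S\in\lproe(Y)\}$ is dense in $\ell_p(Y)$, so $\mathrm{Im}(P)=P\ell_p(Y)\subseteq\overline{\mathrm{Im}(V)}=\mathrm{Im}(V)$. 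Hence $V\colon\ell_p(X)\to\mathrm{Im}(\Phi(1))$ is a surjective isomorphism; replacing $V$ by $U=(\|\Phi\|/\|V\|)V$ we get $\|U\|=\|\Phi\|$, and since $\Phi(a)$ annihilates $\ker P$ and coincides with $VaV^{-1}$ on $\mathrm{Im}(P)=\mathrm{Im}(V)$, the formula $\Phi(a)=UaU^{-1}\Phi(1)$ follows. Strong continuity of $\Phi$ is then immediate by evaluating this formula at a fixed vector, and compact preservation follows because each $\Phi(e_{uv})=\eta_v\otimes(g_0\circ\Phi(e_{ux_0}))$ is rank one and lies in $\lproe(Y)$ — whose elements have rows in $c_0$ — forcing $\Phi(e_{uv})\in M_\infty^p(Y)$, hence $\Phi[M_\infty^p(X)]\subseteq M_\infty^p(Y)$.

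The only genuinely nontrivial inputs are the two uses of heredity — rank-oneness of $\Phi(e_{x_0x_0})$, and the exhaustion $\mathrm{Im}(V)=\mathrm{Im}(\Phi(1))$; without heredity $V$ would only land in some closed $\Phi(\lproe(X))$-invariant subspace, possibly proper. The rest is bookkeeping, the most delicate piece being the passage of $\Phi(a)V=Va$ from matrix units to all of $\lproe(X)$ with no a priori regularity of $\Phi$.
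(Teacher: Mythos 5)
Your proof is correct and follows essentially the same route as the paper: the intertwiner is the same map (your $V\delta_x=\Phi(e_{x_0x})\eta_{x_0}$ is, up to a scalar, the paper's $U\xi=\Phi(\xi\otimes\delta_{x_0})\zeta$), and heredity enters in the same two places, namely to make $\Phi(e_{x_0x_0})$ rank one (the paper pulls back a strictly smaller idempotent, you use the corner identity $P_{x_0}\lproe(Y)P_{x_0}=\C P_{x_0}$) and to identify $\mathrm{Im}(V)$ with $\mathrm{Im}(\Phi(1))$ (the paper exhibits the explicit inverse $\eta\mapsto\Phi^{-1}(\eta\otimes f)\delta_{x_0}$, you argue by density of $\{S\eta_{x_0}\mid S\in\lproe(Y)\}$). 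The only adjustment needed is presentational: establish the boundedness of $V$ first, via $\Phi(\zeta\otimes\delta_{x_0}^*)=(V\zeta)\otimes g_0$ for finitely supported $\zeta$, before invoking continuity of $V$ to extend the relation $\Phi(a)V=Va$ from matrix units to $M_\infty^p(X)$ and then to all of $\lproe(X)$.
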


\begin{proof}
First, note that $\Phi(e_{xx})$ has rank 1 for all $x\in X$. If not, since $\Phi(e_{xx})\in \lproe(Y)$, there exists a nontrivial idempotent  $Q\in \lproe(Y)$   strictly below $\Phi(e_{xx})$, for some $x\in X$. Indeed, one can take for instance $Q=\xi\otimes f$, where $\xi$ is a unit vector in $\mathrm{Im}(\Phi(e_{xx}))$ and $f=g\circ\Phi(e_{xx})$ for some $g\in \ell_p(Y)^*$ with finite support so that $g(\xi)=1$. Since $g$ has finite support and $\Phi(e_{xx})$ belongs to $\lproe(Y)$, it easily follows that $Q\in \lproe(Y)$. Since the image of $\Phi$ is hereditary, there is an idempotent $R\neq e_{xx}$ such that $\Phi(R)=Q$, hence $Re_{xx}=e_{xx}R=R$, a contradiction.

Fix $x_0\in X$, and pick a unit vector $\zeta\in\ell_p(Y)$ and $f\in \ell_p(Y)^*$ such that $\Phi(e_{x_0x_0})=\zeta\otimes f$ and $\|f\|=\|\Phi\|$.   Define $U\colon\ell_p(X)\to\ell_p(Y)$ by 
\[
U\xi=\Phi(\xi\otimes \delta_{x_0})\zeta\text{ for all }\xi\in \ell_p(X).
\]
Note that $\norm{U}\leq \norm{\Phi}$. We claim that $U$ is an isomorphism onto $\mathrm{Im}(P)$, where $P=\Phi(1)$. For that, define an operator $V\colon\mathrm{Im}(P)\to \ell_p(X)$ by letting \[V\eta=\Phi^{-1}(\eta\otimes f)\delta_{x_0}\text{ for all }\eta\in \mathrm{Im}(P).\]
In order to show that $V$ is well-defined, note that $\eta\otimes f\in \mathrm{Im}(\Phi)$ for all $\eta\in\mathrm{Im}(P)$. Indeed, this follows since  $\Phi(\lproe(X))$ is hereditary in $\lproe(Y)$ and $f(\zeta)\eta\otimes f=P(\eta\otimes f)(\zeta\otimes f)$.  Moreover, since $\Phi(\xi\otimes \delta_{x_0})\zeta=P\Phi(\xi\otimes \delta_{x_0})\zeta$, it follows that $\mathrm{Im}(U)\subset \mathrm{Dom}(V)$. It is straightforward to check that  $V=U^{-1}$ and $\|V\|\leq \|\Phi^{-1}\|$. In particular, if $\Phi$ is an isometry, so is $U$.

We are left to show that $\Phi(a)=UaU^{-1}P$ for all $a\in \lproe(X)$.	For each $\xi\in \ell_p$ and each $g\in \ell_p(Y)^*$, we have
\[g\Big(\Phi(a)U\xi\Big) =g\Big(\Phi(a)\Phi(\xi\otimes\delta_{x_0})\zeta\Big)=
g\Big(\Phi(a\xi\otimes\delta_{x_0})\zeta\Big)=g\Big(Ua\xi\Big),\]
so  $\Phi(a)=\Phi(a)UU^{-1}P=UaU^{-1}P$ for all $a\in \lproe(X)$.
Since the product of an operator and a compact operator is compact, $\Phi$ is compact and we are done.
\end{proof}

\begin{lemma}\label{LemmaTheMapsAreCoarse}
Let $p\in [1,\infty)$, and $X$ and $Y$ be u.l.f. metric spaces. Let $\Phi\colon \lproe(X)\to \lproe(Y)$ be a  strongly continuous embedding onto a hereditary subalgebra of $\lproe(Y)$. Then for all $r,\delta>0$  there exists $s>0$ such that if  \[
d(x_1,x_2)\leq r\text{ and }\|\Phi(e_{x_1x_1})e_{y_1y_1}\|, \|e_{y_2y_2}\Phi(e_{x_2x_2})\|\geq \delta,
\]
 then $\partial (y_1,y_2)\leq s$.
\end{lemma}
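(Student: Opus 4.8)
The plan is to lean on the explicit structure of $\Phi$ furnished by Lemma~\ref{LemmaPhiStronglyContAndU}: there is a surjective isomorphism $U\colon\ell_p(X)\to\mathrm{Im}(\Phi(1))$ with $\norm{U}=\norm{\Phi}$ and $\norm{U^{-1}}\le\norm{\Phi^{-1}}$ such that $\Phi(a)=UaU^{-1}\Phi(1)$ for all $a\in\lproe(X)$; moreover $\Phi$ is compact preserving and strongly continuous, hence coarse-like by Proposition~\ref{PropCoarseLike}. Setting $\zeta_x:=U\delta_x\in\ell_p(Y)$ and $f_x:=\delta_x\circ U^{-1}\circ\Phi(1)\in\ell_p(Y)^*$, a direct computation gives $\Phi(e_{x_1x_2})=Ue_{x_1x_2}U^{-1}\Phi(1)=\zeta_{x_2}\otimes f_{x_1}$ for all $x_1,x_2\in X$ (in particular each $\Phi(e_{xx})$ is the rank-one idempotent $\zeta_x\otimes f_x$). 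From $\norm{\delta_x}=1$, $\norm{U^{-1}}\le\norm{\Phi^{-1}}$ and $\norm{\Phi(1)}\le\norm{\Phi}$ one reads off the uniform bounds $\norm{\zeta_x}\le\norm{\Phi}$ and $\norm{f_x}\le\norm{\Phi^{-1}}\norm{\Phi}$.

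The heart of the argument is to convert the two hypotheses $\norm{\Phi(e_{x_1x_1})e_{y_1y_1}}\ge\delta$ and $\norm{e_{y_2y_2}\Phi(e_{x_2x_2})}\ge\delta$ into a lower bound for $\norm{e_{y_2y_2}\Phi(e_{x_1x_2})e_{y_1y_1}}$. Using $\norm{\xi\otimes g}=\norm{\xi}\norm{g}$ and the identity $(\xi\otimes g)(\xi'\otimes g')=g(\xi')\,\xi\otimes g'$, the rank-one formulas above give, writing $f_{x_1}(y_1)$, $\zeta_{x_2}(y_2)$ for the relevant evaluation/coordinate,
\[
\norm{\Phi(e_{x_1x_1})e_{y_1y_1}}=\abs{f_{x_1}(y_1)}\,\norm{\zeta_{x_1}},\qquad
\norm{e_{y_2y_2}\Phi(e_{x_2x_2})}=\abs{\zeta_{x_2}(y_2)}\,\norm{f_{x_2}},
\]
\[
\norm{e_{y_2y_2}\Phi(e_{x_1x_2})e_{y_1y_1}}=\abs{f_{x_1}(y_1)}\,\abs{\zeta_{x_2}(y_2)}
=\frac{\norm{\Phi(e_{x_1x_1})e_{y_1y_1}}\cdot\norm{e_{y_2y_2}\Phi(e_{x_2x_2})}}{\norm{\zeta_{x_1}}\,\norm{f_{x_2}}}.
\]
Plugging in the two hypotheses together with $\norm{\zeta_{x_1}}\le\norm{\Phi}$ and $\norm{f_{x_2}}\le\norm{\Phi^{-1}}\norm{\Phi}$ yields
\[
\norm{e_{y_2y_2}\Phi(e_{x_1x_2})e_{y_1y_1}}\ \ge\ \delta_0:=\frac{\delta^2}{\norm{\Phi^{-1}}\norm{\Phi}^2},
\]
a strictly positive constant depending \emph{only} on $\delta$ and $\Phi$, not on $x_1,x_2,y_1,y_2$.

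It now suffices to invoke coarse-likeness. Given $r$ (and the fixed $\delta$), apply coarse-likeness of $\Phi$ with $\eps=\delta_0/2$ to obtain $s>0$ such that every $a\in\lproe(X)$ with $\norm{a}\le 1$ and $\propg(a)\le r$ satisfies $\Phi(a)=b+c$ with $\propg(b)\le s$ and $\norm{c}\le\delta_0/2$. When $d(x_1,x_2)\le r$ the operator $e_{x_1x_2}$ has norm $1$ and propagation at most $r$, so this applies to it: write $\Phi(e_{x_1x_2})=b+c$ accordingly. If $\partial(y_1,y_2)>s$, then $e_{y_2y_2}be_{y_1y_1}=0$ by the propagation bound on $b$, and hence
\[
\delta_0\ \le\ \norm{e_{y_2y_2}\Phi(e_{x_1x_2})e_{y_1y_1}}\ =\ \norm{e_{y_2y_2}ce_{y_1y_1}}\ \le\ \norm{c}\ \le\ \delta_0/2,
\]
a contradiction. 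Therefore $\partial(y_1,y_2)\le s$, which is exactly what is claimed.

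The step I expect to need the most care is the middle one: pinning down the rank-one form $\Phi(e_{x_1x_2})=\zeta_{x_2}\otimes f_{x_1}$ — where, if one does not wish to invoke the $U$-representation directly, the factorization $e_{x_1x_2}=e_{x_2x_2}e_{x_1x_2}e_{x_1x_1}$ together with the rank-one-ness of each $\Phi(e_{xx})$ from Lemma~\ref{LemmaPhiStronglyContAndU} must be used — and then verifying that the constant $\delta_0$ produced by the norm estimate is genuinely uniform over the four points. Once that bound is in place, coarse-likeness closes the argument mechanically.
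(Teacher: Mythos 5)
Your proof is correct and follows the paper's own route: it uses the spatial form $\Phi(a)=UaU^{-1}\Phi(1)$ from Lemma~\ref{LemmaPhiStronglyContAndU} to factor the matrix coefficient $\|e_{y_2y_2}\Phi(e_{x_1x_2})e_{y_1y_1}\|$ as a product of the two hypothesized quantities (this is the paper's Claim~\ref{claim:rank1}), and then invokes coarse-likeness via Proposition~\ref{PropCoarseLike} to force $\partial(y_1,y_2)\le s$. The only differences are cosmetic: you argue directly with an explicit uniform constant $\delta_0=\delta^2/(\|\Phi^{-1}\|\,\|\Phi\|^2)$ instead of by contradiction along sequences, and your bookkeeping of the factors $\|\zeta_{x_1}\|$ and $\|f_{x_2}\|$ is if anything slightly more careful than the exact equality stated in Claim~\ref{claim:rank1}.
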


\begin{proof}
Suppose otherwise. Then there exist 
 $r,\delta>0$, 
 sequences $(x^1_n)_n$ and $(x^2_n)_n$ in $X$, and sequences $(y^1_n)_n$ and $(y^2_n)_n$  in $Y$ such that $d(x_n^1,x_n^2)\leq r$, 
$\|\Phi(e_{x_n^1x_n^1})e_{y_n^1y_n^1}\|\geq \delta$,  $\|e_{y_n^2y_n^2}\Phi(e_{x_n^2x_n^2})\|\geq \delta$, and 
$\partial(y_n^1,y_n^2)\geq n$ for all $n\in\N$.

By Lemma \ref{LemmaPhiStronglyContAndU} and Proposition \ref{PropCoarseLike},  $\Phi\colon \lproe(X)\to\lproe(Y)$ is   coarse-like, so there exists $s>0$ such that $\partial(y,y')\geq s$ implies \[\|e_{y'y'}\Phi(e_{x^1_nx^2_n})e_{yy}\|<\delta^2\] for all $n\in\N$. Fix $n\in\N$ such that $\partial(y^1_n,y^2_n)> s$. 
 
\begin{claim}\label{claim:rank1}
Given $x^1,x^2\in X$ and $y^1,y^2\in Y$, we have\footnote{This claim can be compared with its Hilbert space version \cite[Lemma~6.5]{BragaFarahVignati2018}.}

\[\|e_{y^2y^2}\Phi(e_{x^1x^2})e_{y^1y^1}\|=\|e_{y^2y^2}\Phi(e_{x^2x^2})\|\cdot\|\Phi(e_{x^1x^1})e_{y^1y^1}\|\]
\end{claim} 

\begin{proof}
Let $P=\Phi(1)$. By Lemma \ref{LemmaPhiStronglyContAndU}, there exists a surjective isomorphism $U\colon\ell_p(X)\to P\ell_p(Y)$   such that $\Phi(a)=UaU^{-1}P$. We have
\begin{align*}
\delta_{y^2}\Big(\Phi(e_{x^1 x^2 })\delta_{y^1 }\Big)&=\delta_{y^2}\Big(Ue_{x^1 x^2 }U^{-1}P\delta_{y^1 }\Big)\\
&=U^*\delta_{y^2}\Big(e_{x^1 x^2 }U^{-1}P\delta_{y^1 }\Big)\\
&=U^*\delta_{y^2}\Big(\delta_{x^1}\big(U^{-1}P\delta_{y^1 }\big)\delta_{x^2}\Big)\\
&=\delta_{x^1}\Big(U^{-1}P\delta_{y^1 }\Big)\cdot \delta_{y^2}\Big(U \delta_{x^2}\Big).
\end{align*}
Therefore, since $\|e_{y^2 y^2 }\Phi(e_{x^1 x^2 })e_{y^1 y^1}\|=|\delta_{y^2} (\Phi(e_{x^1 x^2 })\delta_{y^1 })|$, $\|\Phi(e_{x^1 x^1 })e_{y^1 y^1 }\|=|\delta_{x^1}(U^{-1}P\delta_{y^1 })|$ and $\|e_{y^2 y^2 }\Phi(e_{x^2 x^2 })\|=|\delta_{y^2}(U \delta_{x^2})|$, we are done. 
\end{proof}

The contradiction comes from the previous claim, as it gives that
 \[
 \delta^2\leq \|e_{y^2_ny^2_n}\Phi(e_{x^2_nx^2_n})\|\cdot\|\Phi(e_{x^1_nx^1_n})e_{y^1_ny^1_n}\|
 =\|e_{y^2_ny^2_n}\Phi(e_{x^1_nx^2_n})e_{y^1_ny^1_n}\|<\delta^2.\qedhere
\]
\end{proof}

 \begin{lemma}\label{LemmaTheMapsAreExpanding}
Let $p\in[1,\infty)$, and $X$ and $Y$ be u.l.f. metric spaces. Let $\Phi\colon \lproe(X)\to \lproe(Y)$ be an  embedding onto a hereditary Banach subalgebra of $\lproe(Y)$.  Then for all $s,\delta>0$  there exists $r>0$ such that  if
\[
d(x_1,x_2)\geq r\text{ and }\|\Phi(e_{x_1x_1})e_{y_1y_1}\|, \|e_{y_2y_2}\Phi(e_{x_2x_2})\|\geq \delta,
\]
 then $\partial (y_1,y_2)\geq s$.
\end{lemma}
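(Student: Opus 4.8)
The plan is to argue by contradiction in the same spirit as Lemma \ref{LemmaTheMapsAreCoarse}, but now using the \emph{expanding} rather than the coarse-like property. Suppose the statement fails: then there are $s,\delta>0$ together with sequences $(x^1_n)_n,(x^2_n)_n$ in $X$ and $(y^1_n)_n,(y^2_n)_n$ in $Y$ with $d(x^1_n,x^2_n)\geq n$, $\|\Phi(e_{x^1_nx^1_n})e_{y^1_ny^1_n}\|\geq\delta$, $\|e_{y^2_ny^2_n}\Phi(e_{x^2_nx^2_n})\|\geq\delta$, yet $\partial(y^1_n,y^2_n)<s$ for all $n$. By Lemma \ref{LemmaPhiStronglyContAndU} the map $\Phi$ is strongly continuous and compact preserving, hence coarse-like by Proposition \ref{PropCoarseLike}, and I may freely use the factorization $\Phi(a)=UaU^{-1}P$ with $P=\Phi(1)$ and the rank-one identity of Claim \ref{claim:rank1}. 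First I would apply Claim \ref{claim:rank1} to get
\[
\|e_{y^2_ny^2_n}\Phi(e_{x^1_nx^2_n})e_{y^1_ny^1_n}\|=\|e_{y^2_ny^2_n}\Phi(e_{x^2_nx^2_n})\|\cdot\|\Phi(e_{x^1_nx^1_n})e_{y^1_ny^1_n}\|\geq\delta^2
\]
for every $n$; so the matrix coefficients of the operators $\Phi(e_{x^1_nx^2_n})$ stay bounded below by $\delta^2$ along a pair of points whose distance in $Y$ is at most $s$.

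Next I would exploit that $e_{x^1_nx^2_n}$ is a partial isometry connecting two points of $X$ that are going to infinity apart, i.e.\ that $\{e_{x^1_nx^2_n}\}_n$, or rather a suitable subsum, behaves like a single operator in $\lproe(X)$ whose image under $\Phi$ must lie in $\lproe(Y)$ and hence be quasi-local (or at least coarse-like-controllable). Concretely: pass to a subsequence so that the points $x^1_n$ are all distinct and the $x^2_n$ are all distinct, and so that the four families $(x^1_n),(x^2_n),(y^1_n),(y^2_n)$ are "spread out" enough that $a:=\sum_n e_{x^1_nx^2_n}$ is a well-defined element of $\lproe(X)$ — this is possible because $X$ is u.l.f., by thinning to make the supports pairwise far apart; in fact $a$ has propagation $0$ after we relabel, or more honestly $a$ is a partial isometry of the form "shift", an element of $\lproe(X)$. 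Wait — $a$ need not have finite propagation since $d(x^1_n,x^2_n)\to\infty$. The correct move is instead: the operators $e_{x^1_nx^2_n}$ are mutually orthogonal in the sense $e_{x^1_nx^2_n}e_{x^1_mx^2_m}=0$ for $n\neq m$ (after thinning so the index points are distinct), so $\sum_{n\in M}e_{x^1_nx^2_n}$ converges strongly for every $M\subseteq\N$ to a contraction $a_M\in\mathcal B(\ell_p(X))$; moreover each such $a_M$ is quasi-local — indeed $\|\chi_A a_M\chi_B\|\leq 1$ only for pairs $A,B$ meeting the supports, but by thinning we can arrange that for each $s'$ only finitely many $n$ have $d(x^1_n,x^2_n)\leq s'$, wait that's automatic, hmm — the point is that $a_M$ is a sum of rank-one operators with the $x^1_n$ distinct and $x^2_n$ distinct, so $\chi_A a_M\chi_B\neq 0$ forces $A$ to meet $\{x^1_n\}$ and $B$ to meet $\{x^2_n\}$, and since the pairing $x^2_n\mapsto x^1_n$ has $d\to\infty$, $a_M$ is \emph{not} quasi-local in general.

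Given that subtlety, the cleaner route — and the one I would actually pursue — is to avoid quasi-locality of the domain side altogether and instead mimic \cite[Lemma 6.6]{BragaFarahVignati2018}: use that $\Phi$ is coarse-like together with Lemma \ref{LemmaTheMapsAreCoarse} to produce, from the sequences above, a \emph{single} finite-propagation contraction in $\lproe(X)$ whose image under $\Phi$ fails to be approximable by finite-propagation operators, contradicting coarse-likeness. Precisely: after thinning, build $b=\sum_n e_{x^1_nx^2_n}$ where we have \emph{re-chosen} $x^2_n$ to lie within bounded distance of $x^1_n$ — no: the hypothesis gives $d(x^1_n,x^2_n)\geq n$, so $b$ has infinite propagation and is not in $\lproe(X)$. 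So the honest statement is: $b\in\mathcal B(\ell_p(X))$ is a strong-operator-convergent sum of orthogonal partial isometries, hence quasi-local iff the connecting distances are bounded, which they are not; therefore $b\notin\lproe(X)$ and we cannot feed it to $\Phi$ directly. The resolution, and the main obstacle of the proof, is to instead run the argument "locally": fix $n$, consider the finite-propagation element $e_{x^1_nx^2_n}$ by itself, note $\Phi(e_{x^1_nx^2_n})$ has a matrix coefficient $\geq\delta^2$ between $y^1_n$ and $y^2_n$ with $\partial(y^1_n,y^2_n)\leq s$, while simultaneously — and this is where I invoke Lemma \ref{LemmaTheMapsAreCoarse} applied to the pair $(x^1_n,x^1_n)$ and $(x^2_n,x^2_n)$ — the "natural" images of $x^1_n$ and $x^2_n$ under the coarse map induced by $\Phi$ are forced apart as $n\to\infty$, because $e_{x^1_nx^2_n}=\Phi(e_{x^1_nx^1_n})$-to-$\Phi(e_{x^2_nx^2_n})$ "travels a long way" in $X$ and coarse-likeness of the \emph{inverse}, obtained from Lemma \ref{LemmaPhiStronglyContAndU} applied to $\Phi^{-1}$ on the hereditary subalgebra, converts long $X$-distance into long $Y$-distance. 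Thus I would: (i) observe $\Phi^{-1}:\mathrm{Im}(\Phi(1))\to\lproe(X)$ is again of the form $U^{-1}(\cdot)U$, strongly continuous and compact preserving, hence coarse-like as a map $\mathrm{Im}(P)\to\lproe(X)$; (ii) from $\|\Phi(e_{x^i_nx^i_n})e_{y^i_ny^i_n}\|\geq\delta$ deduce $\|e_{x^i_nx^i_n}\Phi^{-1}(e_{y^i_ny^i_n}P)\|\geq\delta/\|\Phi\|$ up to constants; (iii) apply Lemma \ref{LemmaTheMapsAreCoarse} with the roles of $X,Y$ reversed (which is legitimate since $\Phi^{-1}$ is an embedding onto the hereditary subalgebra $\lproe(X)=\Phi^{-1}[\mathrm{Im}(P)]$ of... no, the target is all of $\lproe(X)$, which is trivially hereditary in itself) to conclude: $\partial(y^1_n,y^2_n)\leq s$ and the matrix bounds force $d(x^1_n,x^2_n)\leq s'$ for a uniform $s'$, contradicting $d(x^1_n,x^2_n)\geq n$. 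The hard part will be setting up step (iii) cleanly — verifying that Lemma \ref{LemmaTheMapsAreCoarse}'s hypotheses really do apply to the "inverse embedding" $\Phi^{-1}$, i.e.\ that it is itself a strongly continuous embedding onto a hereditary subalgebra (here onto all of $\lproe(X)$), so that its conclusion, which is exactly "bounded $Y$-distance plus matrix bounds implies bounded $X$-distance," is available; the symmetry between Lemmas \ref{LemmaTheMapsAreCoarse} and \ref{LemmaTheMapsAreExpanding} is then precisely the symmetry $\Phi\leftrightarrow\Phi^{-1}$, with the rank-one factorization of Lemma \ref{LemmaPhiStronglyContAndU} and Claim \ref{claim:rank1} doing the bookkeeping on both sides.
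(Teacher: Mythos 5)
There is a genuine gap, and it sits exactly where you yourself flag the ``hard part'': step (iii) is not available. Your plan is to apply Lemma \ref{LemmaTheMapsAreCoarse} (equivalently, coarse-likeness plus Claim \ref{claim:rank1}) to $\Phi^{-1}$ ``with the roles of $X$ and $Y$ reversed.'' But $\Phi^{-1}$ is only defined on the hereditary subalgebra $B=\Phi(\lproe(X))\subseteq\lproe(Y)$, not on $\lproe(Y)$; its domain is not the $\ell_p$ uniform Roe algebra of a metric space, and in particular the matrix units $e_{y^i_ny^i_n}$ appearing in your step (ii) need not lie in $B$ (hereditariness only gives $\Phi(1)e_{yy}\Phi(1)\in B$, not $e_{yy}\Phi(1)\in B$), so $\Phi^{-1}(e_{y^i_ny^i_n}P)$ is not even defined and the hypotheses of Lemma \ref{LemmaTheMapsAreCoarse} cannot be checked for this ``inverse embedding.'' That symmetry $\Phi\leftrightarrow\Phi^{-1}$ is legitimate only when $\Phi$ is an isomorphism onto all of $\lproe(Y)$ (this is how the paper proves Corollary \ref{cor:onlybanach}); for a proper embedding it breaks down, which is precisely why this lemma needs a separate argument. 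Your fallback ``local'' idea (fix $n$ and work with $e_{x^1_nx^2_n}$, or with $Pe_{y^1_ny^2_n}P$, one $n$ at a time) also cannot produce a contradiction: coarse-likeness of $\Phi$ only controls images of inputs of \emph{bounded} propagation, and $\propg(e_{x^1_nx^2_n})=d(x^1_n,x^2_n)\to\infty$; moreover any single element of $\lproe(X)$ is allowed to have one large matrix coefficient at one far-apart pair, so no contradiction arises from finitely many separate elements.

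The missing idea is to build the auxiliary operator on the $Y$ side, where the distances are \emph{bounded}, and pull it back through the hereditary image. Since $\partial(y^1_n,y^2_n)\leq s$, the sum $b=\sum_n e_{y^1_ny^2_n}$ converges SOT to an element of $\lproe(Y)$ of propagation at most $s$; hereditariness gives a single $a\in\lproe(X)$ with $\Phi(a)=\Phi(1)\,b\,\Phi(1)$. After thinning the sequences (distinctness of the $y^i_n$, and Lemma \ref{L:AA} to make the cross terms $\|e_{y^1_my^2_m}\Phi(e_{x^1_nx^1_n})\|$ summably small for $n\neq m$), the rank-one computation you started with yields $\|e_{x^2_nx^2_n}\,a\,e_{x^1_nx^1_n}\|\geq \delta^2/(2\|\Phi\|)$ for all $n$. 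Since $a\in\lproe(X)$ is approximable by finite-propagation operators while $d(x^1_n,x^2_n)\to\infty$, this is the desired contradiction. Your opening reduction and the use of Claim \ref{claim:rank1} are fine; it is the passage from those pointwise estimates to a contradiction that your proposal does not supply.
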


\begin{proof}
Suppose otherwise. Then there exist 
 $s,\delta>0$, 
 sequences $(x^1_n)_n$ and $(x^2_n)_n$ in $X$, and sequences $(y^1_n)_n$ and $(y^2_n)_n$  in $Y$ such that $d(x_n^1,x_n^2)\geq n$, 
$\|\Phi(e_{x_n^1x_n^1})e_{y_n^1y_n^1}\|\geq \delta$,  $\|e_{y_n^2y_n^2}\Phi(e_{x_n^2x_n^2})\|\geq \delta$, and 
$\partial(y_n^1,y_n^2)\leq s$ for all $n\in\N$. 

Since $d(x_n^1,x_n^2)\geq n$ for all $n\in\N$, by going to a subsequence, we can assume that  $(x^1_n)_n$ is a sequence of distinct elements (if not, exchange $x_n^1$ and $x_n^2$ and the proof will follow similarly).

\begin{claim}
We can assume that both $(y^1_n)_n$ and $(y^2_n)_n$ are sequences of distinct elements.
\end{claim}

\begin{proof}
If not, by going to a subsequence and using that $Y$ is u.l.f., since $\partial(y_n^1,y_n^2)\leq s$ for all $n\in\N$, we can assume that both $(y^1_n)_n$ and $(y^2_n)_n$ are constant. As $(x^1_n)_n$ is a sequence of distinct elements, $\sum_ne_{x^1_nx^1_n}$ converges in the strong operator topology to an element in $\lproe(X)$. Since $\Phi$ is strongly continuous by Lemma \ref{LemmaPhiStronglyContAndU}, we have that $\Phi(\sum_ne_{x^1_nx^1_n})=\sum_n\Phi(e_{x^1_nx^1_n})$. Hence, $\sum_n\Phi(e_{x^1_nx^1_n})\delta_{y^1_n}$ converges in $\ell_p(Y)$. However,  
\[
\|\Phi(e_{x^1_nx^1_n})\delta_{y^1_n}\|=\|\Phi(e_{x^1_nx^1_n})e_{y^1_ny^1_n}\|\geq \delta
\]
 for all $n\in\N$;  contradiction.
\end{proof}
 By going to a further subsequence, Lemma \ref{L:AA} and the fact that    $(\Phi(e_{x^1_nx^1_n}))_n$ is a  sequence  in $M_\infty^p(Y)$ allow us to assume that 
\[\|e_{y^1_my^2_m}\Phi(e_{x^1_nx^1_n})\|<  2^{-n-m-1}{\delta^2}\|\Phi\|^{-1}\]
for all $n\neq m$. 

Since $\partial(y^1_n,y^2_n)\leq s$ for all $n\in\N$, $\sum_{n\in\N}e_{y^1_ny^2_n}$ converges in the strong operator topology to an element in $\lproe(Y)$. As $\Phi(\lproe(X))$ is hereditary, there exists $a\in \lproe(X)$ such that 
\[\Phi(a)=\Phi(1)\Big(\sum_{n\in\N}e_{y^1_ny^2_n}\Big)\Phi(1).\]

\begin{claim}\label{Claim1234}
$\inf_n\|e_{x^2_nx^2_n}ae_{x^1_nx^1_n}\|\geq \delta^2/(2\|\Phi\|)$.
\end{claim}

\begin{proof}
First notice that for each $n\in\N$, we have
\[\|\Phi(e_{x^2_nx^2_n})e_{y^1_ny^2_n} \Phi(e_{x^1_nx^1_n})\|=
\|\Phi(e_{x^2_nx^2_n})e_{y^2_ny^2_n}\|\cdot\|e_{y^1_ny^1_n} \Phi(e_{x^1_nx^1_n})\| .\] Indeed, pick $\rho<\|e_{y^1_ny^1_n} \Phi(e_{x^1_nx^1_n})\| $ and a unit vector $\xi\in \ell_p(Y)$ such that $\|e_{y^1_ny^1_n} \Phi(e_{x^1_nx^1_n})\xi\|>\rho$. Hence, since $e_{y^1_ny^1_n} \Phi(e_{x^1_nx^1_n})\xi=\lambda\delta_{y^1_n}$, where $|\lambda|=\|e_{y^1_ny^1_n} \Phi(e_{x^1_nx^1_n})\xi\|$, we have that  
\[\|\Phi(e_{x^2_nx^2_n})e_{y^1_ny^2_n} \Phi(e_{x^1_nx^1_n})\xi\|=\|\Phi(e_{x^2_nx^2_n})e_{y^1_ny^2_n}\lambda\delta_{y^1_n}\|=|\lambda|\cdot \|\Phi(e_{x^2_nx^2_n})e_{y^2_ny^2_n}\|.\]
So, $\|\Phi(e_{x^2_nx^2_n})e_{y^1_ny^2_n} \Phi(e_{x^1_nx^1_n})\|\geq \rho\|\Phi(e_{x^2_nx^2_n})e_{y^2_ny^2_n}\|$. 

Hence, $\|\Phi(e_{x^2_nx^2_n})e_{y^1_ny^2_n} \Phi(e_{x^1_nx^1_n})\|\geq \delta^2$, and it follows that,  for all $n\in\N$, we have  
\begin{align*}
\|\Phi\|&\cdot \|e_{x^2_nx^2_n}  ae_{x^1_nx^1_n}\|
\geq \Big\|\Phi(e_{x^2_nx^2_n})\Phi(a) \Phi(e_{x^1_nx^1_n})\Big\|\\
&=\Big\|\Phi(e_{x^2_nx^2_n})\Big(\sum_{m\in\N} e_{y^1_my^2_m}\Big) \Phi(e_{x^1_nx^1_n})\Big\|\\
&\geq \|\Phi(e_{x^2_nx^2_n})e_{y^1_ny^2_n} \Phi(e_{x^1_nx^1_n})\|- \sum_{m\neq n}\|\Phi(e_{x^2_nx^2_n})e_{y^1_my^2_m} \Phi(e_{x^1_nx^1_n})\| \geq \frac{\delta^2}{2}.\qedhere
\end{align*}
\end{proof}
Since $a\in \lproe(X)$ and $\lim_nd(x^1_n,x^2_n)=\infty$, Claim \ref{Claim1234} gives us a contradiction.
\end{proof}

\begin{proof}[Proof of Theorem \ref{ThmEmbHereditarySubAlg}]
By Lemma \ref{LemmaPhiStronglyContAndU} and Lemma \ref{LemmaPickMapf}, there exist  maps $f\colon X\to Y$ and $g\colon X\to Y$ such that 
\[\|e_{g(x)g(x)}\Phi(e_{xx})e_{f(x)f(x)}\|\geq \delta\]
for all $x\in X$. In particular, $\|e_{g(x)g(x)}\Phi(e_{xx})\| \geq \delta$ and $\|\Phi(e_{xx})e_{f(x)f(x)}\|\geq \delta$ for all $x\in X$.
By Lemma \ref{LemmaCloseAndUnfFiniteToOne}\eqref{ItemCloseAndUnfFiniteToOne1}, $f$ and $g$ are close, so there is  $m>0$ such that  $\partial(f(x),g(x))\leq m$ for all $x\in X$.

We want to show that $f$ is coarse and expanding. Fix $r>0$, and let $s>0$ be given by Lemma \ref{LemmaTheMapsAreCoarse} for $r$ and $\delta$. Then 
\[
\partial(f(x),f(x'))\leq \partial (f(x),g(x'))+\partial (g(x'),f(x'))\leq s+m
\]
for all $x,x'\in X$ with $d(x,x')\leq r$, hence $f$ is coarse. To see that $f$ is expanding, fix $r>0$, and let $s>0$ be given by Lemma \ref{LemmaTheMapsAreExpanding} for $r$ and $\delta$. Then 
\[
\partial(f(x),f(x'))\geq \partial (f(x),g(x'))-\partial (g(x'),f(x'))\geq r-m
\]
for all $x,x'\in X$ with $d(x,x')\geq s$.
\end{proof}

\begin{proof}[Proof of Corollary~\ref{cor:onlybanach}]
   Fix an isomorphism $\Phi\colon\lproe(X)\to\lproe(Y)$. By Lemma \ref{LemmaPhiStronglyContAndU}, both $\Phi$ and $\Phi^{-1}$ are strongly continuous and compact preserving.  By Lemma~\ref{LemmaPickMapf} applied to $\Phi$ and $\Phi^{-1}$, pick  $\delta>0$, and maps  $f\colon X\to Y$ and $g\colon Y\to X$ with the property that
\[
\norm{\Phi(e_{xx})e_{f(x)f(x)}},\norm{\Phi^{-1}(e_{yy})e_{g(x)g(x)}}>\delta.
\]
for all $x\in X$ and all $y\in Y$. By Proposition \ref{PropCoarseLike}, both $\Phi$ and $\Phi^{-1}$ are coarse-like. Proceeding as in the proof of Theorem \ref{ThmEmbHereditarySubAlg},   both $f$  and $g$ are  coarse maps. Hence,  it is enough to show that $g\circ f$ is close to the identity on $X$ and that $f\circ g$ is close to the identity on $Y$. 

Suppose $g\circ f$ is not close to $\mathrm{Id}_X$. Then there exists a sequence $(x_n)_n\subseteq X$ of distinct points   such that $d(x_n,g(f(x_n)))\geq n$ for all $n\in\N$. For brevity let $y_n=f(x_n)$ and $z_n=g(y_n)$. By the choice of $f$ and $g$, we have that
\[
\norm{\Phi(e_{x_nx_n})e_{y_ny_n}},\norm{\Phi^{-1}(e_{y_ny_n})e_{z_nz_n}}>\delta
\]
for all $n\in\N$.   By Claim~\ref{claim:rank1} applied to $\Phi^{-1}$, we  have that
\begin{align*}
\norm{e_{x_nx_n}\Phi^{-1}(e_{y_ny_n})e_{z_nz_n}}&=\norm{e_{x_nx_n}\Phi^{-1}(e_{y_ny_n})}\cdot \norm{\Phi^{-1}(e_{y_ny_n})e_{z_nz_n}}\\
&\geq\|\Phi\|^{-1}\cdot \norm{\Phi(e_{x_nx_n})e_{y_ny_n}}\cdot \norm{\Phi^{-1}(e_{y_ny_n})e_{z_nz_n}} \\
&>\eta^2\|\Phi\|^{-1}
\end{align*}
 for every $n\in\NN$. Let $d$ be a metric on $X$. Since $\Phi^{-1}$ is coarse like and $\lim_nd(x_n,z_n)=\infty$, this gives us a contradiction. Similarly, we get that $f\circ g$ is close to $\mathrm{Id}_Y$. 
\end{proof}

\subsection{Property A and injectivity}\label{subsec:propA}
In this subsection, we show that if we
assume Y satisfies  the stronger geometric condition of property A, then the
coarse embedding of Theorem \ref{ThmEmbHereditarySubAlg}  can be taken to be injective and the coarse equivalence of Corollary \ref{cor:onlybanach} can be taken to be a bijective coarse equivalence.  The methods in this subsection are inspired by \cite{WhiteWillett2017}.


Throughout the remainder of this subsection, fix $p\in [1,\infty)$, u.l.f. metric spaces $(X,d)$ and $(Y,\partial)$ with property A, and a Banach algebra embedding $\Phi\colon \lproe(X)\to \lproe(Y)$ onto a hereditary subalgebra of $\lproe(Y)$. Moreover, by Lemma \ref{LemmaCloseAndUnfFiniteToOne}, fix an isomorphism   $U\colon \ell_p(X)\to \mathrm{Im}(\Phi(1))\subset \ell_p(Y)$ so that 
\[\Phi(a)=UaU^{-1}\Phi(1), \text{ for all }a\in \lproe(X).\]
At last, for $x\in X$, $A\subset X$ and $\delta>0$, we define
\[Y_{x,\delta}=\Big\{y\in Y\mid \exists z\in Y,\ \max\{\|e_{yy}\Phi(e_{xx})e_{zz}\|,\|e_{zz}\Phi(e_{xx})e_{yy}\|\}\geq \delta\Big\}\]  
and 
\[Y_{A,\delta}=\bigcup_{x\in A}Y_{x,\delta}.\] 

The next lemma is \cite[Lemma 6.6]{BragaFarahVignati2019} for $p\neq 2$. 

\begin{lemma}\label{LemmaUdelta}
For all $\gamma>0$ and all  $\eps>0$ there exists $r>0$ such that 
for all   $A\subseteq X$ and all $B\subseteq   Y$  with $\|\Phi(\chi_A)\chi_B\Phi(1)\|>\gamma$, there exists $D\subseteq   X$ with $\diam(D)<r$ such that \[\|\Phi(\chi_{A\cap D})\chi_{B}\Phi(1)\|\geq (1-\eps)\|\Phi(\chi_A)\chi_{B}\Phi(1)\|.\]
\end{lemma}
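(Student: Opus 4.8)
The plan is to argue by contradiction, in the spirit of the proof of \cite[Lemma 6.6]{BragaFarahVignati2019}, but taking care of the lack of adjoints by systematically using the operator $U$ coming from Lemma~\ref{LemmaPhiStronglyContAndU}. Suppose the statement fails for some $\gamma,\eps>0$. Then for every $r>0$ there are $A=A_r\subseteq X$ and $B=B_r\subseteq Y$ with $\|\Phi(\chi_A)\chi_B\Phi(1)\|>\gamma$, yet $\|\Phi(\chi_{A\cap D})\chi_B\Phi(1)\|<(1-\eps)\|\Phi(\chi_A)\chi_B\Phi(1)\|$ for every $D\subseteq X$ of diameter $<r$. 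Taking $r=r_n\to\infty$, we obtain sequences $(A_n)_n$ in $X$ and $(B_n)_n$ in $Y$ with these properties. The key point is that $\Phi(\chi_{A_n})=U\chi_{A_n}U^{-1}\Phi(1)$, so $\Phi(\chi_{A_n})\chi_{B_n}\Phi(1)$ acts as $U$ restricted to $\chi_{A_n}U^{-1}\chi_{B_n}(\mathrm{Im}\,\Phi(1))$, and its norm is comparable (up to $\|U\|,\|U^{-1}\|$) to $\|\chi_{A_n}U^{-1}\chi_{B_n}\|$ viewed inside $\ell_p$.

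First I would reduce to a statement purely about the operator $c=U^{-1}\Phi(1)\colon\ell_p(Y)\to\ell_p(X)$ (equivalently, $U^{-1}$ restricted to $\mathrm{Im}\,\Phi(1)$): since $\|\Phi(\chi_{A})\chi_B\Phi(1)\|=\|U\chi_A U^{-1}\chi_B\Phi(1)\|$ and $\|U\|,\|U^{-1}\|$ are finite, the failure of the conclusion gives $A_n,B_n$ with $\|\chi_{A_n}U^{-1}\chi_{B_n}\Phi(1)\|$ bounded below by a constant $\gamma'>0$ while $\|\chi_{A_n\cap D}U^{-1}\chi_{B_n}\Phi(1)\|$ is, for each $D$ of diameter $<r_n$, strictly smaller than $(1-\eps')$ times the full norm (with $\eps'$ depending only on $\eps$ and $\|U\|,\|U^{-1}\|$ — here one must be a little careful, shrinking $\eps'$ as needed so the obstruction survives). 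Next, using that $Y$ is u.l.f. and $\Phi$ is coarse-like (Lemma~\ref{LemmaPhiStronglyContAndU} together with Proposition~\ref{PropCoarseLike}), I would argue that the ``mass'' of $\Phi(\chi_{A_n})\chi_{B_n}\Phi(1)$ that is captured by any individual point of $A_n$, and hence by any bounded-diameter piece $D$, cannot be too small unless $A_n$ genuinely spreads out: the contradiction hypothesis forces, for every bounded $D$, that $A_n\setminus D$ still carries $\geq \eps'$ of the norm, and iterating this (choosing disjoint bounded sets $D^{(1)},D^{(2)},\dots$ greedily to capture the largest remaining contribution) produces, after passing to a subsequence, a genuinely infinite ``staircase'': points $x_k\in A_n$, $y_k\in B_n$ with $\|\Phi(e_{x_kx_k})e_{y_ky_k}\Phi(1)\|\geq\delta$ for a fixed $\delta>0$, where the $x_k$ are $r_n$-separated in $X$ and (after using u.l.f. of $Y$ and coarse-likeness to control the $y_k$) the $y_k$ are pairwise far apart in $Y$.

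Finally I would extract the contradiction from the existence of such an infinite well-separated staircase. The sum $\sum_k e_{x_kx_k}$ converges strongly to an element of $\lproe(X)$, and since $\Phi$ is strongly continuous, $\sum_k \Phi(e_{x_kx_k})$ converges strongly; applying this operator to $\sum_k \lambda_k\delta_{y_k}$ for suitable scalars — or more simply, observing that $\Phi(\chi_{\{x_k\}})\chi_{\{y_k\}}\Phi(1)$ has norm $\geq\delta$ for each $k$ while these operators are ``almost orthogonal'' because $x_k$'s are spread in $X$ and $y_k$'s are spread in $Y$ — contradicts coarse-likeness: a coarse-like strongly continuous $\Phi$ must send $\chi_{A_n}$ (which has propagation $0$) to something $\delta/2$-$s$-approximable with $s$ independent of $n$, forcing the contributions at $r_n$-separated points to eventually vanish once $r_n>s$. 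I expect the main obstacle to be the bookkeeping in the greedy iteration, namely showing that when the conclusion fails for all bounded $D$ one really can peel off infinitely many disjoint bounded pieces each carrying a definite fraction of the norm (this is where $p\neq 2$ enters, via the cotype-$\max\{2,p\}$ argument of Lemma~\ref{LemmaCloseAndUnfFiniteToOne} to keep the total mass finite and hence the individual contributions from accumulating), and in simultaneously controlling the $Y$-side points $y_k$ so that they too become well-separated — for which I would lean on the u.l.f. property of $Y$ together with Lemma~\ref{LemmaTheMapsAreExpanding}.
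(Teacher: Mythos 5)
Your proposal does not follow the paper's route, and the place where it diverges is exactly where the real content of the lemma lies. The paper proves the lemma by quoting operator norm localization: property A is equivalent to the metric sparsification property (\cite{ChenTesseraWangYu2008}, \cite{Sako2014}), the proof of \cite[Proposition 4.1]{ChenTesseraWangYu2008} works for every $p\in[1,\infty)$, hence the space has the $p$-operator norm localization property, and with this in hand the argument of \cite[Lemma 6.6]{BragaFarahVignati2019} carries over. Your sketch never invokes ONL or metric sparsification, and the substitutes you propose do not close the gap. First, the reduction to $U^{-1}\Phi(1)$ destroys the multiplicative structure of the statement: since $\Phi(\chi_{A\cap D})\chi_B\Phi(1)=U\bigl(\chi_{A\cap D}U^{-1}\Phi(1)\chi_B\Phi(1)\bigr)$, the contradiction hypothesis $\|\Phi(\chi_{A\cap D})\chi_B\Phi(1)\|<(1-\eps)\|\Phi(\chi_A)\chi_B\Phi(1)\|$ only transfers to $\|\chi_{A\cap D}U^{-1}\Phi(1)\chi_B\Phi(1)\|<(1-\eps)\,\|U\|\,\|U^{-1}\|\,\|\chi_AU^{-1}\Phi(1)\chi_B\Phi(1)\|$, which is vacuous as soon as $(1-\eps)\|U\|\|U^{-1}\|\geq 1$; shrinking $\eps'$ cannot repair this, because the lemma must hold for every $\eps$, however small, while the loss $\|U\|\|U^{-1}\|$ is a fixed constant $\geq 1$.

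Second, and more fundamentally, the passage from ``every $D$ of diameter $<r_n$ misses an $\eps$-fraction of the norm'' (which, by the triangle inequality, does give $\|\Phi(\chi_{A_n\setminus D})\chi_{B_n}\Phi(1)\|\geq\eps\gamma$ for every such $D$) to a well-separated staircase of matrix coefficients $\|e_{y_ky_k}\Phi(e_{x_kx_k})e_{z_kz_k}\Phi(1)\|\geq\delta$ is precisely the operator-norm-localization problem you are trying to bypass: the residual operators could a priori keep norm $\geq\eps\gamma$ while all their bounded-diameter compressions and all their matrix entries are arbitrarily small (this is exactly ghost-like behaviour), and nothing in the greedy scheme excludes it. The cotype argument does not help: it bounds how many disjoint pieces can act with norm $\geq\delta$ on a \emph{fixed} vector, not how many disjoint pieces can each carry operator norm $\geq\eps\gamma$ (for $\Phi=\mathrm{id}$ and $A=X$ every singleton carries norm $1$). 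Finally, the concluding ``contradiction with coarse-likeness'' is not a contradiction: $\delta/2$-$s$-approximability of $\Phi(\chi_{A_n})$ only forces $\|e_{yy}\Phi(\chi_{A_n})e_{y'y'}\|$ to be small when $\partial(y,y')>s$, whereas each coefficient in your staircase pairs two points of $Y$ that are close to one another and merely spread out across $Y$ as $k$ varies, which is perfectly compatible with propagation $\leq s$ (think of $\chi_C$ for a large $C\subseteq Y$). The missing ingredient is exactly the $p$-ONL input used in the paper; deriving the lemma from ghost-compactness and coarse-likeness alone would in effect reprove Sako's theorem that property A implies operator norm localization.
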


\begin{proof}[Sketch of the proof]
Since $Y$ has property A, it also has the \emph{$2$-operator norm localization  property}\footnote{We refer the reader to \cite[Defintiion 6.4]{WhiteWillett2017} for the definition of the \emph{$2$-operator norm localization property}. The \emph{$p$-operator norm localization property} (\emph{$p$-ONL} for short) is defined analogously.}   Precisely, the authors of \cite{ChenTesseraWangYu2008} introduced the \emph{metric sparsification property} (\cite[Definition 3.1]{ChenTesseraWangYu2008}) and proved that this property implies the $2$-ONL (see \cite[Proposition 3.3 and Proposition 4.1]{ChenTesseraWangYu2008}), and the metric sparsification property is now known to be equivalent to property A (see \cite[Proposition 3.2 and Theorem 3.8]{ChenTesseraWangYu2008} and \cite[Theorem 4.1]{Sako2014}). Moreover, the proof of \cite[Proposition 4.1]{ChenTesseraWangYu2008} works for any $p\in [1,\infty)$. Hence property A implies the $p$-ONL for all $p\in [1,\infty)$. The proof then follows completely analogously to the proof of \cite[Lemma 6.6]{BragaFarahVignati2019}, so we omit the details.
\end{proof}

\begin{lemma}\label{LemmaONL}
For all $\eps>0$ there is $\delta>0$ such that $\norm{\Phi(e_{xx})(1-\chi_{Y_{x,\delta}})}<\eps$ for all $x\in X$.
\end{lemma}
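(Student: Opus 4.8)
The plan is to exploit the rank-one structure of $\Phi(e_{xx})$ together with coarse-likeness of $\Phi$ and the $p$-operator norm localization property of $Y$ in order to control where the mass of $\Phi(e_{xx})$ sits, and then read off the estimate. First I would set up notation. By Lemma~\ref{LemmaPhiStronglyContAndU}, $\Phi$ is strongly continuous and compact preserving (hence coarse-like, by Proposition~\ref{PropCoarseLike}), there is an isomorphism $U\colon\ell_p(X)\to\mathrm{Im}(\Phi(1))$ with $\|U\|=\|\Phi\|$ and $\Phi(a)=UaU^{-1}\Phi(1)$, and each $\Phi(e_{xx})$ has rank one. Writing $\Phi(e_{xx})=\xi_x\otimes f_x$ with $\xi_x=U\delta_x$ and $f_x(\eta)=(U^{-1}\Phi(1)\eta)(x)$, one has $\|e_{yy}\Phi(e_{xx})e_{zz}\|=|\xi_x(y)|\,|f_x(\delta_z)|$; and since $e_{xx}$ is a nonzero idempotent, $1\le\|\Phi(e_{xx})\|=\|\xi_x\|_p\,\|f_x\|_q\le\|\Phi\|$ and $\|\Phi^{-1}\|^{-1}\le\|\xi_x\|_p\le\|\Phi\|$. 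From the definition of $Y_{x,\delta}$, using a coordinate of $\xi_x$ of maximal modulus as the witness $z$, one gets $Y\setminus Y_{x,\delta}\subseteq\{y:|f_x(\delta_y)|\le\delta/\|\xi_x\|_\infty\}$, so with $Z:=Y\setminus Y_{x,\delta}$ the task becomes to bound $\|\Phi(e_{xx})(1-\chi_{Y_{x,\delta}})\|=\|\xi_x\|_p\,\|\chi_Zf_x\|_q$.

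The core of the argument is to show that for an error parameter $\eps_1>0$, to be fixed only at the very end, there are a scale $s>0$ and a constant $M$ — depending on $\eps_1$, on $\Phi$, and on the coarse geometry, but not on $x$ — and for each $x$ a set $T_x^*\subseteq Y$ with $\diam(T_x^*)\le s$ and $|T_x^*|\le M$ (uniform local finiteness converts bounded diameter into bounded cardinality), such that $\|(1-\chi_{T_x^*})\xi_x\|_p\le\eps_1$ and $\|(1-\chi_{T_x^*})f_x\|_q\le\eps_1$. To produce $T_x^*$: coarse-likeness gives $\Phi(e_{xx})=b_x+r_x$ with $\propg(b_x)\le s_1$ and $\|r_x\|\le\eps_1$ uniformly in $x$, so $\Phi(e_{xx})$ is uniformly $\eps_1$-$s_1$-quasi-local; since $Y$ has property~A it satisfies the $p$-ONL, which applied to $b_x$ yields a scale $s_2$ and a unit vector $v_x$ with $\diam(\supp v_x)\le s_2$ and $\|b_xv_x\|\ge c_0\|b_x\|$ for a fixed $c_0\in(0,1)$. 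Since $\Phi(e_{xx})v_x=f_x(v_x)\,\xi_x$, $\|b_xv_x-\Phi(e_{xx})v_x\|\le\eps_1$, and $\|b_x\|\ge\|\Phi(e_{xx})\|-\eps_1\ge 1-\eps_1$, this forces $|f_x(v_x)|\ge\mu$ for a uniform $\mu>0$ and shows that $\xi_x$ is $(\eps_1/\mu)$-concentrated on $T_x:=\{y:\partial(y,\supp v_x)\le s_1\}$, since $b_xv_x$ is supported there. Now fix a bounded enlargement $T_x^*$ of $T_x$ with $\partial(T_x,Y\setminus T_x^*)>s_1$: feeding the partition $\{T_x,\,Y\setminus T_x^*\}$ into the quasi-locality estimate, together with the lower bound on $\|\chi_{T_x}\xi_x\|_p$ just obtained, gives $\|(1-\chi_{T_x^*})f_x\|_q\le 2\eps_1\|\Phi^{-1}\|$; since also $\|(1-\chi_{T_x^*})\xi_x\|_p\le\|(1-\chi_{T_x})\xi_x\|_p\le\eps_1/\mu$, absorbing the harmless constants into $\eps_1$ proves the claim.

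The rest is bookkeeping. Concentration of $\xi_x$ on $T_x^*$ and $\|\xi_x\|_p\ge\|\Phi^{-1}\|^{-1}$ give a uniform lower bound $\|\xi_x\|_\infty\ge\|\chi_{T_x^*}\xi_x\|_p/M^{1/p}\ge\beta>0$, hence $Z\subseteq\{y:|f_x(\delta_y)|\le\delta/\beta\}$, and so
\[
\|\chi_Zf_x\|_q\le\|\chi_{Z\cap T_x^*}f_x\|_q+\|(1-\chi_{T_x^*})f_x\|_q\le M^{1/q}\,\delta/\beta+\eps_1,
\]
whence $\|\Phi(e_{xx})(1-\chi_{Y_{x,\delta}})\|\le\|\Phi\|\bigl(M^{1/q}\delta/\beta+\eps_1\bigr)$. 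Given $\eps>0$, I would first choose $\eps_1$ small enough that $\|\Phi\|\eps_1<\eps/2$ and all the auxiliary smallness requirements above hold — this fixes $s_1,s_2,M,\mu,\beta$ — and then choose $\delta$ small enough that $\|\Phi\|M^{1/q}\delta/\beta<\eps/2$; the resulting bound is uniform in $x$. (When $p=1$ the factor $M^{1/q}$ is just $1$, one makes the routine modifications where suprema over $\ell_\infty$ need not be attained, and the $p$-ONL is still available since property~A implies it for every $p\in[1,\infty)$.)

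The step I expect to be the genuine obstacle is the middle paragraph: a priori a rank-one operator $\xi_x\otimes f_x$ of norm of order one can be spread out over an arbitrarily large set, and it is only the interplay of quasi-locality (from coarse-likeness), the $p$-ONL (from property~A), and uniform local finiteness that forces $\Phi(e_{xx})$ to be uniformly concentrated on a set of bounded cardinality — equivalently, that $\|\xi_x\|_\infty$ and $\|f_x\|_\infty$ are bounded below uniformly in $x$. Getting the order of quantifiers right — $\eps_1$ chosen before the geometric constants and $\delta$ chosen last — is what keeps the final estimate uniform over $X$.
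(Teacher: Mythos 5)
Your argument is correct, but it is a genuinely different proof from the one in the paper. The paper argues by contradiction: from a putative sequence $(x_n)_n$ with $\norm{\Phi(e_{x_nx_n})(1-\chi_{Y_{x_n,1/n}})}>\eps$ it extracts (via compactness of each $\Phi(e_{x_nx_n})$ and Lemma~\ref{L:AA} to disjointify supports) a block-diagonal operator $a=\sum_n\chi_{C_n}\Phi(e_{x_nx_n})\chi_{A_n}$ with $A_n\subseteq Y\setminus Y_{x_n,1/n}$, shows $a$ is noncompact, is a ghost (precisely because the surviving matrix entries avoid $Y_{x_n,1/n}$), and lies in $\lproe(Y)$ by coarse-likeness, contradicting Proposition~\ref{PropPropAImpliesGhostsComp}; the rank-one structure of the $\Phi(e_{xx})$ and the operator norm localization property play no role there. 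You instead give a direct, quantitative proof: you exploit the rank-one form $\Phi(e_{xx})=\xi_x\otimes f_x$ coming from Lemma~\ref{LemmaPhiStronglyContAndU}, uniform quasi-locality of $\Phi(e_{xx})$ from coarse-likeness, and the $p$-ONL consequence of property A (which the paper only invokes later, in Lemma~\ref{LemmaUdelta}) to localize both $\xi_x$ and $f_x$, up to an $O(\eps_1)$ tail, on a set $T_x^*$ of uniformly bounded cardinality, and then read off the estimate coordinatewise; your quantifier ordering ($\eps_1$ first, then the geometric constants $s_1,s_2,M,\mu,\beta$, then $\delta$) is what makes the bound uniform in $x$, and the individual steps (the lower bounds $\|\Phi(e_{xx})\|\ge1$, $\|\xi_x\|_p\ge\|\Phi^{-1}\|^{-1}$, the localization vector $v_x$ with $|f_x(v_x)|\ge\mu$, and the separation estimate giving $\|(1-\chi_{T_x^*})f_x\|_q\lesssim\eps_1$) all check out. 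The trade-off: the paper's route is shorter and needs only compactness of the $\Phi(e_{xx})$ plus the ghost characterization of property A, so it is more robust to weakening the hypotheses on $\Phi$; yours needs the hereditary range (to get rank one) and the $p$-ONL machinery, but it is constructive, yields an explicit dependence of $\delta$ on $\eps$ and on $\|\Phi\|,\|\Phi^{-1}\|$, and avoids the ghost argument entirely.
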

\begin{proof}
Assume not, and fix $\eps>0$ and a sequence $(x_n)_n\subseteq X$ such that
\[
\norm{\Phi(e_{x_nx_n})(1-\chi_{Y_{x_n,1/n}})}>\eps
\]
 for all $n\in\N$. For each $x\in X$, let $Z_x=\bigcup_{\delta>0} Y_{x,\delta}$, so $\Phi(e_{xx})\chi_{Z_x}=\Phi(e_{xx})$. Since $\Phi(e_{xx})$ is compact, we have that for every $x\in X$ there is $\delta>0$ such that $\norm{\Phi(e_{xx})(1-\chi_{Y_{x,\delta}})}<\eps/2$, therefore we can assume that all elements in $(x_n)_n$ are distinct.

By compactness of each $\Phi(e_{xx})$, there exists a sequence $(C_n)_n$ of finite subsets of $X$  such that 
\[
\norm{\chi_{C_n}\Phi(e_{x_nx_n})\chi_{C_n}(1-\chi_{Y_{x_n,1/n}})}>\eps/2
\]
for all $n\in\N$. By passing to a subsequence and thanks to Lemma~\ref{L:AA}, we can assume  that all the $C_n$'s are disjoint. For each $n\in\N$, let $A_n=C_n\cap (Y\setminus Y_{x_n,1/n})$ and let $a=\sum_n \chi_{C_n}\Phi(e_{x_nx_n})\chi_{A_n}$.  Since $C_n\cap C_m=\emptyset$ whenever $n\neq m$, $\sum_n \chi_{C_n}\Phi(e_{x_nx_n})\chi_{A_n}$ converges in the strongly operator topology, so $a$ is a well defined element of $\cB(\ell_p(Y))$. Moreover,  since $(C_n)_n$ are disjoint, $A_n\subseteq C_n$ and $\norm{\chi_{C_n}\Phi(e_{x_nx_n})\chi_{A_n}}>\eps/2$ for all $n\in\N$, it follows that $a$ is noncompact. 

Suppose   that $y,y'\in Y$ are such that $\norm{e_{yy}ae_{y'y'}}>1/n$. Since $A_n\subseteq C_n$ and the $C_n$'s are disjoint, we have that there is $i\in\N $ such that $y\in C_i$ and $y'\in A_i$. If $i>n$, then 
\[
1/i<1/n<\norm{e_{yy}ae_{y'y'}}=\norm{e_{yy}\Phi(e_{x_ix_i})e_{y'y'}},
\]
so $y'\in Y_{x_i,1/i}$, contradicting that $A_i\subseteq Y\setminus Y_{x_i,1/i}$. We have just proved that  $\norm{e_{yy}ae_{y'y'}}\leq 1/n$ for all $n\in\N$ and all  $y,y'\notin\bigcup_{m\leq n}C_m$. By finiteness of $\bigcup_{m\leq n}C_n$, $a$ is   a ghost.

Fix $\eta>0$. Since the map $\Phi$ is coarse-like, there is $s>0$ such that each $\Phi(e_{xx})$ can be $\eta$-$s$-approximated. Since cutting by characteristic functions in $\ell_\infty(Y)$ decreases the propagation, each $\chi_{C_n}\Phi(e_{x_nx_n})\chi_{A_n}$ can be $\eta$-$s$-approximated, and since the $C_n$ are disjoint, $a$ can be $\eta$-$s$-approximated. Since $\eta$ is arbitrary, $a\in\lproe(Y)$. This concludes the proof, leading to a contradiction to the space $Y$ having property $A$ (see Proposition \ref{PropPropAImpliesGhostsComp}).
\end{proof}

\begin{lemma}\label{LemmaCardinalityXBDelta}
For all $\eps>0$ there exists $\delta>0$ such that 
\[\|\Phi(\chi_A)(1-\chi_{Y_{B,\delta}})\Phi(1)\|<\eps,\]
for all finite subsets   $A,B\subseteq   X$ with $A\subset B$. In particular, if $\eps>0$, then   $|A|\leq |Y_{A,\delta}|$ for all   $A\subseteq   X$. 
\end{lemma}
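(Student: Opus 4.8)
The plan is to prove the two statements of Lemma~\ref{LemmaCardinalityXBDelta} in sequence, the second following quickly from the first.

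\textbf{Proof of the norm estimate.} Fix $\eps>0$. First I would apply Lemma~\ref{LemmaONL} with $\eps/2$ in place of $\eps$, obtaining $\delta>0$ such that $\norm{\Phi(e_{xx})(1-\chi_{Y_{x,\delta}})}<\eps/2$ for all $x\in X$. Shrinking $\delta$ if necessary is harmless since the sets $Y_{x,\delta}$ grow as $\delta$ decreases. Now let $A\subseteq B\subseteq X$ be finite. Since $A\subseteq B$ we have $Y_{A,\delta}\subseteq Y_{B,\delta}$, hence $1-\chi_{Y_{B,\delta}}\leq 1-\chi_{Y_{A,\delta}}$ as positive diagonal operators; so it suffices to bound $\norm{\Phi(\chi_A)(1-\chi_{Y_{A,\delta}})\Phi(1)}$. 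Write $\chi_A=\sum_{x\in A}e_{xx}$, so $\Phi(\chi_A)=\sum_{x\in A}\Phi(e_{xx})$. For each $x\in A$ we have $Y_{x,\delta}\subseteq Y_{A,\delta}$, hence $\Phi(e_{xx})(1-\chi_{Y_{A,\delta}})\leq$ (in norm) $\Phi(e_{xx})(1-\chi_{Y_{x,\delta}})$ — more precisely $(1-\chi_{Y_{A,\delta}})\delta_y=0$ whenever $(1-\chi_{Y_{x,\delta}})\delta_y=0$, so $\Phi(e_{xx})(1-\chi_{Y_{A,\delta}})=\Phi(e_{xx})(1-\chi_{Y_{x,\delta}})(1-\chi_{Y_{A,\delta}})$, giving $\norm{\Phi(e_{xx})(1-\chi_{Y_{A,\delta}})}<\eps/2$. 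This controls each summand but not the sum directly; the naive triangle inequality over $x\in A$ is useless since $A$ is arbitrarily large. The resolution is to use that $\Phi=U(\cdot)U^{-1}\Phi(1)$ for the isomorphism $U\colon\ell_p(X)\to\mathrm{Im}(\Phi(1))$ from Lemma~\ref{LemmaPhiStronglyContAndU}, so that $\Phi(\chi_A)=U\chi_AU^{-1}\Phi(1)$ and the rank-one pieces $\Phi(e_{xx})$ have pairwise "orthogonal" ranges and domains in the sense inherited from the idempotents $e_{xx}$. Concretely, I would mimic the argument in Lemma~\ref{LemmaONL}: suppose the estimate fails, so there are finite $A_n\subseteq B_n$ (wlog $A_n=B_n$) with $\norm{\Phi(\chi_{A_n})(1-\chi_{Y_{A_n,\delta}})\Phi(1)}>\eps$ for every $n$, witnessed by unit vectors, and pass to disjoint finite "row" sets using Lemma~\ref{L:AA}; then assemble $a=\sum_n\Phi(\chi_{A_n})(1-\chi_{Y_{A_n,\delta}})$ restricted appropriately, show $a$ is a noncompact ghost lying in $\lproe(Y)$ by the coarse-likeness of $\Phi$ (Proposition~\ref{PropCoarseLike} via Lemma~\ref{LemmaPhiStronglyContAndU}), and derive a contradiction with property A of $Y$. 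Alternatively — and this is cleaner — apply Lemma~\ref{LemmaUdelta}: if $\norm{\Phi(\chi_{A})(1-\chi_{Y_{A,\delta}})\Phi(1)}>\eps$, then for the set $B':=Y\setminus Y_{A,\delta}$ we get $D\subseteq X$ of bounded diameter with $\norm{\Phi(\chi_{A\cap D})\chi_{B'}\Phi(1)}\geq(1-\eps/2)\eps$; but $A\cap D$ is finite of bounded size (by u.l.f. of $X$), so $\Phi(\chi_{A\cap D})=\sum_{x\in A\cap D}\Phi(e_{xx})$ is a sum of at most $N:=\sup_{x}|\{x'\mid d(x,x')\le r\}|$ many terms each satisfying $\norm{\Phi(e_{xx})\chi_{B'}}\le\norm{\Phi(e_{xx})(1-\chi_{Y_{x,\delta}})}<\eps/2$, and now the triangle inequality over these $\le N$ terms gives $\norm{\Phi(\chi_{A\cap D})\chi_{B'}\Phi(1)}< N\eps/2$, a contradiction once $\delta$ was chosen small enough relative to $\eps/N$ at the outset. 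So the correct order is: first get $r$ from Lemma~\ref{LemmaUdelta} for $\gamma=\eps$, $\eps'=1/2$, extract $N=N(r)$ from u.l.f.\ of $X$, then apply Lemma~\ref{LemmaONL} with $\eps/(2N)$ to obtain $\delta$.

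\textbf{Proof of the cardinality bound.} Now fix $\eps>0$, say $\eps=\norm{\Phi}^{-1}$ for concreteness, and let $\delta>0$ be as produced above (applied with $A=B$ ranging over finite sets). Let $A\subseteq X$ be arbitrary; since the claimed inequality $|A|\le|Y_{A,\delta}|$ is about cardinalities and $Y_{A,\delta}=\bigcup_{x\in A}Y_{x,\delta}$ is monotone in $A$, it suffices to treat finite $A$ (an infinite $A$ either has $Y_{A,\delta}$ infinite — done — or, if $Y_{A,\delta}$ were finite, then by u.l.f.\ of $Y$... actually one argues directly: if $|A|=\infty$ take finite $A'\subseteq A$ of arbitrarily large size, then $|A'|\le|Y_{A',\delta}|\le|Y_{A,\delta}|$ forces $|Y_{A,\delta}|=\infty$). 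For finite $A$, suppose toward a contradiction that $|Y_{A,\delta}|<|A|$. The finite-dimensional operator $\chi_{Y_{A,\delta}}\Phi(1)\chi_{\mathrm{span}\{\delta_{f(x)}:x\in A\}}$... more directly: consider $\Phi(\chi_A)=U\chi_AU^{-1}\Phi(1)$. Its image $U[\ell_p(A)]$ is $|A|$-dimensional since $U$ is injective, while $\Phi(\chi_A)=\chi_{Y_{A,\delta}}\Phi(\chi_A)+(1-\chi_{Y_{A,\delta}})\Phi(\chi_A)\Phi(1)$ and the norm estimate gives $\norm{(1-\chi_{Y_{A,\delta}})\Phi(\chi_A)\Phi(1)}<\eps=\norm{\Phi}^{-1}$. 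Restricting to the unit sphere of $U[\ell_p(A)]$: for $\xi\in\ell_p(A)$ with $\norm{U\xi}=1$ we have $\Phi(\chi_A)U\xi=U\xi$ (since $\chi_A\xi=\xi$ and $U^{-1}\Phi(1)U\xi=\xi$ as $U\xi\in\mathrm{Im}(\Phi(1))$), so $\norm{\chi_{Y_{A,\delta}}U\xi}\geq\norm{U\xi}-\norm{(1-\chi_{Y_{A,\delta}})\Phi(\chi_A)\Phi(1)}\,\norm{U\xi}\cdot(\text{something})$ — I need to be a bit careful: write $U\xi=\Phi(\chi_A)U\xi$, then $(1-\chi_{Y_{A,\delta}})U\xi=(1-\chi_{Y_{A,\delta}})\Phi(\chi_A)\Phi(1)U\xi$, whose norm is $<\eps\norm{\Phi(1)U\xi}\le\eps\norm{\Phi}\norm{U\xi}=\norm{U\xi}$... that only gives $<1$, not enough. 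So I'd instead choose $\eps$ much smaller at the start, say $\eps=1/2$: then $\norm{(1-\chi_{Y_{A,\delta}})U\xi}<\tfrac12\norm{\Phi}\cdot$... — the clean fix is to arrange the estimate as $\norm{\Phi(\chi_A)(1-\chi_{Y_{B,\delta}})\Phi(1)}<\eps$ AND symmetrically use it after noting $\Phi(\chi_A)$ and $\Phi(1)\chi_{(\cdot)}$; rather than fuss, I'll take $\eps>0$ small enough that $\eps\norm{\Phi}\norm{\Phi^{-1}}<1$, note $\norm{U\xi}\le\norm{\Phi}\norm{\xi}$ and $\norm{\xi}\le\norm{\Phi^{-1}}\norm{U\xi}$ hold by Lemma~\ref{LemmaPhiStronglyContAndU}, so $\norm{(1-\chi_{Y_{A,\delta}})U\xi}<\eps\norm{\Phi(1)}\norm{\Phi}\norm{\xi}\le\eps\norm{\Phi}^2\norm{\Phi^{-1}}\norm{U\xi}<\norm{U\xi}$ after choosing $\eps$ small. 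Hence the restriction of $\chi_{Y_{A,\delta}}$ to the $|A|$-dimensional subspace $U[\ell_p(A)]$ is injective, forcing $\dim\chi_{Y_{A,\delta}}\ell_p(Y)\geq|A|$, i.e.\ $|Y_{A,\delta}|\ge|A|$ — contradiction.

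\textbf{Main obstacle.} The genuine difficulty is the first part: passing from the per-point bound $\norm{\Phi(e_{xx})(1-\chi_{Y_{x,\delta}})}<\eps$ (Lemma~\ref{LemmaONL}) to a uniform bound on $\norm{\Phi(\chi_A)(1-\chi_{Y_{A,\delta}})\Phi(1)}$ for arbitrarily large finite $A$, since the number of summands in $\Phi(\chi_A)=\sum_{x\in A}\Phi(e_{xx})$ is unbounded and these rank-one operators are not orthogonal in any Hilbertian sense. The key idea that unlocks this is Lemma~\ref{LemmaUdelta} (the operator-norm localization consequence of property~A): it lets one replace the arbitrarily large $A$ by a bounded-diameter piece $A\cap D$ at the cost of only a factor $(1-\eps)$, and bounded diameter plus u.l.f.\ of $X$ caps the number of summands by a constant $N$ depending only on the diameter, making the triangle inequality effective. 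Everything else — the ghost/coarse-like machinery, the isomorphism structure of $\Phi$, the dimension-counting in the cardinality step — is routine given the lemmas already established, and the second statement is essentially immediate from the first by linear algebra once the constants are tracked.
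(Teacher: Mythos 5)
Your argument for the norm estimate is essentially the paper's: the key combination is Lemma~\ref{LemmaUdelta} (localization to a piece $A\cap D$ with $\diam(D)<r$, where $r$ depends only on $\gamma$ and the localization error), the u.l.f.\ bound $|A\cap D|\leq N(r)$, and Lemma~\ref{LemmaONL} applied with a parameter of order $\eps/N$. Your direct quantifier arrangement (fix $r$, then $N$, then $\delta$) is legitimate precisely because the $r$ of Lemma~\ref{LemmaUdelta} does not depend on $A$, $B$ or $\delta$, and it lets you bypass the paper's contradiction set-up with $\delta=1/n$ and the disjointification steps via Lemma~\ref{L:AA}; apart from a harmless factor $\norm{\Phi(1)}$ in the termwise bound (which the paper also suppresses, and which your remark about choosing $\delta$ small relative to $\eps/N$ absorbs), this part is sound.

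For the cardinality statement your idea — injectivity of $\chi_{Y_{A,\delta}}$ on the $|A|$-dimensional space $\mathrm{Im}(\Phi(\chi_A))=U[\ell_p(A)]$ — is exactly the paper's argument (there phrased as picking a unit vector in $\mathrm{Im}(1-\chi_{Y_{A,\delta}})\cap\mathrm{Im}(\Phi(\chi_A))$), but your displayed estimate invokes a bound on $(1-\chi_{Y_{A,\delta}})\Phi(\chi_A)\Phi(1)$, i.e.\ with the cut-off on the wrong side of $\Phi(\chi_A)$: the first part of the lemma only controls $\Phi(\chi_A)(1-\chi_{Y_{A,\delta}})\Phi(1)$, and this misordering is what forced you into the $\eps\norm{\Phi}^2\norm{\Phi^{-1}}<1$ contortions. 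The clean repair is the kernel computation: if $\xi\in\ell_p(A)$ and $\chi_{Y_{A,\delta}}U\xi=0$, then $\Phi(1)U\xi=U\xi$, $(1-\chi_{Y_{A,\delta}})U\xi=U\xi$ and $\Phi(\chi_A)U\xi=U\xi$, so $U\xi=\Phi(\chi_A)(1-\chi_{Y_{A,\delta}})\Phi(1)U\xi$ and hence $\norm{U\xi}<\eps\norm{U\xi}$, giving $\xi=0$ whenever $\eps\leq 1$; no condition involving $\norm{\Phi^{-1}}$ is needed. With that one-line fix your proposal matches the paper's proof in all essentials.
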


\begin{proof}
Suppose not. Then there exists $\eps\in (0, 1)$ and  sequences $(A_n)_n$ and $(B_n)_n$ of finite subsets of $X$ such that $A_n\subset B_n$ and \[\|\Phi(\chi_{A_n})(1-\chi_{Y_{B_n,1/n}})\Phi(1)\|\geq \eps,\] for all $n\in\N$. Proceeding exactly as in \cite[Claim 6.9]{BragaFarahVignati2019}, we can assume that $(A_n)_n$ is a sequence of disjoint subsets.

 Let $W_n=Y\setminus Y_{B_n,\delta}$. Since $(A_n)_n$ is a disjoint sequence of finite subsets and $\|\Phi(\chi_{A_n})\chi_{W_n}\Phi(1)\|>\eps$ for all $n$, Lemma~\ref{L:AA} allows us to pick a sequence $(Y_n)_n$ of disjoint finite subsets of $Y$ such that $\|\Phi(\chi_{A_n})\chi_{W_n\cap	 Y_n}\Phi(1)\|>\eps/2$ for all $n\in\N$.  Therefore,  Lemma~\ref{LemmaUdelta} implies that there exists $r>0$ and a sequence $(D_n)_n$ of subsets of $X$ such that  $\diam(D_n)<r$ and 
\begin{equation}\label{EqONL1}
\|\Phi(\chi_{A_n\cap D_n})\chi_{W_n\cap Y_n}\Phi(1)\|\geq (1-\eps)\|\Phi(\chi_{A_n})\chi_{W_n\cap Y_n}\Phi(1)\|,\tag{$*$}
\end{equation}
for all $n\in\N$.

Since $X$ is u.l.f. and $\sup_n\diam (D_n)\leq r$, there exists $N\in\N$ so that $\sup_n|D_n|<N$. Pick $\theta>0$ small enough so that $2N\theta <\eps(1-\eps) $.
By Lemma \ref{LemmaONL}, pick $n\in \N$ large enough so that $\|\Phi(e_{xx})(1-\chi_{Y_{x,1/n}})\|<\theta$ for all $x\in X$. Then
\begin{align*}\label{EqONL3}
\|\Phi(\chi_{A_n\cap D_n })\chi_{W_n\cap Y_n}\Phi(1)\|&\leq\|\Phi(\chi_{A_n\cap D_n })\chi_{W_n}\|\tag{$**$}\\ 
&\leq \sum_{x\in A_n\cap D_n}\|\Phi(e_{xx})(1-\chi_{Y_{x,1/n}})\|\\
&\leq \theta|D_n|\\
&\leq \frac{\eps(1-\eps)}{2},
\end{align*}
for all $n\in\N$. Therefore, inequalities \eqref{EqONL1} and \eqref{EqONL3} imply that
  \[\|\Phi(\chi_{A_n})\chi_{B_n\cap Y_n}\Phi(1)\|\leq \frac{\eps}{2}\] for all $n\in\N$; contradiction.

We are left to show that, if $\eps>0$ then our choice of $\delta$ implies that $|A|\leq |Y_{A,\delta}|$ for all $A\subseteq  X$. Fix $\eps>0$ and let $\delta>0$  be given by the first statement of the lemma. Notice that $|A|=\rank \Phi(\chi_A)$ and $|Y_{A,\delta}|=\rank( \chi_{Y_{A,\delta}})$. Suppose $\rank( \Phi(\chi_A))>\rank( \chi_{Y_{A,\delta}})$.  Let $H=U(\ell_p(X))$, so $\Phi(1)$ is the projection onto $H$.  Since $\mathrm{corank}(1-\chi_{Y_{A,\delta}})=\rank \chi_{Y_{A,\delta}} $, we can pick
\[\xi\in \mathrm{Im}(1-\chi_{Y_{A,\delta}})\cap\mathrm{Im}(\Phi(\chi_A))\]
with norm 1. Since $\xi\in \mathrm{Im}(\Phi(\chi_A))\subseteq H$, this gives us that   $\|\Phi(\chi_A)( 1-\chi_{Y_{A,\delta}})\Phi(1)\xi\|=\|\xi\|=1$; contradiction.
\end{proof}

The next result follows from Lemma \ref{LemmaCardinalityXBDelta} and Hall's marriage theorem. Since the proof is completely analogous to the proof of \cite[Lemma 6.10]{BragaFarahVignati2019}, we omit it.

\begin{lemma}\label{LemmaExistenceInjection}
There exists $\delta>0$ and  an injection  $f\colon X\to Y$ such that  $f(y)\in Y_{x,\delta}$ for all $x\in X$. \qed
\end{lemma}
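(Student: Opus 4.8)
The goal is to extract from Lemma~\ref{LemmaCardinalityXBDelta} an injection $f\colon X\to Y$ with $f(x)\in Y_{x,\delta}$ for all $x$, and the natural tool is Hall's marriage theorem. First I would fix $\eps\in(0,1)$ (any fixed value works, say $\eps=1/2$) and let $\delta>0$ be the constant provided by Lemma~\ref{LemmaCardinalityXBDelta}, so that $|A|\le |Y_{A,\delta}|$ for every finite $A\subseteq X$. Consider the bipartite graph $G$ on vertex classes $X$ and $Y$ in which $x\in X$ is joined to $y\in Y$ exactly when $y\in Y_{x,\delta}$; equivalently, the neighbourhood of $x$ is $N(x)=Y_{x,\delta}$, and for finite $A\subseteq X$ the neighbourhood of $A$ is $N(A)=\bigcup_{x\in A}Y_{x,\delta}=Y_{A,\delta}$. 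Then the inequality $|A|\le |Y_{A,\delta}|$ is precisely Hall's condition $|A|\le |N(A)|$ for all finite $A$.

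The second step is to apply Hall's theorem in the infinite setting. For a locally finite bipartite graph (each $x\in X$ has $N(x)=Y_{x,\delta}$ \emph{finite}, which holds here because $Y$ is u.l.f.\ and $Y_{x,\delta}$ is contained in a ball of bounded radius around $\supp(\Phi(e_{xx}))$ — in fact $\Phi(e_{xx})$ being compact and coarse-like forces $Y_{x,\delta}$ to be finite of uniformly bounded size), the marriage condition for all \emph{finite} subsets of $X$ suffices to produce a matching saturating $X$, by the standard compactness/K\"onig argument (or by Rado's selection lemma, or a direct Zorn argument on partial matchings). This matching is exactly an injection $f\colon X\to Y$ with $f(x)\in N(x)=Y_{x,\delta}$ for every $x\in X$, which is the assertion of the lemma (up to the cosmetic relabelling of the roles of $x$ and the matched point in the statement). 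Since the excerpt explicitly says the proof is ``completely analogous to the proof of \cite[Lemma 6.10]{BragaFarahVignati2019}'', one would in practice simply cite that argument; but the above is the content.

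\textbf{Main obstacle.} The only genuine point requiring care is verifying the hypotheses of the infinite Hall theorem, namely that each $Y_{x,\delta}$ is finite (local finiteness of the graph on the $X$-side), since Hall's theorem for infinite graphs can fail without such a hypothesis. This follows because $\Phi(e_{xx})\in M^p_\infty(Y)$ is compact, so $\chi_{Y_{x,\delta}}$ cuts down to a finite-rank corner for $\delta>0$; more precisely, if $\|e_{yy}\Phi(e_{xx})e_{zz}\|\ge\delta$ or $\|e_{zz}\Phi(e_{xx})e_{yy}\|\ge\delta$ then $y$ lies in the support of the finite-rank operator $\Phi(e_{xx})$ up to an $\eps$-error, and coarse-likeness (Proposition~\ref{PropCoarseLike}, via Lemma~\ref{LemmaPhiStronglyContAndU}) bounds $\partial(y,z)$, so $Y_{x,\delta}$ is finite. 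Everything else — translating ``Hall's condition'' into the cardinality inequality of Lemma~\ref{LemmaCardinalityXBDelta}, and invoking the compactness form of the marriage theorem — is routine, so the proof is short and is legitimately omitted with a reference.
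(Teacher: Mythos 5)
Your proposal is correct and follows exactly the route the paper intends: the cardinality inequality of Lemma~\ref{LemmaCardinalityXBDelta} is read as Hall's condition for the bipartite graph with neighbourhoods $N(x)=Y_{x,\delta}$, and the infinite (locally finite) form of Hall's marriage theorem yields the injection, which is precisely the argument of \cite[Lemma 6.10]{BragaFarahVignati2019} that the paper cites and omits. Your added verification that each $Y_{x,\delta}$ is finite (via $\Phi(e_{xx})\in M^p_\infty(Y)$, compact preservation from Lemma~\ref{LemmaPhiStronglyContAndU}) is the right point to check for the infinite Hall theorem, so there is no gap.
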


\begin{proof}[Proof of Theorem \ref{ThmEmbWithPropA}]
\eqref{Item1ThmEmbWithPropA} $\Rightarrow$ \eqref{Item2ThmEmbWithPropA} Let $f\colon X\to Y$ be an injective coarse embedding and define a map $V\colon \ell_p(X)\to \ell_p(Y)$ by letting $V\delta_x=\delta_{f(x)}$. Let $P=\chi_{\mathrm{Im}(f)}$, so $V^{-1}\colon \ell_p(\mathrm{Im}(f))\to \ell_p(X)$ is well defined. One can easily check that the map $a\in \lproe(X)\mapsto VaV^{-1}P\in \lproe(Y)$ is then an isometric embedding (cf. \cite[Theorem 8.1]{BragaFarah2018} and \cite[Theorem 1.4]{BragaFarahVignati2019}).

Since \eqref{Item2ThmEmbWithPropA} $\Rightarrow$ \eqref{Item3ThmEmbWithPropA} is straightforward, we are left to notice that \eqref{Item3ThmEmbWithPropA} $\Rightarrow$ \eqref{Item1ThmEmbWithPropA}. Let $\delta>0$   and $f\colon X\to Y$ be given by Lemma \ref{LemmaExistenceInjection}, so $f$ is injective. Let
\[X_1=\{x\in X\mid \exists z\in Y,\ \|e_{zz}\Phi(e_{xx})e_{f(x)f(x)}\|\geq \delta\}\]
and 
\[X_2=\{x\in X\mid \exists z\in Y,\ \|e_{f(x)f(x)}\Phi(e_{xx})e_{zz}\|\geq \delta\}\cap X_1^\complement.\]
Since $f(x)\in Y_{x,\delta}$ for all $x\in X$, we have that $X=X_1\sqcup X_2$. Let $g,h\colon X\to Y$ be maps so that $g\restriction X_1=f$, $h\restriction X_2=f$, $ \|e_{f(x)f(x)}\Phi(e_{xx})e_{g(x)g(x)}\|\geq \delta$ for all $x\in X_2$, and $ \|e_{h(x)h(x)}\Phi(e_{xx})e_{f(x)f(x)}\|\geq \delta$ for all $x\in X_1$. So,  
$ \|e_{h(x)h(x)}\Phi(e_{xx})e_{g(x)g(x)}\|\geq \delta$ for all $x\in X$, proceeding as in the proof of Theorem \ref{ThmEmbHereditarySubAlg}, we have  that $g$ (and $h$) is a coarse embedding. Since $\Phi$ is coarse-like, $f$ is close to $g$ (cf. Lemma \ref{LemmaCloseAndUnfFiniteToOne}), which gives that $f$ is a coarse embedding.
 \end{proof}

\begin{proof}[Proof of Theorem \ref{ThmIsomWithPropA}]
\eqref{Item1ThmIsomWithPropA} $\Rightarrow$ \eqref{Item2ThmIsomWithPropA} follows just as in the proof of Theorem \ref{ThmEmbWithPropA}, and  \eqref{Item2ThmEmbWithPropA} $\Rightarrow$ \eqref{Item3ThmEmbWithPropA} is straightforward.

\eqref{Item3ThmEmbWithPropA} $\Rightarrow$ \eqref{Item1ThmEmbWithPropA} 
Let $\delta>1$ and $f\colon X\to Y$ be  given by Lemma \ref{LemmaExistenceInjection}, so $f$ is injective. By symmetry, and replacing  $\delta$ by a smaller positive number if necessary,  Lemma \ref{LemmaExistenceInjection} give us a map $g\in Y\to X$ so that for all $y\in Y$, there exists $x\in X$ with 
 \[\max\{\|e_{xx}\Phi^{-1}(e_{yy})e_{g(y)g(y)}\|,\|e_{g(y)g(y)}\Phi^{-1}(e_{yy})e_{xx}\|\}\geq \delta.\]
Proceeding as in the proof of Theorem \ref{ThmEmbWithPropA}, we have that both $f$ and $g$ are coarse embeddings.
 
 K\'{o}nig's proof of the Cantor-Scr\"{o}der-Bernstein
theorem gives us a bijection $h\colon X\to Y$ such that for each $x\in X$, either
$h(x)=f(x)$ or $x\in \mathrm{Im}(g)$ and $h(x)=g^{-1}(x)$. A simple application of Lemma \ref{LemmaTheMapsAreCoarse}, Lemma \ref{LemmaTheMapsAreExpanding} and Lemma \ref{LemmaCloseAndUnfFiniteToOne} give us that $h$ is coarse and expanding, so we are done.\end{proof}

\begin{proof}[Proof of Corollary \ref{CorIsoAsCstarIFFasBanAlg}]
Suppose $\cstu(X)$ and $\cstu(Y)$ are isomorphic as Banach algebras. Since $Y$ has property A, so does $X$. Indeed, this follows since  property $A$ is equivalent to amenability of $\cstu(X)$ as a Banach algebra\footnote{By \cite[Theorem 5.3]{SkandalisTuYu2002} $X$ has property A if and only if $\cstu(X)$ is nuclear, and nuclearity is equivalent to amenability for \cstar-algebras.}, and Banach algebra amenability passes through isomorphisms. The result now  follows as the proof of Theorem \ref{ThmIsomWithPropA}. Indeed,  if $f\colon X\to Y$ is a bijective coarse equivalence, then $a\in \lproe(X)\mapsto VaV^{-1}\in \lproe(Y)$ is a \cstar-algebra isomorphism, where $V\colon \ell_p(X)\to \ell_p(Y)$ is given by  $V\delta_x=\delta_{f(x)}$ for all $x\in X$. 
\end{proof}

\end{document}